\newtheorem{lemma}{Lemma}
\newtheorem{theorem}{Theorem}
\newtheorem{proposition}{Proposition}
\newtheorem{definition}{Definition}
\newtheorem{corollary}{Corollary}
\newtheorem{example}{Example}
\newtheorem{remark}{Remark}
\pgfplotsset{compat=1.15}
\title{Improved Bounds For Some Asymptotic Formulas for Counting Words in Shift Spaces}
\author{Hamid Naderiyan\\  University of California - Riverside \\
\href{mailto:hnaderiy@ucr.edu}{hnaderiy@ucr.edu}}
\begin{document}

\maketitle

\begin{abstract}
    This paper studies a version of the counting problem in dynamical systems that is of interest, especially in conformal dynamical systems where the functions of the systems are angle preserving. Recently, M. Pollicott and M. Urba\'{n}ski published a result in this context for D-generic systems where the complex transfer operator behaves nicely on the critical line of the Poincar\'{e} series. Their result contains an asymptotic formula for the Apollonian circle packing. We lift the D-generic condition and conformality of the functions system in this paper to see how their asymptotic formula changes. We use some recent Tauberian theorem to show that the formula gets a form whose limit infimum and limit supremum bounds can be obtained in the sharpest sense. Further, we observed an asymptotic of length closely related to this counting problem. In fact, not only the number of words is subject to some formula, but also their length as well.\\
\end{abstract}

\tableofcontents
\begin{large}

\section{Introduction}
The counting problem in math has a long history dating as far back as the Gauss circle problem. Gauss tried to obtain an asymptotic formula for the number of points in the plane with integer coordinates inside a circle of radius $T$ as $T$ grows. Later on, Sierpinski, Walfisz, Iwaniec \& Mozzochi \cite{Iwaniec}, Huxley \cite{Huxley}, Hardy \cite[~ p. 372]{Ivic}, Landau and Hafner \cite{Hafner} contributed to problems closely related to the Gauss circle problem to obtain better estimates. \\
The analogous problem in the context of hyperbolic spaces as well gained a lot of attention starting in 1942 with (unnoticed) work of Delsarte, where he considered the hyperbolic plane $\mathbb{H}^2$ and instead of $\mathbb{Z}^2$ he considered orbit of a point $z \in \mathbb{H}^2$ under the action of a Fuchsian group $G \subseteq \text{PSL}(2,\mathbb{R})$. He obtained an asymptotic formula for the number of $g \in G$ that moves $z$ at most by $T$ as $T$ grows. Here the distance is measured by a hyperbolic metric of constant negative curvature \cite{Delsarte}. Independently, Huber published his result on this problem in 1956. His approach uses spectral decomposition of the Laplacian operator, where $G$ doesn't contain parabolic elements because he assumes the fundamental domain is compact \cite{HUBER1959}. In the same year, Selberg extended this decomposition for the case $G$ contains parabolic elements where the fundamental domain has a finite area. He used the celebrated trace formula for this \cite[~ p. 77]{Selberg1}. This helped Patterson to approach the problem in generality providing some error terms as well \cite{Patterson}. Along with these works, Margulis answered a similar question in higher dimensional hyperbolic space in 1969 \cite[~ p. 48]{Margulis}. Several others have contributed to this problem in different contexts including Sarnak \cite{sarnak}, Lax \& Phillips \cite{Lax}, Parry \& Pollicott \cite{PaPo}, Lalley \cite{lalley}, Mirzakhani \cite{Mirzakhani} and etc.\\
Recently, Pollicott \& Urba\'{n}ski jointly obtained an asymptotic formula in the context of conformal dynamical systems, see corollary \ref{llll} or \cite[~ p. 39]{UP}. For this, it is enough to have graph directed Markov system in which our functions in the system are contractions and satisfy certain properties. The most important one is the conformal property which is angle preserving orientation preserving or orientation reversing. They use the infinite theory of graph directed Markov system developed by Mauldin \& Urba\'{n}ski \cite{MU} and complex transfer operator developed by Pollicott \cite{P-RPF} to obtain an asymptotic formula for counting finite words in the shift space for which the corresponding composition function of the system has derivative at least $e^{-T}$ as $T$ grows. Further, they introduce a slightly different system in which finitely many parabolic elements are allowed and they apply the aforementioned asymptotic formula for this system. These two kinds of conformal systems have many applications one of which is an asymptotic formula for the planer Apollonian circle packing problem. The circle packing was studied in the 1970s by Boyd \cite{Boyd1} and estimates of the number of circles of radius at $1/T$ were obtained by him in the 1980s \cite{Boyd2}. This estimate had major improvement due to Kontorovich \& Oh in 2011 \cite{Kontorovich} and Oh \& Shah in 2012 \cite{Shah}. The former article focuses on two cases: (a) the number of circles of radius at least $1/T$ inside the biggest circle tangent to the three circles that generate the gasket, and (b) the number of circles of radius at least $1/T$ between two parallel lines generating the gasket up to a period of the gasket. One year later, the latter article obtains a similar formula for case (c) number of circles of radius at least $1/T$ bounded in a curvilinear triangle whose sides are parts of three circles tangent to each other. The method for Kontorovich-Oh-Shah is equidistribution of expanding closed horospheres on hyperbolic 3-manifolds $G \symbol{92} \mathbb{H}^3$ where $G$ is a geometrically finite torsion-free discrete subgroup of $\text{PSL}(2,\mathbb{C})$. Further, they use Patterson-Sullivan theory of conformal density (measure) in which the Laplacian operator has simple isolated eigenvalue $-\delta_{G}(2-\delta_{G})$ where $\delta_{G}$ is the Hausdorff dimension of the limit set under the assumption $\delta_{G}>1$ \cite[~ p. 195]{Sullivan}, \cite[~ p. 272]{Pattersonn}.\\
In Pollicott \& Urba\'{n}ski's work the spectral theory is analyzed for the transfer operator instead, where they assume their system has D-generic property which prevents the situation that the transfer operator admitting $1$ as the spectral value on the critical line $\text{Re}(s)=\delta$ of the Poincar\'{e} series except at the exponent itself $s=\delta$. The other condition they impose on the system is strong regularity which can be perceived to be analogous to the assumption $\delta_{G}>1$  mentioned above.\\
In this paper, we relax the D-generic assumption to see how Pollicott \& Urba\'{n}ski's result changes, see theorem \ref{main thm}. We noticed that in this situation, we no longer obtain only one asymptotic formula. We may obtain continuum many relations. More precisely we can see that the ratio can converge to a full range of a closed interval rather than just a point in Pollicott \& Urba\'{n}ski's result, see example \ref{sharpness}. However, we can obtain a lower bound for the infimum and an upper bound for the supremum. These bounds are shown to be sharp by an example, see example \ref{sharpness}. We should mention that we only assume we are given a real-valued summable H\"{o}lder-type function on the shift space. We don't assume necessarily the function is induced by a conformal system. The main result (theorem \ref{main thm}) involves spectral analysis of the transfer operator which we adapt from Pollicott \& Urba\'{n}ski and a Tauberian theorem \ref{Garahm-Vaaler} due to Graham \& Vaaler. Further, we investigate an asymptotic of the length for which the counting function is related to. Given $T>0$ the maximum length contributing to the counting function is itself subject to an asymptotic formula, see proposition \ref{asymp length}. \\
We use a similar approach to that of Pollicott \& Urba\'{n}ski but we include much more details. We mention proof of important facts used by Pollicott \& Urba\'{n}ski in section \ref{3}. Not only, do we bring the proofs but also we adapt it for our own setting lifting conformality. As well, we reprove some of the inequalities in section \ref{5} concerning counting finite words just for the purpose of giving a clearer and shorter proof. Furthermore, we include many preliminaries that some experts can skip over. One major reason we decided to write this detailed paper is mostly for the purpose of having a self-contained article. The other reason is for the paper following this paper which targets the counting problem in the context of random dynamical systems. This way we can simply refer to any desired (deterministic) preliminaries here.\\
About the structure  of the article, we start with some preliminaries from dynamical systems over symbolic space like pressure, Gibbs, and equilibrium state. This further includes topics like real or complex-valued summable functions, H\"{o}lder functions. Moreover, we define properties like strong regularity and D-generic property which are the main assumptions in Pollicott \& Urba\'{n}ski's formula. We introduce the transfer operator and talk a bit about the perturbation theory of analytic operators. This requires the concept of essential spectrum. Later, we bring notions of graph directed Markov systems and conformal graph directed Markov systems. Finally, we briefly mention two Tauberian theorems before we finish the section with some examples.\\
The next section is devoted to applying the perturbation theory to the transfer operator to obtain a spectral representation of the transfer operator over its maximal eigenvalues. We show first eigenvalues are simple beforehand though. This requires introducing a weighted operator involving the transfer operator.\\
The fourth section is talking about the relation between a complex function (Poincar\'{e} series), and some counting functions. The idea is by taking the Riemmann-Stieltjes integral against our target counting function, we acquire a Poincar\'{e} series. Further, we bring some estimates to find some upper and lower bounds for counting periodic words in terms of our ordinary counting function.\\
In the fifth section, we use the spectral representation from the third section to argue how Graham \& Vaaler's Tauberian theorem is applicable to imply the main theorem. We use this theorem to obtain Pollicott \& Urba\'{n}ski's formula as a corollary. \\
The sixth section investigates an asymptotic formula for the length of the words that contribute to the counting function. We provide two asymptotic formulas for length and we propose a conjecture about the growth of the counting function relative to another counting function with some specified length. We finish with three examples to see how our estimates of bounds are sharp in the last section.

\section{Preliminaries}
We would like to mention that throughout this paper we try to stay loyal to the following conventions:
\begin{itemize}
    \item $\omega, \tau, \gamma$ : finite words
    \item $\rho, \rho'$ : infinite words
    \item $T$ : positive real
    \item $s=x+iy$ : complex number
    \item $\sigma$ : shift map
    \item $m,\mu$ : measures
    \item $f,g,h$ : real or complex functions on $E_A^{\mathbb{N}}$
    \item $\mathbbm{1}_B$ : indicator function of set $B$
    \item $C^{0,\alpha}$ : space of H\"{o}lder functions of exponent $\alpha$
    \item $K, Q,c,c_1,C_1,c_{\delta}$ : constants
    \item $\mathcal{L}, \mathcal{P}, \mathcal{Q}, \mathcal{D}, \mathcal{F}, \mathcal{E}$ :  operators
    \item $\lambda$ : eigenvalue
    \item $\Gamma^+$ : some right half-plane
    \item $\eta_{\rho}(B,s)$ : complex function in $s$
    \item $N_{\rho}(B,T)$ counting function in $T$
\end{itemize}
Let $E$ be a countable (finite or infinite) set calling each of its elements a symbol, a letter or an alphabet. By $E^{\mathbb{N}}$ we mean the set of all infinite sequences of the form 
$$e_1e_2e_3...e_n...$$
where each $e_i$ belongs to $E$. We usually represent the first $n$ symbols of such a sequence, also called (finite) \textbf{word} or block, by $\omega$ throughout this work, i.e. 
$$\omega=e_1e_2...e_n$$
where we sometimes tend to identify $\omega_i$ with $e_i$ and just have 
$$\omega=\omega_1\omega_2...\omega_n.$$
When we write $|\omega|=n$ we just mean the word $\omega$ has $n$ letters. By $E^n$ we represent all the words of length $n$ and by $E^*$ we represent $\cup_{n=1}^{\infty} E^n$. As well we use the notation $|.\wedge .|$ to represent the number of common initial symbols in two sequences, i.e. for $\rho=e_1e_2...$ and $\rho'=e'_1e'_2...$ we have
$$|\rho \wedge \rho '|=m \Leftrightarrow e_1=e'_1,\; e_2=e'_2, \; ..., \; e_m=e'_m, \; e_{m+1}\neq e'_{m+1}.$$
One can as well introduce a metric by
$$d(\rho, \rho')=e^{- |\rho \wedge \rho '|}.$$
Further, we set
$$d_{\alpha}=d^{\alpha}, \; \; 0<\alpha<1,$$
i.e. we have
$$d_{\alpha}(\rho, \rho')=e^{-\alpha |\rho \wedge \rho '|}.$$
Therefore we equip $E^{\mathbb{N}}$ with a metric space, which is called \textbf{symbolic space}. Note that the topology on $E^{\mathbb{N}}$ induced by this metric is the same as the Tychonoff topology where each $E$ is equipped with ordinary discrete topology. This means for any $\alpha$ and $\beta$ the topologies of $d_{\alpha}$ and $d_{\beta}$ are the same, however, the metrics are not equivalent for different $\alpha$ and $\beta$.\\
One can then see that the shift map $\sigma:E^{\mathbb{N}} \rightarrow E^{\mathbb{N}}$ given by
$$\sigma(e_1e_2...)=e_2e_3...$$
is a continuous map. \\
Further, we want to restrict ourselves to sequences where certain words are not appearing. We first introduce a map $A: E\times E \rightarrow \{0,1\}$ (sometimes called incidence or transition matrix). We use $A_{ee'}$ notation instead of $A(e,e')$.\\
A \textbf{subshift of finite type} consists of the sequences $e_1e_2e_3...$ in $E^{\mathbb{N}}$ such that
$$A_{e_1e_2}=1, \; A_{e_2e_3}=1, \; ..., \; A_{e_ne_{n+1}}=1, \; ...$$
Of course, if $A$ only assumes value $1$, represented by $A=1$, then this is just the space introduced earlier, that is why we sometimes call $(E^{\mathbb{N}} , \sigma)$ full shift space.\\
Further, when $A_{e_1e_2}=1$ we say $e_1e_2$ is \textbf{A-admissible} or just \textbf{admissible}. As well, by $E^*_A$ we mean all admissible finite words of all lengths, by $E^*_{\rho}$ we mean all $\omega \in E^*_A$ such that $\omega \rho$ is an admissible sequence, by $E^n_{\rho}$ we mean all $\omega \in E^n_A$ such that $\omega \rho$ is an admissible sequence, by $E^*_{\text{per}}$ we mean all $\omega \in E^*_A$ such that $\omega_n \omega_1$ is admissible and we say $\omega$ is \textbf{periodic word}, by $\bar{\omega}$ we mean the sequence $\omega \omega\omega ...$ and by $E^n_A$ we mean all admissible words of length $n$. Finally, for each finite word $\omega$ of length $n$ we define the \textbf{cylinder} 
$$[\omega]:=\{\rho \in E_A^{\mathbb{N}} : \; \rho_1...\rho_n=\omega  \}.$$
\begin{proposition}\label{polish}
For the subshift of finite type $E_A^{\mathbb{N}}$ the followings hold:
\begin{enumerate}
    \item[a.] All the cylinders form a countable clopen basis.
    \item[b.] Every open set can be written as a countable union of mutually disjoint cylinders.
    \item[c.] It is a Polish space.
\end{enumerate}
\end{proposition}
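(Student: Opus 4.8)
The plan is to verify the three assertions in sequence, each building on the previous one, and to treat (a) as the substantive step from which (b) and (c) follow more or less formally.

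First I would establish (a). Recall that a basic open set in the Tychonoff topology on $E^{\mathbb{N}}$ (with $E$ discrete) is a finite-coordinate cylinder, and since the metric $d_\alpha$ induces exactly this topology, the cylinders $[\omega]$ with $\omega \in E_A^*$ form a basis: indeed $[\omega] = \{\rho : d_\alpha(\rho,\omega\cdot) \le e^{-\alpha|\omega|}\}$ relative to $E_A^{\mathbb{N}}$, so each $[\omega]$ is open, and conversely the ball of radius $e^{-\alpha n}$ around $\rho$ is precisely $[\rho_1\cdots\rho_{n}]$, so balls are cylinders. There are countably many cylinders because $E_A^* \subseteq E^* = \bigcup_n E^n$ is a countable union of (at most) countable sets. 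For clopenness: $[\omega]$ is open as just noted, and its complement in $E_A^{\mathbb{N}}$ is $\bigcup\{[\tau] : \tau \in E_A^{|\omega|},\ \tau \neq \omega\}$, a union of open cylinders, hence open; so $[\omega]$ is closed as well.

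For (b), let $U \subseteq E_A^{\mathbb{N}}$ be open. Since the cylinders form a basis, $U$ is a union of cylinders; I would make this union disjoint by a standard pruning argument. Enumerate the cylinders contained in $U$ by increasing length, $[\omega^{(1)}], [\omega^{(2)}], \dots$, and keep $[\omega^{(k)}]$ only if it is not contained in any previously kept cylinder. Because any two cylinders are either disjoint or one contains the other (comparing them coordinatewise up to the shorter length), the kept cylinders are pairwise disjoint; and every point of $U$ lies in some basic cylinder inside $U$, hence in some kept one (the shortest cylinder through that point inside $U$, or a cylinder containing it). This writes $U$ as a countable disjoint union of cylinders.

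For (c), I must exhibit a complete separable metric. Separability is immediate from (a): picking one point $\overline{\omega}$ (or any fixed extension) in each nonempty cylinder $[\omega]$ gives a countable dense set, since every nonempty open set contains a cylinder. For completeness, take a Cauchy sequence $(\rho^{(k)})$ in $d_\alpha$; Cauchyness means that for each $n$ there is $K_n$ with $\rho^{(k)}_1\cdots\rho^{(k)}_n$ independent of $k \ge K_n$, so the coordinatewise limit $\rho$ is well defined, lies in $E_A^{\mathbb{N}}$ (admissibility of each initial block $A$-admissibility is a finite condition inherited from the tail of the sequence), and $\rho^{(k)} \to \rho$. The main (and only mild) obstacle is bookkeeping in (b)—making the disjointification precise—together with the one genuine point in (c), namely checking that the limit sequence remains $A$-admissible; both are routine once the basis structure in (a) is in hand. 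If $E$ is finite one could alternatively invoke compactness, but the argument above covers the countable case uniformly.
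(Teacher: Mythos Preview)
Your proof is correct. Parts (a) and (b) follow essentially the same route as the paper: the paper also identifies cylinders with metric balls, uses the nesting property of cylinders (two cylinders are either disjoint or nested) to get the disjoint decomposition in (b), and argues closedness of $[\omega]$ directly (the paper uses a sequential argument rather than your complement-as-union-of-cylinders argument, but these are interchangeable).

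Part (c) is where you diverge. The paper simply invokes the general fact that a countable product of separable spaces is separable and a countable product of complete metrizable spaces is complete metrizable, implicitly using that $E_A^{\mathbb{N}}$ sits as a closed subspace of the full product $E^{\mathbb{N}}$. You instead give a hands-on argument: a countable dense set from one representative per cylinder, and completeness via coordinatewise limits of Cauchy sequences together with the observation that $A$-admissibility is a finite (hence closed) condition. Your approach is more self-contained and makes explicit the one point the paper's citation leaves tacit, namely that the limit of admissible sequences is admissible; the paper's approach is shorter but leans on a standard topology result. Both are entirely adequate here.
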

\begin{proof}
(a). It is clear that for each positive integer $n$, we have countably many finite words of length $n$, therefore there are only countably many cylinders. Next, we show each cylinder is a neighborhood in $E_A^{\mathbb{N}}$. Let $\omega$ be a finite word of length $n$, choose any fixed $\rho \in [\omega]$, we show $[\omega]=N(\rho,e^{-\alpha(n-1)})$. Note that $\rho'$ is in $[\omega]$ iff $d_{\alpha}(\rho,\rho')<e^{-\alpha(n-1)}$ iff $|\rho\wedge\rho'|>n-1$ iff $|\rho\wedge\rho'|\geq n$ iff $\rho' \in [\omega]$. To see $[\omega]$ is closed, consider a sequence $\{\rho_{(i)}\}_i$ in $[\omega]$ converging to $\rho$. This means $|\rho_{(i)}\wedge\rho| \to \infty$ which clearly implies $\rho \in [\omega]$. Now for every open set $V$ and every $\rho \in V$, note that there is $\epsilon>0$ such that $\rho \in N(\rho,\epsilon)\subseteq V$. We choose $n$ large enough such that $e^{-\alpha(n-1)}< \epsilon$, then obviously $[\rho_1\rho_2...\rho_n]=N(\rho,e^{-\alpha(n-1)})\subset N(\rho,\epsilon)\subset V$.\\
(b). The fact that an open $V$ can be written as a countable union of cylinders is clear from part a. Then part b follows from the fact that for any two cylinders $[\omega]$ and $[\tau]$ that meet each other, we have either $[\omega] \subset [\tau]$ or $[\tau] \subset [\omega]$. To show this, assume $\rho$ belongs to both of the cylinders $[\omega]$ and $[\tau]$. Further, assume $|\omega|\leq |\tau|$. Since $\rho \in [\tau]$, we should have $\rho=\tau\rho'$ for some $\rho' \in E_A^{\mathbb{N}}$, similarly $\rho \in [\omega]$ implies that $\rho=\omega\rho''$ for some $\rho'' E_A^{\mathbb{N}}$. Thus $\tau\rho'=\rho=\omega\rho''$ and since $|\omega| \leq |\tau|$ so $\tau=\omega\omega'$ for some finite word $\omega'$. This implies $[\tau]\subseteq [\omega]$.\\
(c). Note that a countable product of separable spaces is separable and a countable product of complete metrizable spaces is complete metrizable.
\end{proof}
We would like to mention that we only work with probability measures over Borel sets all through this work.
\begin{definition}
For a measurable transformation $T: X\to X$ on a measure space $(X, \mathcal{B})$ we say a measure $\mu$ is \textbf{$T-$invariant} if for every $A \in \mathcal{B}$:
$$\mu(T^{-1}(A))=\mu(A).$$
Further we say $\mu$ is \textbf{ergodic} if $\mu$ is $T-$invariant measure such that if $T^{-1}(A)=A$ then either $\mu(A)=0$ or $\mu(A)=1$.
\end{definition}
\begin{definition}\label{finitely irreducible}
We call a subshift \textbf{finitely irreducible} if there exists a finite set $\Omega$ containing words such that for all $e,e' \in E$ there is $\omega \in \Omega$ such that $e\omega e'$ is admissible. As well subshift is called \textbf{finitely primitive} if it is finitely irreducible and all words in $\Omega$ are of fixed length.
\end{definition}
Throughout this paper, we restrict ourselves to work with finitely irreducible subshifts.
\begin{remark}
Note that this notion is just a generalization of an irreducible matrix when $E$ is countable. In fact, a finitely irreducible condition guarantees that the shift map is topologically mixing, and a finitely primitive guarantees that the shift map is topologically exact. Additionally, it is clear that if the shift space is finitely irreducible then the backward orbit of every element is dense, i.e. 
$$\overline{\cup_{n=0}^{\infty}\sigma^{-n}(\rho)}=E_A^{\mathbb{N}}.$$
\end{remark}
\begin{proposition}\label{spectral radius of A}
If $E$ is finite,
$$\log r(A)=\lim_n \frac{1}{n}\log \#E^n_A,$$
where $r(A)$ is spectral radius of matrix $A$.
\end{proposition}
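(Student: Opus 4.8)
The plan is to realize $\#E_A^n$ as the sum of all entries of the $n$-th power of the incidence matrix $A$, and then compare this sum with the spectral radius via the standard equivalence of norms on the finite-dimensional matrix algebra. Concretely, for a finite alphabet $E$ the matrix $A$ is a square $0$-$1$ matrix indexed by $E\times E$, and an easy induction shows that $(A^n)_{ee'}$ counts the number of admissible words $\omega = \omega_1\cdots\omega_{n+1}$ with $\omega_1 = e$ and $\omega_{n+1}=e'$; summing over all $e,e'$ gives $\mathbf{1}^{\mathsf T} A^n \mathbf{1} = \#E_A^{n+1}$ (up to the harmless index shift, since a word of length $n$ corresponds to $n-1$ consecutive admissible transitions). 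So it suffices to show $\lim_n \tfrac1n \log \big(\mathbf{1}^{\mathsf T} A^n \mathbf{1}\big) = \log r(A)$.

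First I would observe that $\mathbf{1}^{\mathsf T}A^n\mathbf{1} = \sum_{e,e'} (A^n)_{ee'}$ is comparable, up to constants independent of $n$, to the operator norm $\|A^n\|$ (any two norms on the finite-dimensional space of $E\times E$ matrices are equivalent; and since $A\ge 0$ entrywise, $\|A^n\|_\infty$ and the entry-sum differ by at most a factor of $\#E$). Hence $\tfrac1n\log(\mathbf1^{\mathsf T}A^n\mathbf1) = \tfrac1n\log\|A^n\| + o(1)$. Then Gelfand's formula $r(A) = \lim_n \|A^n\|^{1/n}$ gives exactly $\lim_n \tfrac1n\log\|A^n\| = \log r(A)$, which finishes the argument. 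One caveat to address: if $A$ is the zero matrix (or more generally nilpotent), then $E_A^n$ is eventually empty, $\log\#E_A^n = -\infty$, and $r(A)=0$, so the identity holds in the extended sense $-\infty = \log 0$; I would either exclude this degenerate case by the standing assumption that the subshift is finitely irreducible (which forces $A^n$ to have a strictly positive entry for suitable $n$, hence $\#E_A^n \ge 1$ for all large $n$) or simply note the convention.

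The only mild obstacle is bookkeeping: making the correspondence between ``words of length $n$'' and ``$n-1$ admissible transitions'' precise so that the index shift does not corrupt the limit — but since we divide by $n$ and take $n\to\infty$, any fixed shift or any multiplicative constant between $\#E_A^n$ and $\mathbf1^{\mathsf T}A^n\mathbf1$ washes out. With the finitely irreducible hypothesis in force, $r(A)>0$ and the limit exists as a genuine real number, so no subadditivity argument (Fekete) is even needed beyond what Gelfand's formula already supplies; alternatively one could invoke Perron–Frobenius for the irreducible matrix $A$ to identify $r(A)$ with the Perron eigenvalue and bound the entry-sum of $A^n$ between $c\,r(A)^n$ and $C\,r(A)^n$ directly, which gives the cleanest constants.
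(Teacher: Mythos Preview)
Your argument is correct. The paper does not actually supply a proof of this proposition; it merely cites Theorem~3.2.22 of \cite{Munday}. Your route --- identifying $\#E_A^{n}$ with the entry-sum $\mathbf{1}^{\mathsf T}A^{n-1}\mathbf{1}$, comparing that sum to $\|A^{n-1}\|$ via equivalence of norms on the finite-dimensional matrix algebra (the non-negativity of $A$ making this clean), and then invoking Gelfand's formula --- is exactly the standard proof one finds in the symbolic-dynamics literature, and is almost certainly what the cited reference does. Your handling of the index shift and of the degenerate nilpotent case is also correct; under the standing finitely-irreducible hypothesis the matrix $A$ is irreducible, so Perron--Frobenius guarantees $r(A)>0$ and the limit is a genuine real number.
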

\begin{proof}
We refer to theorem 3.2.22 \cite{Munday}.
\end{proof} 
Next, we want to talk about the H\"{o}lder continuous maps. In Analysis textbooks \citep[p. 52]{Holder} we have different notions of H\"{o}lder continuity of exponent $\alpha$ for real or complex-valued functions on a Euclidean space $D$:
\begin{itemize}
\item H\"{o}lder at a point $x_0$:
$\sup_{x\in U} \{|f(x)-f(x_0)|/|x-x_0|^{\alpha}\}$ is finite, where $U$ is a neighborhood of $x_0$ in $D$.
\item H\"{o}lder:
$\sup_{x,y \in D} \{|f(x)-f(y)|/|x-y|^{\alpha}\}$ is finite.
\item Locally H\"{o}lder:
$\sup_{x,y \in K} \{|f(x)-f(y)|/|x-y|^{\alpha}\}$ is finite for every compact $K \subseteq D$.

\end{itemize}

We call each of the above suprema the \textbf{H\"{o}lder coefficient}. Of course, $D$ can be replaced with the metric space $E_A ^{\mathbb{N}}$ to obtain similar notions on the shift space. We denote the set of complex-valued H\"{o}lder continuous functions of \textbf{H\"{o}lder exponent} $\alpha$ on $E_A ^{\mathbb{N}}$ by $C^{0,\alpha}(E_A^{\mathbb{N}},\mathbb{C})$ or simply $C^{0,\alpha}$.
We remind that the usual \textbf{H\"{o}lder coefficient} is defined by:
$$|g|_{\alpha}=\sup_{\rho,\rho' \in E_A^{\mathbb{N}}} \left\{\frac{|g(\rho)-g(\rho')|}{d_{\alpha}(\rho,\rho')} \right\}.$$
We would like to define another \textbf{H\"{o}lder coefficient} that is justified later. We set:
$$V_{\alpha,n}(f):=\sup\{|f(\rho_1)-f(\rho_2)|e^{\alpha(n-1)} : |\rho_1\wedge\rho_2|\geq n\geq 1 \},$$
and
$$V_{\alpha}(f):=\sup_{n\geq 1} V_{\alpha,n}(f).$$
There is another notion of H\"{o}lder continuity useful for our purposes. 
\begin{definition}\label{HO type}
A complex-valued function f on $E_A^{\mathbb{N}}$ is called \textbf{H\"{o}lder-type} continuous with exponent $\alpha>0$ if $V_{\alpha}(f) < \infty$.
\end{definition}
We define a \textbf{norm} on $C^{0,\alpha}(E_A^{\mathbb{N}},\mathbb{C})$ by
\begin{equation}\label{norm1}
\|g\|_{\alpha}:=\|g\|_{\infty}+V_{\alpha}(g)
\end{equation}
We are ready to find relations between these different notions of H\"{o}lder continuity.
\begin{proposition}\label{norm equivalence}
The followings hold:
\begin{itemize}
    \item[(a)] On $E_A ^{\mathbb{N}}$ every complex-valued function is H\"{o}lder continuous iff it is H\"{o}lder-type continuous and bounded.
    \item[(b)] The norm given above in \ref{norm1} is equivalent to usual $\|.\|_{C^{0,\alpha}}=\|.\|_{\infty}+|.|_{\alpha}$ norm over $C^{0,\alpha}(E_A^{\mathbb{N}},\mathbb{C})$.
    \item[(c)] $\left(C^{0,\alpha}(E_A^{\mathbb{N}},\mathbb{C}), \|.\|_{\alpha}\right)$ is Banach space.
    \item[(d)] A H\"{o}lder-type continuous function is locally H\"{o}lder continuous and H\"{o}lder continuous at every point.
\end{itemize}
\end{proposition}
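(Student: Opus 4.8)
I would prove the four parts in the order (a), (b), (c), (d), the whole argument hinging on one elementary comparison between the two H\"older coefficients. The first step is to note that $d_\alpha(\rho,\rho')=e^{-\alpha|\rho\wedge\rho'|}\le 1$, so $(E_A^{\mathbb N},d)$ has diameter at most $1$, and then to unwind the definition of $V_\alpha$: for a fixed pair $\rho_1\neq\rho_2$ the expression $|f(\rho_1)-f(\rho_2)|\,e^{\alpha(n-1)}$ is nondecreasing in $n$, so its supremum over the admissible range $1\le n\le|\rho_1\wedge\rho_2|$ is attained at $n=|\rho_1\wedge\rho_2|$ when $|\rho_1\wedge\rho_2|\ge 1$, and no such $n$ exists (so the pair is irrelevant to $V_\alpha$) when $|\rho_1\wedge\rho_2|=0$. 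This gives
$$V_\alpha(f)=e^{-\alpha}\,\sup\bigl\{|f(\rho_1)-f(\rho_2)|\,e^{\alpha|\rho_1\wedge\rho_2|}:|\rho_1\wedge\rho_2|\ge 1\bigr\},$$
while $|f|_\alpha$ is the same supremum over \emph{all} pairs, the extra ones being exactly those with $|\rho_1\wedge\rho_2|=0$, for which the ratio is $|f(\rho_1)-f(\rho_2)|\le 2\|f\|_\infty$. Hence $e^{\alpha}V_\alpha(f)\le|f|_\alpha\le 2\|f\|_\infty+e^{\alpha}V_\alpha(f)$, which is the engine for (a) and (b).

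Given this, (a) is immediate: if $|f|_\alpha<\infty$ then $V_\alpha(f)\le e^{-\alpha}|f|_\alpha<\infty$, and boundedness follows from the diameter bound since $|f(\rho)|\le|f(\rho_0)|+|f|_\alpha\,d(\rho,\rho_0)^\alpha\le|f(\rho_0)|+|f|_\alpha$ for a fixed $\rho_0$; conversely, if $f$ is bounded and H\"older-type, the right-hand inequality bounds $|f|_\alpha$. For (b), one checks that $\|\cdot\|_\alpha$ is a norm (straightforward: $\|\cdot\|_\infty$ and $V_\alpha$ are each seminorms, and $\|f\|_\alpha=0$ forces $f=0$), and then reads the two inequalities on $C^{0,\alpha}$, where all quantities are finite: $\|f\|_\alpha\le\|f\|_\infty+e^{-\alpha}|f|_\alpha\le\|f\|_{C^{0,\alpha}}$ (using $\alpha>0$), and $\|f\|_{C^{0,\alpha}}=\|f\|_\infty+|f|_\alpha\le 3\|f\|_\infty+e^{\alpha}V_\alpha(f)\le 3e^{\alpha}\|f\|_\alpha$.

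For (c) I would prove completeness directly. Let $(g_k)$ be $\|\cdot\|_\alpha$-Cauchy; it is $\|\cdot\|_\infty$-Cauchy, hence converges uniformly to a bounded function $g$. For $\rho_1,\rho_2$ with $|\rho_1\wedge\rho_2|\ge n\ge 1$, pass to the limit $l\to\infty$ in $|(g_k-g_l)(\rho_1)-(g_k-g_l)(\rho_2)|\,e^{\alpha(n-1)}\le V_\alpha(g_k-g_l)$ and then take the supremum over all such $\rho_1,\rho_2,n$ to get $V_\alpha(g_k-g)\le\limsup_{l}V_\alpha(g_k-g_l)$, which tends to $0$ as $k\to\infty$ by the Cauchy property. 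In particular $V_\alpha(g)\le V_\alpha(g_k)+V_\alpha(g_k-g)<\infty$, so $g\in C^{0,\alpha}$ by (a), and $\|g_k-g\|_\alpha=\|g_k-g\|_\infty+V_\alpha(g_k-g)\to 0$.

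Finally (d), where the one point of care is that over an infinite alphabet a H\"older-type function need not be bounded, so one cannot simply invoke (a). For the estimate at a point $\rho^{(0)}$, take the clopen neighborhood $U=[\rho^{(0)}_1]$ from Proposition \ref{polish}; every $\rho\in U$ has $|\rho\wedge\rho^{(0)}|\ge 1$, and the argument of the first paragraph yields $|f(\rho)-f(\rho^{(0)})|\le e^{-\alpha(|\rho\wedge\rho^{(0)}|-1)}V_\alpha(f)=e^{\alpha}V_\alpha(f)\,d(\rho,\rho^{(0)})^\alpha$, so the H\"older coefficient at $\rho^{(0)}$ on $U$ is at most $e^{\alpha}V_\alpha(f)$; in particular $f$ is continuous. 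For local H\"older continuity on a compact $K$, cover $K$ by the first-letter cylinders $\{[e]:e\in E\}$ and extract a finite subcover, so only finitely many first symbols occur in $K$; on each $[e]\cap K$ one has $|f(\rho)-f(\rho')|\le V_\alpha(f)$ (take $n=1$), hence $f$ is bounded on $K$, say by $M$, and then $|f(\rho)-f(\rho')|/d(\rho,\rho')^\alpha\le\max\{2M,e^{\alpha}V_\alpha(f)\}<\infty$ for all $\rho\neq\rho'$ in $K$. The main obstacle, such as it is, is purely bookkeeping: correctly handling the off-by-one factor $e^{-\alpha}$ and the role of the pairs with no common first symbol in the first paragraph; once that comparison is in place the rest is routine.
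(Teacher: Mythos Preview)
Your argument is correct and follows essentially the same route as the paper: the key step in both is the comparison $e^{\alpha}V_\alpha(f)\le|f|_\alpha\le\max\{2\|f\|_\infty,e^{\alpha}V_\alpha(f)\}$, from which (a) and (b) are read off with the same constants (your $3e^{\alpha}$ versus the paper's $3+e^{\alpha}$ is immaterial). For (c) and (d) the paper merely cites the standard completeness result and says ``easy to show,'' so your direct completeness argument and your compactness-based proof of (d) simply supply the details the paper omits.
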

\begin{proof}
a) Assume $f$ is H\"{o}lder continuous function, then there is $M$ such that
$$|f(\rho_1)-f(\rho_2)| \leq M d(\rho_1,\rho_2)^{\alpha}=M e^{-\alpha |\rho_1 \wedge \rho_2|},$$
for every $\rho_1$ and $\rho_2$. Therefore 
$$|f(\rho_1)|\leq |f(\rho_1)-f(\rho_2)|+|f(\rho_2)|\leq M+|f(\rho_2)|.$$
This gives boundedness of $f$. For H\"{o}lder-type, assuming $|\rho_1 \wedge \rho_2| \geq n$, it follows 
$$|f(\rho_1)-f(\rho_2)|e^{\alpha(n-1)} \leq M e^{-\alpha},$$
i.e. $V_{\alpha}(f)\leq M e^{-\alpha}$.\\
For the converse, assuming that $|f|\leq K$ for some constant $K$, and $|\rho_1 \wedge \rho_2|= n \geq 1$ we have
$$|f(\rho_1)-f(\rho_2)|e^{\alpha(n-1)} \leq V_{\alpha}(f).$$
Therefore 
$$|f(\rho_1)-f(\rho_2)| \leq V_{\alpha}(f) e^{-\alpha(n-1)}=V_{\alpha}(f)e^{\alpha}d(\rho_1,\rho_2)^{\alpha}.$$
In case $|\rho_1 \wedge \rho_2|=0$, we use boundedness of $f$ to get
$$|f(\rho_1)-f(\rho_2)| \leq 2K=2Kd(\rho_1,\rho_2)^{\alpha}.$$
Thus 
$$|f(\rho_1)-f(\rho_2)| \leq \max\{2K, V_{\alpha}(f)e^{\alpha}\} d(\rho_1,\rho_2)^{\alpha},$$
for every $\rho_1$ and $\rho_2$.\\
b) From the proof above we realize that $V_{\alpha}(f)\leq |f|_{\alpha}e^{-\alpha}$ which leaves
$$\|f\|_{\alpha} \leq \|f\|_{\infty}+|f|_{\alpha} e^{-\alpha} \leq \|f\|_{\infty}+|f|_{\alpha}.$$
Furthermore $|f|_{\alpha}\leq \max\{2K, V_{\alpha}(f)e^{\alpha}\}$ gives us
$$\|f\|_{\infty}+|f|_{\alpha} \leq 3\|f\|_{\infty}+V_{\alpha}(f)e^{\alpha}\leq (3+e^{\alpha})\|f\|_{\alpha}.$$
c) This is a well-known fact, see for example \cite[~ p. 73]{Holder} for a Euclidean space.\\
d) This is easy to show.
\end{proof}
\begin{remark}
We want to justify why we used the terminology H\"{o}lder-type:
\begin{itemize}
    \item The H\"{o}lder-type continuous functions subject of study in this paper in the case of infinite alphabets are summable. This makes them unbounded and so they are not H\"{o}lder.
    \item  Let $E=\mathbb{N}$. One can see that $f: E^{\mathbb{N}} \to \mathbb{R}$ defined by $f(kn_2n_3n_4...)=\ln 1/n_k^2$, is H\"{o}lder continuous at each point (consider $[kn_2n_3n_4...n_k]$) and locally H\"{o}lder continuous but is not H\"{o}lder-type continuous.
    \item  Note that locally H\"{o}lder continuous on $E_A ^{\mathbb{N}}$ wouldn't imply continuity necessarily, however H\"{o}lder continuity at a point clearly implies continuity.
    \item Regarding H\"{o}lder continuity at a point even if we were able to find a uniform bound for H\"{o}lder coefficients that worked for all the points it still doesn't imply H\"{o}lder-type continuity necessarily.
    \item  Over shift space with finite alphabets H\"{o}lder continuity and H\"{o}lder-type continuity coincide.
\end{itemize}
\end{remark}
Below we need to use a sequence of finite words in the lemma. For that we use the notation $\omega_{(i)}$, to denote that it is not the $i^{th}$ coordinate of $\omega$ which we represent by $\omega_i$.
\begin{lemma}
Let $\{\omega_{(i)}\}_{i \in I}$ be any collection of finite words with bounded length, i.e. there exists a positive integer $k$ such that $|\omega_{(i)}|\leq k$ for each $i$. If the cylinders $\{[\omega_{(i)}]\}_{i \in I}$ are mutually disjoint, then the indicator function of $H:=\cup_{i \in I}[\omega_{(i)}]$ is H\"{o}lder continuous, i.e. $\mathbb{1}_{H} \in C^{0,\alpha}(E_A^{\mathbb{N}},\mathbb{C})$.
\begin{proof}
We want to show there exists $M>0$ such that 
$$|\mathbb{1}_{H}(\rho)-\mathbb{1}_{H}(\rho')|\leq M d(\rho,\rho'),$$
for every $\rho,\rho' \in E_A^{\mathbb{N}}$. If $\rho,\rho' \in H$, there is nothing to prove as the left-hand side is $0$. Similarly if $\rho,\rho' \notin H$. If $\rho \in H$ and $\rho' \notin H$, then there is $i$ such that $\rho \in [\omega_{(i)}]$. But $|\rho\wedge\rho'|<|\omega_{(i)}|$, otherwise $\rho' \in [\omega_{(i)}]$. Therefore 
$$e^{-k}\leq e^{-|\omega_{(i)}|} \leq e^{-|\rho\wedge\rho'|}=d(\rho,\rho')  .$$
Thus if we just pick $M=e^k$, then for each $\rho,\rho'$ we have
$$|\mathbb{1}_{H}(\rho)-\mathbb{1}_{H}(\rho')|\leq M d(\rho,\rho').$$
\end{proof}
\end{lemma}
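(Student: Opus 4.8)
The plan is to verify directly the defining Hölder estimate: I want to produce a constant $M>0$ such that
$|\mathbb{1}_{H}(\rho)-\mathbb{1}_{H}(\rho')|\le M\, d_{\alpha}(\rho,\rho')$ for all $\rho,\rho'\in E_A^{\mathbb{N}}$, which by the definition of $|\cdot|_{\alpha}$ is exactly membership of $\mathbb{1}_{H}$ in $C^{0,\alpha}$, the boundedness of $\mathbb{1}_{H}$ by $1$ being immediate. Since $\mathbb{1}_{H}$ takes only the values $0$ and $1$, the left-hand side vanishes unless exactly one of the two points lies in $H$, so after relabelling I may assume $\rho\in H$ and $\rho'\notin H$, and the whole problem reduces to this case.

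In that case $\rho\in[\omega_{(i)}]$ for some index $i\in I$. If $\rho$ and $\rho'$ agreed on their first $|\omega_{(i)}|$ coordinates, then $\rho'_1\cdots\rho'_{|\omega_{(i)}|}=\omega_{(i)}$, hence $\rho'\in[\omega_{(i)}]\subseteq H$, contradicting $\rho'\notin H$; therefore $|\rho\wedge\rho'|\le|\omega_{(i)}|-1\le k-1$. Consequently $d_{\alpha}(\rho,\rho')=e^{-\alpha|\rho\wedge\rho'|}\ge e^{-\alpha(k-1)}$, and choosing $M:=e^{\alpha k}$ (any value at least $e^{\alpha(k-1)}$ works) gives $|\mathbb{1}_{H}(\rho)-\mathbb{1}_{H}(\rho')|=1\le M\, d_{\alpha}(\rho,\rho')$, as wanted. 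Equivalently, the same estimate shows $V_{\alpha}(\mathbb{1}_{H})\le e^{\alpha k}<\infty$, so $\mathbb{1}_{H}$ is bounded and H\"{o}lder-type continuous, and Proposition \ref{norm equivalence}(a) then concludes the argument.

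I do not anticipate any real obstacle; the entire content is the elementary separation bound $d_{\alpha}(\rho,\rho')\ge e^{-\alpha(k-1)}$ that must hold whenever $\mathbb{1}_{H}$ jumps between $\rho$ and $\rho'$. It is worth recording where the hypotheses enter: the uniform length bound $|\omega_{(i)}|\le k$ is essential, since without it one can arrange $\mathbb{1}_{H}$ to be continuous but not H\"{o}lder by using cylinders of unbounded length; by contrast, the mutual disjointness of the $[\omega_{(i)}]$ is not actually needed for the argument above, as it only ever uses the implication $\rho'\notin H\Rightarrow\rho'\notin[\omega_{(i)}]$.
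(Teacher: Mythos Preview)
Your proof is correct and follows essentially the same approach as the paper: both reduce to the case $\rho\in H$, $\rho'\notin H$, observe that $|\rho\wedge\rho'|<|\omega_{(i)}|\le k$, and conclude that the distance is bounded below so a single constant works. Your version is marginally more careful in working directly with $d_{\alpha}$ and in noting that the disjointness hypothesis is never used, but the argument is the same.
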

\begin{lemma}\label{bounded S_n}
If $f: E_A^{\mathbb{N}} \rightarrow \mathbb{C}$ is H\"{o}lder-type continuous with $V_{\alpha}(f)<\infty$ then there exists $K_{f}>0$ such that for any $\omega \in E^n_A$ and any $\rho,\rho' \in E^{\mathbb{N}}_A$ where $\omega\rho, \omega\rho'$ are admissible we have
$$|S_nf(\omega\rho)-S_nf(\omega\rho')|\leq K_{f} d(\rho,\rho ').$$
\end{lemma}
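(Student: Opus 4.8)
The plan is to expand $S_nf$ as an ergodic (Birkhoff) sum, $S_nf=\sum_{j=0}^{n-1}f\circ\sigma^j$, and to estimate the differences term by term using the coefficient $V_\alpha(f)$. The whole argument rests on a single elementary observation about the overlap of shifted points: for $0\le j\le n-1$ one has $\sigma^j(\omega\rho)=\omega_{j+1}\omega_{j+2}\cdots\omega_n\,\rho$ and $\sigma^j(\omega\rho')=\omega_{j+1}\cdots\omega_n\,\rho'$ (these are legitimate points of $E_A^{\mathbb{N}}$ precisely because $\omega\rho$ and $\omega\rho'$ were assumed admissible), so the two sequences agree on their first $n-j$ symbols coming from the tail of $\omega$ and then on exactly as many further symbols as $\rho$ and $\rho'$ do. Hence
$$\bigl|\sigma^j(\omega\rho)\wedge\sigma^j(\omega\rho')\bigr|=(n-j)+|\rho\wedge\rho'|\ \ge\ n-j\ \ge\ 1 .$$

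Next I would invoke the definition of $V_\alpha(f)$ in the form $|f(\tau_1)-f(\tau_2)|\le V_\alpha(f)\,e^{-\alpha(m-1)}$ whenever $|\tau_1\wedge\tau_2|\ge m\ge 1$. Applied with $m=(n-j)+|\rho\wedge\rho'|$, this bounds the $j$-th summand of $S_nf(\omega\rho)-S_nf(\omega\rho')$ by $V_\alpha(f)\,e^{-\alpha((n-j)+|\rho\wedge\rho'|-1)}$. Summing over $j$ and reindexing by $i=n-j\in\{1,\dots,n\}$ converts the estimate into a geometric series that is uniformly summable in $n$:
$$|S_nf(\omega\rho)-S_nf(\omega\rho')|\ \le\ V_\alpha(f)\,e^{-\alpha(|\rho\wedge\rho'|-1)}\sum_{i=1}^{n}e^{-\alpha i}\ \le\ \frac{V_\alpha(f)}{1-e^{-\alpha}}\;e^{-\alpha|\rho\wedge\rho'|}.$$
Since $e^{-\alpha|\rho\wedge\rho'|}=d(\rho,\rho')^{\alpha}=d_\alpha(\rho,\rho')$, this is exactly the asserted inequality, with the admissible choice $K_f:=V_\alpha(f)/(1-e^{-\alpha})$; crucially $K_f$ does not depend on $n$, because the convergence of $\sum_i e^{-\alpha i}$ gives a bound independent of the number of terms.

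The argument is essentially bookkeeping, so I do not expect a genuine obstacle; the two points that need to be stated with care are (i) the overlap identity above, which is where admissibility of $\omega\rho$ and $\omega\rho'$ is used, and which also disposes of the corner case $|\rho\wedge\rho'|=0$ automatically, since the overlap is then still $\ge n-j\ge 1$ and the $V_\alpha$ inequality still applies; and (ii) the fact that the right-hand side of the statement is the exponent-$\alpha$ quantity $d_\alpha=d^{\alpha}$ — equivalently, since $d\le 1$ on $E_A^{\mathbb{N}}$, the bound holds literally with $d(\rho,\rho')$ once $\alpha\ge 1$. This is precisely the form of the estimate (a bound for the oscillation of Birkhoff sums over a cylinder, uniform in the length) that is needed later for the transfer operator, which is why it is recorded here and motivates the name of the coefficient $V_\alpha$.
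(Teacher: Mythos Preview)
Your argument is correct and is precisely the standard telescoping/geometric-series proof that the paper defers to Mauldin--Urba\'{n}ski; there is nothing to add on the method. Your observation that the conclusion is really a bound by $d_\alpha(\rho,\rho')=d(\rho,\rho')^{\alpha}$ rather than $d(\rho,\rho')$ is also correct and matches how the lemma is actually applied throughout the paper (e.g.\ the paper itself writes $K_f\,d(\cdot,\cdot)^{\alpha}$ when invoking it later), so the ``$d$'' in the statement should be read as $d_\alpha$.
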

\begin{proof}
We refer to \citep[p. ~26]{MU}.
\end{proof}
A sequence $\{a_n\}$ of real numbers is called \textbf{subadditive} if for every positive integer $m,n$:
$$a_{m+n}\leq a_m+a_n.$$
\begin{lemma}[Fekete's Lemma]\label{fekete}
For every subadditive sequence $\{a_n\}$, the limit of the sequence $\{\frac{a_n}{n}\}$ exists and it is equal to $\inf_n \left\{\frac{a_n}{n}\right\}$.
\end{lemma}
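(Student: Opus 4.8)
The plan is the standard division-with-remainder argument. First I would set $L:=\inf_{n\geq 1}\{a_n/n\}$, which a priori lies in $[-\infty,+\infty)$, and observe that by definition $a_n/n\geq L$ for every $n$, so immediately $\liminf_n a_n/n\geq L$. Thus the whole content is to prove $\limsup_n a_n/n\leq L$; once that is done, the two inequalities force the limit to exist and equal $L$.

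To bound the $\limsup$, fix any real number $M>L$ (this formulation covers the case $L=-\infty$ uniformly). By definition of the infimum there is a fixed index $m$ with $a_m/m<M$. For an arbitrary $n$ I would use the Euclidean division $n=qm+r$ with $q\geq 0$ and $0\leq r<m$, and iterate subadditivity: $a_{qm}\leq q\,a_m$ (by induction on $q$, using $a_{(j+1)m}\leq a_{jm}+a_m$), and then $a_n=a_{qm+r}\leq a_{qm}+a_r\leq q\,a_m+a_r$ when $r\geq 1$, while $a_n=a_{qm}\leq q\,a_m$ when $r=0$. Dividing by $n$ gives
$$\frac{a_n}{n}\leq \frac{q a_m}{n}+\frac{\max_{1\leq r<m}|a_r|}{n}.$$
Here the set $\{a_1,\dots,a_{m-1}\}$ is finite, so its maximum absolute value is a finite constant independent of $n$; call it $C$.

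Now I would let $n\to\infty$ with $m$ held fixed. Since $qm\leq n<(q+1)m$, we have $q/n\to 1/m$, and $C/n\to 0$, so
$$\limsup_{n\to\infty}\frac{a_n}{n}\leq \frac{a_m}{m}<M.$$
As $M$ was an arbitrary real number exceeding $L$, letting $M\downarrow L$ yields $\limsup_n a_n/n\leq L$. Combined with $\liminf_n a_n/n\geq L$ from the first paragraph, the limit exists and equals $L=\inf_n\{a_n/n\}$, as claimed.

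The only place requiring a little care — the ``main obstacle,'' such as it is — is the bookkeeping around the remainder: one must keep $m$ fixed while $n\to\infty$ so that the term $q a_m/n$ genuinely tends to $a_m/m$ (not to something depending on $n$), and one must handle the $r=0$ case and the possibility $L=-\infty$ without circularity. Introducing the auxiliary constant $C=\max_{1\leq r<m}|a_r|$ and phrasing the estimate with a generic $M>L$ rather than with $\varepsilon$ takes care of both points cleanly. Everything else is routine.

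\begin{proof}
See the proof proposal above; this is a classical result.
\end{proof}
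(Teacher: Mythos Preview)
Your proof is correct and is the classical division-with-remainder argument for Fekete's lemma. The paper does not give its own proof at all; it simply cites an external reference (\cite[p.~5]{MU}), so there is nothing to compare against beyond noting that your argument is the standard one.
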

\begin{proof}
We refer to \citep[~ p. 5]{MU}.
\end{proof}
\begin{lemma}\label{liminf}
Let $f_i(T)$ be a collection of non-negative functions defined on $T>0$. Then 
$$\sum_i \liminf_{T \rightarrow \infty}f_i(T) \leq \liminf_{T \to \infty} \sum_i f_i(T) $$
\end{lemma}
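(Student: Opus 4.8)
\section*{Proof proposal}

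The plan is to reduce the statement to the elementary superadditivity of $\liminf$ for a pair of functions, then bootstrap first to finite index sets by induction and finally to the (possibly uncountable) index set $I$ using non-negativity. The starting observation is that for a family of elements of $[0,+\infty]$ the unordered sum satisfies $\sum_{i \in I} a_i = \sup\bigl\{\sum_{i \in F} a_i : F \subseteq I \text{ finite}\bigr\}$, so it is enough to prove the inequality with $I$ replaced by an arbitrary finite $F$ and then pass to the supremum over such $F$ at the very end.

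For a single pair $f,g \geq 0$ I would use that for every $S>0$,
$$\inf_{T \geq S}\bigl(f(T)+g(T)\bigr) \;\geq\; \inf_{T \geq S} f(T) + \inf_{T \geq S} g(T),$$
where both sides are non-decreasing in $S$ and everything lies in $[0,+\infty]$; letting $S \to \infty$ gives $\liminf_{T\to\infty}(f+g) \geq \liminf_{T\to\infty} f + \liminf_{T\to\infty} g$. An immediate induction on $\#F$ then yields, for every finite $F \subseteq I$,
$$\liminf_{T\to\infty}\ \sum_{i \in F} f_i(T) \;\geq\; \sum_{i \in F}\ \liminf_{T\to\infty} f_i(T).$$

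To finish, fix a finite $F \subseteq I$. Since each $f_i$ is non-negative we have $\sum_{i \in I} f_i(T) \geq \sum_{i \in F} f_i(T)$ for every $T>0$, and $\liminf_{T\to\infty}$ is monotone with respect to pointwise domination, so
$$\liminf_{T\to\infty}\ \sum_{i \in I} f_i(T) \;\geq\; \liminf_{T\to\infty}\ \sum_{i \in F} f_i(T) \;\geq\; \sum_{i \in F}\ \liminf_{T\to\infty} f_i(T).$$
Taking the supremum over all finite $F \subseteq I$ and invoking the characterization of the unordered sum recalled above gives $\liminf_{T\to\infty} \sum_{i \in I} f_i(T) \geq \sum_{i \in I} \liminf_{T\to\infty} f_i(T)$, which is the claim. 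There is no genuine obstacle here; the only point that needs a word of care is that several of the quantities involved (individual $\liminf$'s, or the total sum) may be $+\infty$, but since every manipulation takes place in $[0,+\infty]$ and relies only on monotone convergence of infima and the order-theoretic definition of unordered sums of non-negative terms, each step remains valid in that setting.
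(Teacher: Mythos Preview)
Your proof is correct and follows essentially the same route as the paper: truncate to a finite subcollection, use non-negativity to bound the finite partial sum by the full sum, apply monotonicity of $\liminf$, and then pass to the supremum over finite subsets. The only differences are cosmetic---you spell out the finite case via pairwise superadditivity of $\liminf$ (the paper simply declares it ``clear'') and you phrase the final step for a possibly uncountable index set, whereas the paper explicitly restricts to countable collections.
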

\begin{proof}
Of course, if the collection is finite, this is clear. We show it for an infinite countable collection. As each $f_i$ is non-negative so for each $n$
$$\sum_{i=1}^n f_i(T) \leq \sum_i f_i(T).$$
Taking liminf from both sides
$$\sum_{i=1}^n \liminf_{T \to \infty} f_i(T) \leq \liminf_{T \to \infty}\sum_i f_i(T).$$
This holds for each $n$, therefore we get the inequality.
\end{proof}
Unfortunately, analogous inequality for limsup doesn't hold even if $\sum_i f_i(T)$ is uniformly bounded above. Alternatively, we mention the following inequality.
\begin{lemma}\label{f,g}
For any two non-negative functions $f(T), g(T)$ defined on $T>0$, we have 
$$\liminf_{T \to \infty} \left(f(T)+g(T)\right)\leq \liminf_{T \to \infty} f(T) +\limsup_{T \to \infty} g(T)\leq \limsup_{T \to \infty} \left(f(T)+g(T)\right).$$
\begin{proof}
Let $\underline{l}=\liminf_{T \to \infty} \left(f(T)+g(T)\right)$, and $\overline{g}=\limsup_{T \to \infty} g(T)$. For $\epsilon>0$ there is $T_0$ such that for $T>T_0$ we have 
$$\underline{l}-\epsilon \leq f(T)+g(T) \leq f(T)+\overline{g}+\epsilon,$$
$$\underline{l}-\overline{g}-2\epsilon \leq f(T),$$
which establishes the left inequality. A similar argument gives the right inequality.
\end{proof}
\end{lemma}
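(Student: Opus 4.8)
The plan is to deduce both inequalities from two elementary facts about limits superior and inferior: for real-valued functions $a,b$ on $(0,\infty)$ one has the superadditivity of $\liminf$, namely $\liminf_{T\to\infty}(a+b)\ge\liminf_{T\to\infty}a+\liminf_{T\to\infty}b$, and the subadditivity of $\limsup$, namely $\limsup_{T\to\infty}(a+b)\le\limsup_{T\to\infty}a+\limsup_{T\to\infty}b$, each valid whenever the right-hand side is not of the indeterminate form $\infty-\infty$, together with $\liminf_{T\to\infty}(-h)=-\limsup_{T\to\infty}h$. The point is to feed these inequalities pairs of functions whose sum telescopes back to $f$ or to $g$.

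For the left inequality, write $f=(f+g)+(-g)$ and apply superadditivity of $\liminf$ to the pair $f+g$ and $-g$: this gives $\liminf_{T\to\infty}f\ge\liminf_{T\to\infty}(f+g)+\liminf_{T\to\infty}(-g)=\liminf_{T\to\infty}(f+g)-\limsup_{T\to\infty}g$, and rearranging yields exactly $\liminf_{T\to\infty}(f+g)\le\liminf_{T\to\infty}f+\limsup_{T\to\infty}g$. For the right inequality, write $g=(-f)+(f+g)$ and apply subadditivity of $\limsup$ to the pair $-f$ and $f+g$: this gives $\limsup_{T\to\infty}g\le\limsup_{T\to\infty}(-f)+\limsup_{T\to\infty}(f+g)=-\liminf_{T\to\infty}f+\limsup_{T\to\infty}(f+g)$, and rearranging yields $\liminf_{T\to\infty}f+\limsup_{T\to\infty}g\le\limsup_{T\to\infty}(f+g)$. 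Alternatively, and perhaps more transparently for the reader, one can run the classical $\epsilon$-argument directly: fixing $\epsilon>0$ and choosing $T_0$ so that $g(T)\le\limsup_{T\to\infty}g+\epsilon$ for $T>T_0$ gives $f(T)+g(T)\le f(T)+\limsup_{T\to\infty}g+\epsilon$ there, whence taking $\liminf$ over $T$ and letting $\epsilon\to0$ proves the left inequality; the right inequality is the mirror image, choosing $T_0$ so that $f(T)\ge\liminf_{T\to\infty}f-\epsilon$ for $T>T_0$.

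I do not expect a genuine obstacle: the only point requiring a word of care is that some of these quantities may equal $+\infty$, so one must check that no indeterminate $\infty-\infty$ is ever formed when rearranging — and this is precisely where the non-negativity hypothesis is used. Since $f,g\ge0$, all of $\liminf_{T\to\infty}f$, $\limsup_{T\to\infty}g$, $\liminf_{T\to\infty}(f+g)$ and $\limsup_{T\to\infty}(f+g)$ lie in $[0,+\infty]$. If $\limsup_{T\to\infty}g=+\infty$ the left inequality is trivial, its right-hand side being $+\infty$, while the right inequality follows from $f+g\ge g$, which forces $\limsup_{T\to\infty}(f+g)=+\infty$ as well; and if $\liminf_{T\to\infty}f=+\infty$ then $f(T)\to\infty$, hence $f(T)+g(T)\to\infty$, so every term in the chain equals $+\infty$. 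In all remaining cases the pertinent right-hand sides are finite and the rearrangements above are legitimate, completing the proof.
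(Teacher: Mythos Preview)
Your proof is correct and essentially matches the paper's approach: the paper runs exactly the direct $\epsilon$-argument you sketch as your alternative, while your primary route via the super/subadditivity of $\liminf$ and $\limsup$ is just a cleaner repackaging of the same idea. You are in fact more careful than the paper in explicitly handling the cases where some of the quantities are $+\infty$.
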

A real-valued function $f$ on $E_A^{\mathbb{N}}$ is called \textbf{summable} if 
$$\sum_{e \in E} \exp(\sup_{[e]} f)<\infty.$$
One purpose of this definition is to define an operator on the space of bounded complex-valued continuous functions on $E_A^{\mathbb{N}}$. Therefore we can extend this definition to complex-valued functions.
\begin{definition}\label{summable 2}
A complex-valued function $f$ on $E_A^{\mathbb{N}}$ is called \textbf{summable} if 
$$\sum_{e \in E} \exp(\sup_{[e]} \operatorname{Re} (f))<\infty.$$
\end{definition}

\begin{definition}\label{RPF}
For a complex-valued H\"{o}lder-type summable function $f$ we introduce \textbf{Ruelle-Perron-Frobenius} operator, also known as \textbf{transfer} operator
$$\mathcal{L}_{f}:C_b(E_ A^{\mathbb{N}},\mathbb{C}) \rightarrow C_b(E_ A^{\mathbb{N}},\mathbb{C})$$ $$\mathcal{L}_{f} (g)(\rho)=\sum\limits_{\substack{ e \in E_{\rho}}}\exp \left(f(e\rho)\right)g(e\rho), $$ 
where the sum is taken over all  $e \in E$ that  $e\rho$ is admissible, i.e. $A_{e \rho_1}=1$.
\end{definition}
\begin{remark}
Here we would like to mention:
\begin{itemize}
    \item  If this $f$ over shift with infinite letters is summable, then definition \ref{summable 2} yields that $\text{Re}(f)$ should go to $-\infty$, i.e. $f$ is unbounded.  Therefore it is not H\"{o}lder continuous, see proposition \ref{norm equivalence}.
    \item  As well it is clear that when $E$ is finite then every real-valued $f$ is summable.
    \item  Further one can see that this operator preserves $C^{0,\alpha}(E_A^{\mathbb{N}},\mathbb{C})$.
\end{itemize}
\end{remark}
Next, we want to consider the adjoint operator $\mathcal{L}^*_{f}$ acting on $C_b(E_A^{\mathbb{N}},\mathbb{C})^{*}$ which is the space of all regular bounded additive set functions \citep[p. 262]{Dunford} (by an additive set function we mean a complex-valued function $g$ defined on the algebra, not necessarily $\sigma$-algebra, generated by the closed sets such that g is finitely additive, not necessarily countably additive). Below we mention a result which for case $E$ finite is due to Ruelle \cite{Ruelle} and for $E$ infinite is due to Mauldin-Urba\'{n}ski \citep[~ p. 50]{MU}.
\begin{theorem}\label{R. D. Mauldin and M. Urbanski}
If $f: E_A^{\mathbb{N}} \to \mathbb{R}$ is real-valued summable and H\"{o}lder-type continuous function, then the adjoint operator $\mathcal{L}^*_{f}$ admits an eigenmeasure $m$ with eigenvalue $\exp(P(f))$.
\end{theorem}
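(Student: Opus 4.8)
The plan is to construct the eigenmeasure $m$ via a fixed-point argument on the space of Borel probability measures, following the classical Ruelle approach adapted to the countable-alphabet setting. First I would observe that $\mathcal{L}_f^*$ maps the weak$^*$-compact convex set $\mathcal{M}$ of Borel probability measures on $E_A^{\mathbb{N}}$ into the space of positive bounded additive set functions; the key normalization step is to define the continuous map
$$\Phi(\nu) = \frac{\mathcal{L}_f^*\nu}{(\mathcal{L}_f^*\nu)(\mathbbm{1})} = \frac{\mathcal{L}_f^*\nu}{\int \mathcal{L}_f\mathbbm{1}\, d\nu},$$
provided one first checks that $\mathcal{L}_f\mathbbm{1}$ is a well-defined bounded strictly positive continuous function, which is exactly where summability of $f$ is used (the series $\sum_{e\in E_\rho}\exp(f(e\rho))$ converges, and H\"older-type continuity plus Lemma~\ref{bounded S_n} gives the requisite control). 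Then I would apply the Schauder–Tychonoff fixed point theorem to $\Phi$ on $\mathcal{M}$ (using weak$^*$ compactness of $\mathcal{M}$, which needs the Polish/compactness structure — here one must be slightly careful since for infinite $E$ the space $E_A^{\mathbb{N}}$ need not be compact, so one works instead with the continuity of $\mathcal{L}_f$ on $C_b$ and tightness, or cites the construction in \citep[p.~50]{MU} directly). A fixed point $m = \Phi(m)$ satisfies $\mathcal{L}_f^* m = \lambda m$ with $\lambda = \int \mathcal{L}_f\mathbbm{1}\, dm > 0$.

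The second part is to identify the eigenvalue $\lambda$ with $\exp(P(f))$. Here I would use the characterization of the topological pressure $P(f)$ via partition sums: iterating the eigenvalue relation gives $\lambda^n = \int \mathcal{L}_f^n \mathbbm{1}\, dm$, and $\mathcal{L}_f^n\mathbbm{1}(\rho) = \sum_{\omega \in E^n_{\rho}} \exp(S_n f(\omega\rho))$. Using Lemma~\ref{bounded S_n} (bounded distortion of Birkhoff sums along a cylinder) together with finite irreducibility of the subshift — which lets one pass between $\sum_{\omega\in E^n_A}\exp(\sup_{[\omega]} S_nf)$ and the values $\mathcal{L}_f^n\mathbbm{1}(\rho)$ up to multiplicative constants uniform in $n$ — I would show
$$\frac{1}{n}\log \lambda^n = \frac{1}{n}\log \int \mathcal{L}_f^n\mathbbm{1}\, dm \longrightarrow \lim_n \frac{1}{n}\log \sum_{\omega\in E^n_A}\exp\bigl(\sup_{[\omega]} S_nf\bigr) = P(f),$$
where the limit on the right exists by subadditivity (Fekete's Lemma~\ref{fekete}, applied to $a_n = \log\sum_{\omega\in E^n_A}\exp(\sup_{[\omega]}S_nf)$, using that $S_{m+n}f \le S_m f + S_n f\circ\sigma^m$ on cylinders up to the distortion constant). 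Since $\lambda$ is independent of $n$, this forces $\log\lambda = P(f)$, i.e. $\lambda = \exp(P(f))$.

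I expect the main obstacle to be the non-compactness of $E_A^{\mathbb{N}}$ when $E$ is infinite, which is precisely why summability is hypothesized and why the infinite-alphabet case is genuinely due to Mauldin–Urba\'nski rather than a routine transcription of Ruelle. The fixed-point argument must be run either on a suitable compact subset carrying full measure, or replaced by a direct construction: one fixes $\rho_0$, considers the normalized measures $\lambda_n^{-1}(\mathcal{L}_f^*)^n \delta_{\rho_0}$ (with $\lambda_n = \mathcal{L}_f^n\mathbbm{1}(\rho_0)^{1/n}$-type normalizers), proves this family is tight using summability to control the mass escaping to ``infinite'' symbols, and extracts a weak$^*$-limit. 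Verifying tightness — that $\sum_{e: \sup_{[e]} f \text{ small}} m_n([e])$ is uniformly small — is the delicate quantitative estimate, and it leans on the summability condition $\sum_e \exp(\sup_{[e]}f) < \infty$ together with the bounded-distortion Lemma~\ref{bounded S_n}. Given the excerpt explicitly permits citing \citep[p.~50]{MU} and \cite{Ruelle}, the cleanest route in the paper is to reduce to those references after setting up the transfer operator's basic positivity and boundedness properties, but I would at minimum sketch the fixed-point-plus-tightness skeleton above so the statement is not left as a pure citation.
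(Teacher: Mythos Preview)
The paper does not prove this theorem at all: it is stated as a citation, attributing the finite-alphabet case to Ruelle and the infinite case to Mauldin--Urba\'nski \citep[p.~50]{MU}, and immediately moves on. Your sketch therefore already exceeds what the paper itself provides.

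As for the content of your sketch, it is essentially the approach one finds in those references. You correctly flag the only real issue: for infinite $E$ the space $E_A^{\mathbb{N}}$ is not compact, so the weak$^*$ compactness needed for a naive Schauder--Tychonoff argument fails, and one must instead establish tightness of the approximating measures from summability (your second suggested route). Mauldin--Urba\'nski in fact do something close to your ``direct construction'' alternative rather than a pure fixed-point argument, precisely for this reason. Your identification of $\lambda$ with $\exp(P(f))$ via $\frac{1}{n}\log\int\mathcal{L}_f^n\mathbbm{1}\,dm$ and bounded distortion is the standard way. One small caution: in the Mauldin--Urba\'nski setting the fixed point is first produced as a finitely additive set function (this is why the paper mentions $C_b^*$ as the space of regular bounded additive set functions), and countable additivity is then verified separately using summability---your sketch glosses over this step, but it is not deep.
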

This $P(f)$ is introduced below in definition \ref{pressure}.
\begin{definition}\label{def Gibbs}
A \textbf{Gibbs state} for a real-valued function $f$ on $E_A^{\mathbb{N}}$ is a probability measure $m$ on $E_A^{\mathbb{N}}$ for which there is $Q>1$ and $P\in \mathbb{R}$ such that:
$$Q^{-1}\leq \frac{m([\omega])}{\exp\left(S_nf(\omega\rho)-Pn\right)}\leq Q, \; \; \;  \forall \omega \in E_A ^{n}, \; \;  \forall \omega\rho  \;\text{admissible} .$$
\end{definition}
It is clear that a Gibbs state has full support, i.e.
$$\text{supp}(m)=E_A^{\mathbb{N}}.$$
Another important fact is that once we get an eigenmeasure from theorem \ref{R. D. Mauldin and M. Urbanski} it follows that it is actually a Gibbs state for $f$ \citep[p. 28]{MU}. Using this Gibbs state an invariant ergodic Gibbs state $\mu$ for $f$ can be constructed as well \citep[p. 14]{MU}. Furthermore, it is clear that if $f$ is H\"{o}lder-type so is any constant multiple of $f$. However, the summable property of $f$ doesn't necessarily carry on to any constant multiple of $f$. We set 
$$\Gamma:=\{ x \in \mathbb{R}: xf \; \; \text{summable} \}.$$
Clearly, if $E$ is finite then $\Gamma=\mathbb{R}$ and if $E$ is infinite then definition \ref{summable 2} tells us $x_1 \in \Gamma$ implies $x_2 \in \Gamma$ for any $x_2>x_1$, i.e. $\Gamma$ is half line. Therefore using the above explanation we obtain Gibbs state for $xf$ ($x \in \Gamma$) as well:
\begin{equation}\label{Gibbs}
Q_x^{-1}\leq \frac{m_x([\omega])}{\exp\left(xS_nf(\omega\rho)-P(x)n\right)}\leq Q_x, \; \; \;  \forall \omega \in E_A ^{n}, \; \;  \forall \omega\rho  \;\text{admissible} .
\end{equation}

\begin{definition}\label{pressure}
The \textbf{topological pressure} of a real-valued function $f$ on $E_A ^{\mathbb{N}}$ is defined by
$$P(f)=\lim_{n\to \infty}\frac{1}{n}\ln \big( \sum\limits_{\omega \in E_A^n}\exp( \sup_{[\omega]}S_nf) \big).$$
\end{definition}
This limit exists by Fekete's lemma \ref{fekete}. 
\begin{definition}
A invariant ergodic measure $\mu$ is called \textbf{equilibrium state} for a real-valued function $f$ on $E_A^{\mathbb{N}}$ if it is a Gibbs state for $f$ and it established the following equation:
$$P(f)= h_{\mu}(\sigma)+\int f\text{d}\mu,$$
where $h_{\mu}$ is Kolmogorov entropy of the shift map $\sigma$.
Note that in general under a much weaker assumption for $f$ we have the following equation known as \textbf{variational principle}:
$$P(f)=\sup \{ h_{\mu}(\sigma)+\int f\text{d}\mu \},$$
where the supremum is taken over invariant ergodic measures $\mu$. Furthermore, we set
$$\chi_{\mu}=-\int f\text{d}\mu,$$
and call it \textbf{Lyapunov exponent}.
\end{definition}
One can see that $P$ in definition \ref{def Gibbs} is actually the same as the topological pressure of  $f$ \cite[~ p. 13]{MU}. This means
$$P(x)=P(xf), \; \; x\in \Gamma.$$
We can actually show this function is strictly decreasing on $\Gamma$ assuming some weak condition. This is a well-known fact for function systems, but here we don't assume $f$ is induced by a function system and so we prove it. First, we need the following lemma.
\begin{lemma}\label{measure goes to zero}
If $\mu$ is an invariant ergodic Gibbs measure then 
$$\lim_n \sup_{\omega \in E_A^n} \mu{([\omega])}=0.$$
\end{lemma}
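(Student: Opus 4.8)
The plan is to argue by contradiction using the finite irreducibility of the subshift together with the Gibbs bounds. Suppose the conclusion fails; then there is $\varepsilon > 0$ and a sequence $n_k \to \infty$ together with words $\omega^{(k)} \in E_A^{n_k}$ such that $\mu([\omega^{(k)}]) \geq \varepsilon$ for all $k$. First I would fix a point $\rho_0 \in E_A^{\mathbb{N}}$ and use the density of the backward orbit of $\rho_0$ (guaranteed by finite irreducibility, as noted in the remark after Definition \ref{finitely irreducible}) — equivalently, I would use finite irreducibility directly: there is a finite set $\Omega$ of connecting words such that for any two symbols $e, e'$ some $\omega \in \Omega$ with $e\omega e'$ admissible exists. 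Let $p = \max_{\omega \in \Omega}|\omega|$.

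The key step is to produce many disjoint cylinders of comparable measure inside a bounded-length "block." Concretely, for a fixed large $N$, I would concatenate blocks of the form $\omega^{(k_1)} \tau_1 \omega^{(k_2)} \tau_2 \cdots$ where the $\tau_j \in \Omega$ are chosen so that each concatenation is admissible. Using the Gibbs property \eqref{Gibbs} (with $x = 1$, $m = m_1$ the Gibbs state, $Q = Q_1$), one has $m([\omega\tau\omega']) \asymp m([\omega])\, m([\omega'])\, e^{\text{(bounded)}}$ up to multiplicative constants depending only on $Q$ and on $\sup_{[\tau]} S_{|\tau|} f$ over the finitely many $\tau \in \Omega$; crucially these constants are uniform. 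So from a single word $\omega^{(k)}$ with $m([\omega^{(k)}]) \geq \varepsilon$ and from the fact that different first symbols give disjoint cylinders, I can manufacture, for each $M$, at least $M$ pairwise disjoint admissible cylinders each of $m$-measure at least $c\,\varepsilon^M$ for a uniform $c = c(Q,\Omega) \in (0,1)$ — wait, that decays. Better: I would instead use that $\mu$ and $m$ are comparable (bounded Radon–Nikodym derivative, since $\mu$ is the invariant Gibbs state built from $m$ with density bounded above and below, see \citep[p. 14]{MU}), and then derive the contradiction from $m$ directly via the following counting: if $\mu([\omega^{(k)}]) \geq \varepsilon$ for infinitely many lengths, then by shift-invariance of $\mu$, $\mu(\sigma^{-j}[\omega^{(k)}]) \geq \varepsilon$, and one builds from the finitely-irreducible structure a positive-measure "recurrence" set that is eventually swept out, contradicting that $\mu$ is a probability measure once enough disjoint translates accumulate.

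More cleanly, here is the route I would actually write: by the Gibbs inequality for $m$, $m([\omega]) \leq Q \exp(S_{|\omega|} f(\omega\rho) - P\,|\omega|)$, and since $f$ is summable with $P = P(f)$, the tail sums $\sum_{\omega \in E_A^n,\ \omega \ni \text{(large symbols)}} m([\omega])$ are small uniformly in $n$; hence any cylinder of measure $\geq \varepsilon$ must begin with one of finitely many symbols, say from a finite set $F \subseteq E$. Restricting attention to $E_F^{\mathbb{N}}$-like sub-blocks with bounded "digits" reduces to the compact (finite-alphabet) situation, where $\sup_{\omega \in E_A^n} \mu([\omega]) \to 0$ is classical: it follows from ergodicity plus the fact that $\max_\omega \mu([\omega])$ is submultiplicative-in-$n$ up to the connecting words, and if the limit were a positive number $\beta$ one would get a $\sigma$-invariant set of measure in $(0,1)$ (the set of points whose orbit closure has an atom), violating ergodicity. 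I would invoke Lemma \ref{fekete} to handle the submultiplicativity and Lemma \ref{measure goes to zero}'s own hypothesis (invariant ergodic Gibbs) at exactly this point.

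The main obstacle is the infinite alphabet: without a reduction to finitely many leading symbols one cannot prevent mass from "escaping to infinity," and the naive submultiplicativity argument breaks because the connecting words $\Omega$ interact with summability. I expect the technical heart of the proof to be showing that for every $\varepsilon > 0$ there is a finite $F \subseteq E$ and an integer $N_0$ such that every $\omega \in E_A^n$ with $\mu([\omega]) \geq \varepsilon$ satisfies $\omega_1, \ldots, \omega_n \in F$ once $n \geq N_0$ is false in general, so instead one shows only that the first symbol (or first few symbols) lies in $F$, combined with the observation that $\sum_{\omega \in E_A^n} \mu([\omega]) = 1$ forces $\#\{\omega : \mu([\omega]) \geq \varepsilon\} \leq 1/\varepsilon$ for each fixed $n$ — and then the contradiction comes from exhibiting, via $\Omega$, strictly more than $1/\varepsilon$ disjoint cylinders of length $n$ each of measure $\geq \varepsilon/2$ for $n$ large, using that a measure-$\varepsilon$ cylinder $[\omega^{(k)}]$ of length $n_k$ can be prolonged in at least two admissible ways at comparable cost while a shorter one, refined, splits its mass — a pigeonhole that must be set up carefully so the constants stay uniform.
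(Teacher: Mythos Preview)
Your proposal circles several plausible strategies but completes none of them, and the final pigeonhole idea has a real obstruction: prolonging a cylinder $[\omega]$ into $[\omega e]$ and $[\omega e']$ gives disjoint sub-cylinders whose measures \emph{sum} to at most $\mu([\omega])$, so you cannot manufacture more than $\lfloor 1/\varepsilon\rfloor$ disjoint cylinders of measure $\geq \varepsilon$ from a single one in this way --- the constants do not stay uniform, as you yourself suspect. The submultiplicativity/Fekete route is also incomplete: even if $b_{m+n} \leq C\, b_m b_n$ held, Fekete only yields $\lim_n n^{-1}\log b_n \in [-\infty, 0]$, and you have not ruled out this limit being $0$ with $b_n \not\to 0$; your appeal to ``a $\sigma$-invariant set of measure in $(0,1)$'' is never made precise.

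The paper's argument is shorter and entirely different. It first observes that the supremum $b_n = \sup_{\omega \in E_A^n}\mu([\omega])$ is \emph{attained} (since $\sum_\omega \mu([\omega]) = 1$), hence $(b_n)$ is nonincreasing and converges to some $b$. If $b > 0$, fix $0 < \varepsilon < b$ and set $F_n = \{\omega \in E_A^n : \mu([\omega]) \geq \varepsilon\}$; each $F_n$ is finite and nonempty, and every $\omega e \in F_{n+1}$ has $\omega \in F_n$, so a K\"onig-lemma argument along this tree produces a single point $\rho \in E_A^{\mathbb{N}}$ with $\mu(\{\rho\}) \geq \varepsilon$. Invariance and ergodicity then force $\rho$ to be periodic with $\mu$ concentrated on its finite orbit, contradicting the fact that a Gibbs state has full support. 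The key step you are missing is precisely this passage from ``large cylinders at every scale'' to an actual \emph{atom}, followed by the ergodicity analysis of that atom; no concatenation, finite-irreducibility constants, or summability estimates enter the proof at all.
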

\begin{proof}
Let $b_n=\sup_{\omega \in E_A^n} \mu{([\omega])}$. Note that this supremum is attained so $b_n$ is decreasing, therefore $b_n$ is convergent to some $b$. Fix $0<\epsilon<b$ and for each $n$ define
$$F_n:=\{ \omega \in  E_A^n : \epsilon \leq \mu([\omega])\}.$$
Clearly, $F_n$ is finite. If $\omega e \in F_{n+1}$ then $\epsilon \leq \mu([\omega e]) \leq \mu([\omega])$ which implies $\omega \in F_n$, i.e. each $F_{n+1}$ extends some of $F_n$. If this extension process stops at moment $m$ or in other words, $F_m=\emptyset$ then $\mu([\omega]) < \epsilon$ for all $\omega \in E_A^m$, i.e. $b\leq b_m \leq \epsilon$. Therefore this process cannot stop and so we get at least one element $\rho=e_1e_2e_3... \in E_A^{\mathbb{N}}$ such that $\epsilon \leq \mu([e_1...e_n])$ for each $n$. This means $\epsilon \leq \mu(\{ \rho\})$. We will show $\rho$ is periodic and the periodic orbit of $\rho$ $O_+(\rho)$ has full measure which is a contradiction.\\ 
Let $A:=\cup_{n \geq 0}\sigma^{-n}(\{\rho\}) $. Clearly either $\sigma^{-1}(A)=A$ or $\sigma^{-1}(A)\cup\{\rho\}=A$. In the latter case $\mu(\sigma^{-1}(A))+\mu(\{ \rho \})=\mu(A)$ which yields $\mu(\{ \rho \})=0$ using invariant property of $\mu$. In the former case, $\rho$ must be periodic with some period $m$. Since $\sigma^{-1}(A)=A$, ergodicity either yields $\mu(\{ \rho \}) \leq \mu(A)=0$, or otherwise  $\mu(A)=1$. Note that for each $i>0$ we have $\sigma^{i-1} (\rho) \in \sigma^{-1}\left(\{\sigma^i(\rho)\}\right)$ so
$$\mu(\{ \sigma^{i-1}(\rho) \})\leq \mu\left(\sigma^{-1}(\{\sigma^i(\rho)\})\right)=\mu(\{ \sigma^i(\rho) \}),$$ 
and since $\rho=\overline{e_1e_2...e_m}$ we have
$$\mu(\{ \rho \})\leq \mu(\{ \sigma(\rho) \}) \leq ... \leq \mu(\{ \sigma^{m-1}(\rho) \}) \leq \mu(\{ \rho \}).$$
Therefore the inequalities in the above line are all equality. For each $n\geq 0$ we know $\sigma^{-n}(\{ \rho \})$ meets $O_+(\rho)$ in exactly one point and since $\mu\left( \sigma^{-n}(\{ \rho \})\right)=\mu(\{ \rho\})$ thus the whole mass of $\sigma^{-n}(\{ \rho \})$ is on $\sigma^{-n}(\{ \rho \}) \cap O_+(\rho)$. Therefore
$$1=\mu(A)=\mu(O_+(\rho)).$$
\end{proof}
\begin{proposition}\label{pressure decreasing}
If $P(x_0)\leq 0$ for some $x_0$ then $P(x)$ is strictly decreasing on $\Gamma$.
\end{proposition}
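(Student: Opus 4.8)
The plan is to show that the pressure function $P(x) = P(xf)$ is differentiable on the interior of $\Gamma$ with $P'(x) = \int f \, d\mu_x$ (where $\mu_x$ is the equilibrium state for $xf$), and then to argue that this derivative is strictly negative once $P$ dips to zero or below. The key structural fact is convexity: for $x_1, x_2 \in \Gamma$ and $t \in [0,1]$, H\"{o}lder's inequality applied to the defining sum in Definition \ref{pressure} gives $P(tx_1 + (1-t)x_2) \le t P(x_1) + (1-t) P(x_2)$, so $P$ is convex on the interval $\Gamma$; in particular it is continuous on the interior and has one-sided derivatives everywhere. Combined with differentiability (which follows from analytic perturbation theory for $\mathcal{L}_{xf}$, the machinery the paper sets up in the next section, or can be quoted from \cite{MU}), we get that $P'$ is nondecreasing.

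Next I would establish that $P$ is \emph{non-increasing} as a first step: since $\chi_{\mu_x} = -\int f \, d\mu \ge 0$ is forced by summability considerations (the equilibrium state has finite, in fact positive, Lyapunov exponent — $f$ being summable pushes $\operatorname{Re} f$, hence $f$, towards $-\infty$, so $\int f\,d\mu < 0$ for any Gibbs measure $\mu$), we have $P'(x) = \int f\, d\mu_x = -\chi_{\mu_x} \le 0$ throughout. So $P$ is weakly decreasing on all of $\Gamma$ already. The remaining task is to promote "weakly decreasing" to "strictly decreasing" under the hypothesis $P(x_0) \le 0$.

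For strictness, suppose toward a contradiction that $P$ is constant on some nondegenerate subinterval $[a,b] \subseteq \Gamma$; by convexity $P'$ vanishes on $(a,b)$, i.e. $\int f \, d\mu_x = 0$ there. This means $\chi_{\mu_x} = 0$, so the equilibrium state has zero Lyapunov exponent. I would then use Lemma \ref{measure goes to zero}: the Gibbs inequality \eqref{Gibbs} reads $Q_x^{-1} \le m_x([\omega]) \exp(-xS_nf(\omega\rho) + P(x)n)^{-1} \le Q_x$, and with $\int f\,d\mu_x = 0$ the Birkhoff averages $\frac{1}{n}S_nf \to 0$ $\mu_x$-a.e., so along a positive-measure set of words the measure $m_x([\omega])$ is comparable to $e^{P(x)n}$. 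If $P(x) = P(x_0) \le 0$ this keeps $\mu_x([\omega])$ bounded below along a full-measure set of length-$n$ cylinders as $n \to \infty$ — more carefully, one extracts (as in the proof of Lemma \ref{measure goes to zero}) an atom, contradicting that $\mu_x$ is a non-atomic Gibbs state on an infinite orbit. Wrapping this up: constancy of $P$ on any subinterval forces an atom via Lemma \ref{measure goes to zero}, which is impossible, so $P$ is strictly decreasing on $\Gamma$.

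The main obstacle I anticipate is the differentiability/entropy bookkeeping in the middle step: cleanly justifying $P'(x) = \int f\,d\mu_x$ and that $\int f \, d\mu_x < 0$ strictly (not just $\le 0$) in the infinite-alphabet setting requires care, since $h_{\mu_x}(\sigma)$ and $\int xf\,d\mu_x$ can each be infinite a priori. I would sidestep the worst of this by not computing $P'$ at all and instead running a direct comparison: for $x_1 < x_2$ in $\Gamma$, use the variational principle $P(x_1) \ge h_{\mu_{x_2}}(\sigma) + x_1\int f\,d\mu_{x_2} = P(x_2) + (x_1 - x_2)\int f\,d\mu_{x_2}$, so it suffices to show $\int f\,d\mu_{x_2} < 0$; and if instead $\int f \, d\mu_{x_2} = 0$, feed that into Lemma \ref{measure goes to zero} together with $P(x_2) \le 0$ to manufacture the contradictory atom directly. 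This keeps the argument within the tools already available in the excerpt.
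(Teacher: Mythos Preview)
Your approach via convexity, the variational principle, and the Lyapunov exponent is genuinely different from the paper's, and the intuition is reasonable, but as written it has two real gaps.

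\textbf{First gap: the ``weakly decreasing from summability'' step is false in general.} You claim $\int f\,d\mu_x\le 0$ for all $x$ because summability forces $f\to-\infty$. That reasoning only applies when $E$ is infinite. For finite $E$ every $f$ is summable, and nothing prevents $\int f\,d\mu_x>0$; indeed the proposition is simply false without the hypothesis $P(x_0)\le 0$ (take $f$ a positive constant). So you cannot establish $P'\le 0$ on all of $\Gamma$ before invoking the hypothesis.

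\textbf{Second gap: circularity and the $P=0$ case.} In your sidestep you assume $\int f\,d\mu_{x_2}=0$ and then appeal to ``$P(x_2)\le 0$'' to extract an atom. But $P(x_2)\le 0$ is exactly what you have not yet shown for $x_2<x_0$; you only know it at $x_0$. Even granting $P(x_2)\le 0$, your Birkhoff argument only produces a contradiction when $P(x_2)<0$ strictly: in that case $\mu_{x_2}([\rho_{|n}])\asymp e^{-nP(x_2)+o(n)}\to\infty$, absurd. When $P(x_2)=0$ you only get subexponential behaviour $e^{o(n)}$, which is perfectly compatible with Lemma~\ref{measure goes to zero}, so no atom is forced. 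A correct version of your route would instead plug $\mu_{x_2}$ into the variational principle \emph{at} $x_0$: if $\int f\,d\mu_{x_2}\ge 0$ and $x_0>0$, then $0\ge P(x_0)\ge h_{\mu_{x_2}}+x_0\int f\,d\mu_{x_2}\ge h_{\mu_{x_2}}$, forcing $h_{\mu_{x_2}}=0$; one then needs a separate argument that Gibbs states on a finitely irreducible shift with at least two letters have strictly positive entropy, which Lemma~\ref{measure goes to zero} alone does not supply.

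\textbf{What the paper does instead.} The paper avoids all of this by a direct estimate. From the Gibbs inequality \eqref{Gibbs} at $x_0$ together with Lemma~\ref{measure goes to zero} (and $P(x_0)\le 0$) one obtains an integer $N$ with
\[
\exp\bigl(x_0\sup_{[\omega]}S_Nf\bigr)\le \exp\bigl(x_0\sup_{[\omega]}S_Nf-NP(x_0)\bigr)\le e^{-1}\qquad\text{for all }\omega\in E_A^N.
\]
This is the crucial uniform bound: it says $\sup_{[\omega]}S_Nf\le -1/x_0$ for every length-$N$ word. Now for $x_1<x_2$ one writes $e^{x_2\sup S_{kN}f}=e^{x_1\sup S_{kN}f}\cdot e^{(x_2-x_1)\sup S_{kN}f}$ and bounds the second factor by $e^{-k(x_2-x_1)/x_0}$ using submultiplicativity, yielding the quantitative gap
\[
P(x_2)\le P(x_1)-\frac{x_2-x_1}{Nx_0}.
\]
No differentiability, no entropy, no equilibrium-state bookkeeping is needed; the decay of cylinder measures in Lemma~\ref{measure goes to zero} does all the work.
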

\begin{proof}
We start with the following estimate and we use \ref{Gibbs} for it:
$$\exp\left(x_0\sup_{[\omega]}S_nf-nP(x_0)\right)\leq Q m([\omega]).$$
Next we use the above lemma to find $N$ such that for every $n\geq N$ and every $\omega \in E_A^n$:
$$\exp\left( x_0\sup_{[\omega]}S_nf-nP(x_0)\right) \leq Q \mu([\omega])\leq Q\sup_{\omega \in E_A^n} \mu([\omega])\leq e^{-1}.$$
Then for all $k>0$ and $\omega ' \in E_A^{kN}$:
$$\exp\left( x_0\sup_{[\omega']}S_{kN}f-kNP(x_0)\right) \leq \exp\left(x_0k\sup_{[\omega]} S_Nf-Nk P(x_0) \right) \leq  e^{-k}.$$
Consider $x_1<x_2$ in $\Gamma$, we use the above estimate to find
$$\sum\limits_{\substack{\omega' \in E_A^{kN}}}\exp(x_2 \sup_{[\omega']}S_{Nk}f) = \sum\limits_{\substack{\omega' \in E_A^{kN}}}\exp(x_1 \sup_{[\omega']}S_{Nk}f) \exp\left((x_2-x_1) \sup_{[\omega']}S_{Nk}f\right)$$
$$=\sum\limits_{\substack{\omega' \in E_A^{kN}}}\exp(x_1 \sup_{[\omega']}S_{Nk}f) \exp\left(\frac{x_2-x_1}{x_0}\big( x_0\sup_{[\omega']}S_{Nk}f-kNP(x_0)\big)\right)\exp\left( \frac{x_2-x_1}{x_0}kNP(x_0) \right)$$
$$ \leq \sum\limits_{\substack{\omega' \in E_A^{kN} }}\exp(x_1 \sup_{[\omega']}S_{Nk}f) \exp \left(-k\frac{x_2-x_1}{x_0}\right)=\exp \left(-k\frac{x_2-x_1}{x_0}\right) \sum\limits_{\substack{\omega' \in E_A^{kN} }}\exp(x_1 \sup_{[\omega']}S_{Nk}f)$$
Now if we take $\log$, divide by $kN$ and let $k \to \infty$, we obtain
$$P(x_2)\leq -\frac{x_2-x_1}{Nx_0}+P(x_1)<P(x_1).$$
\end{proof}
\begin{definition}\label{strongly regular}
A real-valued function $f: E^{\mathbb{N}}_A \rightarrow \mathbb{R}$ is called \textbf{regular} if $P(x)=0$ for some $x>0$ and is called \textbf{ strongly regular} if it is regular and $0<P(x)<\infty$ for some $x>0$.
\end{definition}
\begin{remark}
It is worth mentioning
\begin{itemize}
\item
If $P(x)$ is strictly decreasing, it can have only one root say $\delta$. Further, strong regularity means
$$\inf \Gamma < \delta.$$
\item
The above proposition can be proved under weaker assumption: $\inf_{x \in \Gamma} P(x)\leq 0.$
\end{itemize}
\end{remark}
\begin{proposition}
If $f$ is strongly regular, the first derivative of $P$ at $\delta$ is $P'(\delta)=-\chi_{\mu_{\delta}}$.
\end{proposition}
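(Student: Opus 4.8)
The plan is to derive the identity $P'(\delta)=\int f\,d\mu_{\delta}=-\chi_{\mu_{\delta}}$ from the variational principle together with convexity of the pressure, reducing the whole statement to the differentiability of $P$ at $\delta$, which I would then settle using the analytic perturbation theory of the transfer operator developed in the next section. The convexity is routine: since $S_n\big((\lambda x_1+(1-\lambda)x_2)f\big)=\lambda S_n(x_1f)+(1-\lambda)S_n(x_2f)$ pointwise, one gets $\sup_{[\omega]}S_n\big((\lambda x_1+(1-\lambda)x_2)f\big)\le\lambda\sup_{[\omega]}S_n(x_1f)+(1-\lambda)\sup_{[\omega]}S_n(x_2f)$; applying H\"older's inequality with exponents $1/\lambda$ and $1/(1-\lambda)$ to the sums in Definition \ref{pressure}, then taking logarithms, dividing by $n$ and letting $n\to\infty$, gives $P(\lambda x_1+(1-\lambda)x_2)\le\lambda P(x_1)+(1-\lambda)P(x_2)$ for $x_1,x_2\in\Gamma$ and $\lambda\in(0,1)$. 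Since $P$ is finite on $\Gamma$ (the eigenvalue $e^{P(xf)}$ furnished by Theorem \ref{R. D. Mauldin and M. Urbanski} is a genuine positive real for each $x\in\Gamma$) and strong regularity forces $\inf\Gamma<\delta<\sup\Gamma$, the point $\delta$ is interior to the domain of the finite convex function $P$; hence the one-sided derivatives $P'_-(\delta)\le P'_+(\delta)$ exist and are finite, and $\partial P(\delta)=[P'_-(\delta),P'_+(\delta)]$.

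Now let $\mu_{\delta}$ be the invariant ergodic equilibrium (Gibbs) state of $\delta f$, which exists by the constructions recalled above and is the unique equilibrium state of $\delta f$ (two Gibbs states for the same potential are mutually boundedly equivalent, hence equal by ergodicity). Being an equilibrium state it satisfies $P(\delta)=h_{\mu_{\delta}}(\sigma)+\delta\int f\,d\mu_{\delta}$, with both summands finite, and by the variational principle $P(x)\ge h_{\mu_{\delta}}(\sigma)+x\int f\,d\mu_{\delta}$ for every $x\in\Gamma$. Thus the affine function $x\mapsto h_{\mu_{\delta}}(\sigma)+x\int f\,d\mu_{\delta}$ supports the convex graph of $P$ from below and meets it at $x=\delta$; equivalently $-\chi_{\mu_{\delta}}=\int f\,d\mu_{\delta}\in\partial P(\delta)$, i.e. $P'_-(\delta)\le-\chi_{\mu_{\delta}}\le P'_+(\delta)$. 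Consequently it suffices to prove that $P$ is differentiable at $\delta$, for then $P'(\delta)=\int f\,d\mu_{\delta}=-\chi_{\mu_{\delta}}$.

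For the differentiability I would use the spectral picture of the next section: $\lambda(x):=e^{P(xf)}$ is a simple isolated eigenvalue of $\mathcal{L}_{xf}$, and $x\mapsto\mathcal{L}_{xf}$ is an analytic operator family near $\delta$, so by Kato's perturbation theorem $\lambda(x)$, the eigenfunction $h_x$ and the eigenmeasure $m_x$ (normalised so that $m_x(h_x)=1$) depend analytically on $x$; hence $P(x)=\log\lambda(x)$ is real-analytic near $\delta$, in particular differentiable there. In fact, differentiating $\mathcal{L}_{xf}h_x=\lambda(x)h_x$ in $x$, pairing with $m_x$, and using $\mathcal{L}_{xf}^{*}m_x=\lambda(x)m_x$ together with $m_x(h_x)=1$, all the $\partial_x h_x$ terms cancel and one is left with $\lambda'(x)=m_x\big((\partial_x\mathcal{L}_{xf})h_x\big)=\lambda(x)\int f\,h_x\,dm_x=\lambda(x)\int f\,d\mu_x$, so $P'(x)=\int f\,d\mu_x=-\chi_{\mu_x}$ throughout a neighborhood of $\delta$, which at $x=\delta$ is exactly the claimed formula. (Alternatively one can stay inside thermodynamic formalism: identify $P'_{\pm}(\delta)$ with $\int f\,d\nu^{\pm}$ for equilibrium states $\nu^{\pm}$ of $\delta f$, obtained as weak-$*$ limits of the $\mu_x$ as $x\to\delta^{\pm}$, and invoke uniqueness of the equilibrium state of $\delta f$ to force $P'_-(\delta)=P'_+(\delta)$.) The only genuine obstacle is this differentiability step: over a countable alphabet one must make sure the perturbation stays within the analytic, quasi-compact regime for $\mathcal{L}_{xf}$ — equivalently, that the above weak-$*$ limits exist and remain equilibrium states — and this is precisely where strong regularity enters; the supporting-line identity and the convexity estimate are elementary.
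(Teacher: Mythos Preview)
The paper does not prove this proposition at all; it simply cites Proposition~2.6.13 of Mauldin--Urba\'nski. Your write-up, by contrast, supplies an actual argument, and a correct one: convexity of $x\mapsto P(x)$ via H\"older's inequality, the variational principle to place $-\chi_{\mu_\delta}=\int f\,d\mu_\delta$ in the subdifferential $\partial P(\delta)$, and then differentiability of $P$ at the interior point $\delta$ from the analytic dependence of the simple isolated eigenvalue $\lambda(x)=e^{P(x)}$ of $\mathcal{L}_{xf}$. The explicit computation $\lambda'(x)=\lambda(x)\int f\,d\mu_x$ by differentiating the eigen-equation and pairing with the eigenmeasure is the standard one, and is in fact essentially what the cited reference does. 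What your approach buys over a bare citation is that it makes transparent exactly where strong regularity is used (to put $\delta$ in the interior of $\Gamma$, so that the quasi-compact/analytic spectral picture of Section~3 applies in a full neighbourhood of $\delta$) and why the formula holds.

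Two minor technical points you glide over: first, in the countable-alphabet case the inequality $P(x)\ge h_{\mu_\delta}(\sigma)+x\int f\,d\mu_\delta$ for \emph{all} $x\in\Gamma$ requires $\int f\,d\mu_\delta>-\infty$; this does follow because $\mu_\delta$ is an equilibrium state (so both $h_{\mu_\delta}(\sigma)$ and $\int f\,d\mu_\delta$ are finite), but is worth saying. Second, in the eigenvalue differentiation, $f h_x$ is unbounded when $E$ is infinite, so the formal identity $(\partial_x\mathcal{L}_{xf})h_x=\mathcal{L}_{xf}(fh_x)$ must be read as a pointwise computation with the integrability of $\int f h_x\,dm_x$ checked separately (it follows from the Gibbs property and summability of $xf$ for $x$ near $\delta$); this is routine but not literally covered by the bounded-operator calculus. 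Neither point is a genuine gap.
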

\begin{proof}
We refer to proposition 2.6.13 in \citep[~ p. 47]{MU}.
\end{proof}
Next, we want to consider a spectral theory of the transfer operator. We start with considering  a family of functions $\{ sf \}$ where $s$ is usually a complex number in the right half plane $\Gamma^+=\Gamma \times \mathbb{R}$. Then definitions \ref{summable 2} and \ref{RPF} are applicable for such functions, however, definitions \ref{pressure} is not applicable as $sf$ is not real anymore unless for real $s$.\\
It is clear that when $s \in \Gamma^+$ then the series
$$\sum_{e \in E}\sup_{[e]}|\exp(sf)|=\sum_{e \in E}\exp(\operatorname{Re}(s)\sup_{[e]}f)$$
converges. Thus having a H\"{o}lder-type summable function, spectral theory of transfer operator on the right half plane $\Gamma$ makes sense. Note that 
\begin{itemize}
    \item $\mathcal{L}_{s}:=\mathcal{L}_{sf}$ is an operator on $C^{0,\alpha}(E_A^{\mathbb{N}},\mathbb{C})$ for any  $s \in \Gamma ^+$.
    \item  The pressure function $P$ is defined on $\Gamma$.
\end{itemize}
Another important property of the transfer operator to be discussed is the D-generic property. This property prohibits the possibility of admitting specific eigenvalue. We adopt its definition from \cite[~ p. 32]{UP}. Before mentioning the definition, we need an equivalency.

\begin{proposition}
The following conditions are equivalent:
\begin{itemize}
    \item[(i)] The multiplicative group $G_a(y):=<\exp(-|\omega|a)\exp(yS_{|\omega|}f(\bar{\omega})) : \omega \in E^*_{\text{per}}>$ is generated by an integer power of $e^{2\pi}$.
    \item[(ii)] $\exp(P(x)+ia)$ is an eigenvalue of $\mathcal{L}_{x+iy}: C^{0,\alpha}(E_A^{\mathbb{N}},\mathbb{C}) \rightarrow C^{0,\alpha}(E_A^{\mathbb{N}},\mathbb{C})$, for some $x \in \Gamma$.
    \item[(iii)] There exists $u \in C_b(E_A^{\mathbb{N}},\mathbb{C})$ such that $yf-a+u\circ \sigma -u \in C_b(E_A^{\mathbb{N}},2\pi \mathbb{Z})$.
\end{itemize}
\end{proposition}
\begin{proof}
We refer to Proposition 2 in \cite[~ p. 138]{P-RPF} and Proposition 2.3.5 in \citep[~ p. 32]{UP}.
\end{proof}
\begin{definition}\label{d-generic}
We say a potential $f$ is \textbf{D-generic} if one of the statements of the above proposition fails for all non-zero $y$ and $a=0$. In other words, $\mathcal{L}_{x+iy}: C^{0,\alpha}(E_A^{\mathbb{N}},\mathbb{C}) \rightarrow C^{0,\alpha}(E_A^{\mathbb{N}},\mathbb{C})$ doesn't admit $\exp(P(x))$ as eigenvalue if $y\neq 0$.\\
Further, we say the potential $f$ is \textbf{strongly D-generic}, if either of the above statements (i) or (ii) from the above proposition fails for all non-zero $y$ and all real $a$. In other words, $\mathcal{L}_{x+iy}: C^{0,\alpha}(E_A^{\mathbb{N}},\mathbb{C}) \rightarrow C^{0,\alpha}(E_A^{\mathbb{N}},\mathbb{C})$ doesn't admit any eigenvalue of magnitude $\exp(P(x))$ for any $y\neq 0$. 
\end{definition}
One can obtain an alternative statement for D-generic and strongly D-generic properties.
\begin{proposition}\label{prop. d-generic}
A potential $f$ is D-generic iff the additive subgroup generated by the following set is not cyclic.
$$\{S_{|\omega|}f(\bar{\omega}): \; \omega \in E^*_{\text{per}} \}.$$
And it is strongly D-generic iff the additive subgroup generated by the following set is not cyclic for any real $\beta$.
$$\{S_{|\omega|}f(\bar{\omega})-n\beta: \; \omega \in E^*_{\text{per}} \}.$$
\end{proposition}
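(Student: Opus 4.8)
The plan is to prove both equivalences by reducing the eigenvalue condition to a cohomological statement and then translating periodicity/cyclicity of the relevant subgroup into the existence of the required coboundary. I will concentrate on the D-generic case; the strongly D-generic case is the same argument applied to the potential $f$ twisted by a constant $\beta$ inside the exponential (i.e. replacing the family $\mathcal{L}_{x+iy}$ by its modulus-one rescaling), so I will only indicate the modifications.

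First I would recall (citing \cite{P-RPF}, Proposition 2, together with \cite[p.~32]{UP}) the standard fact that $\exp(P(x))$ is an eigenvalue of $\mathcal{L}_{x+iy}$ on $C^{0,\alpha}$ if and only if there exists a nonzero continuous (in fact H\"older-type) function $u:E_A^{\mathbb{N}}\to\mathbb{C}$ with $|u|$ bounded away from $0$ and $\infty$, and a continuous $\psi:E_A^{\mathbb{N}}\to\mathbb{R}$, such that
$$iyf = \psi\circ\sigma - \psi + c \pmod{2\pi i\,\mathbb{Z}}$$
holds along orbits, equivalently (writing $u=e^{i\psi}$ up to the unimodular part) $e^{iyf(e\rho)}\,h_x(e\rho)\,u(e\rho)$ summed over admissible $e$ equals $\exp(P(x))\,u(\rho)$, where $h_x$ is the positive eigenfunction of $\mathcal{L}_x$. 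The upshot I will isolate as a sublemma is: $\exp(P(x))$ is an eigenvalue of $\mathcal{L}_{x+iy}$ for some (equivalently, by the preceding proposition, all) $x\in\Gamma$ if and only if $yf$ is cohomologous to a function taking values in $2\pi\mathbb{Z}$ via a continuous transfer function, i.e. there is continuous $\psi$ with $yf=\psi\circ\sigma-\psi+g$ where $g(E_A^{\mathbb{N}})\subseteq 2\pi\mathbb{Z}$. The Gibbs/H\"older machinery of Section~2 (Lemma~\ref{bounded S_n}, the Gibbs estimate \ref{Gibbs}) guarantees the transfer function is automatically H\"older-type, so there is no loss in working on $C^{0,\alpha}$.

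Next I would use the Liv\v{s}ic-type closing/periodic-point criterion: a H\"older(-type) function $F$ on a finitely irreducible subshift is a continuous coboundary (up to an additive constant) taking values in a subgroup $H\le\mathbb{R}$ along orbits if and only if $S_{|\omega|}F(\bar\omega)\in H$ for every periodic word $\omega\in E^*_{\mathrm{per}}$ — because closed orbits detect cohomology classes, and finite irreducibility lets one build the transfer function by the usual density-of-backward-orbits argument (the remark after Definition~\ref{finitely irreducible} supplies $\overline{\bigcup_n\sigma^{-n}(\rho)}=E_A^{\mathbb{N}}$). Applying this with $F=yf$ and $H=2\pi\mathbb{Z}$: the eigenvalue $\exp(P(x))$ occurs for some nonzero $y$ precisely when there is $y\neq0$ with $y\cdot S_{|\omega|}f(\bar\omega)\in 2\pi\mathbb{Z}$ for all periodic $\omega$, which says exactly that the additive subgroup $G=\langle S_{|\omega|}f(\bar\omega):\omega\in E^*_{\mathrm{per}}\rangle$ is contained in $\tfrac{2\pi}{y}\mathbb{Z}$, i.e. that $G$ is cyclic (a nontrivial subgroup of $\mathbb{R}$ is cyclic iff it is discrete iff it embeds in some $c\mathbb{Z}$; the degenerate case $G=\{0\}$ is cyclic and corresponds to $yf$ itself being a coboundary, giving an eigenvalue for every $y$). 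Negating: $f$ is D-generic (no such eigenvalue for $y\neq0$, $a=0$) iff $G$ is not cyclic. For the strongly D-generic statement one repeats the argument with $a$ arbitrary, which amounts to allowing the cohomology equation $yf=\psi\circ\sigma-\psi+g$ with $g\in a+2\pi\mathbb{Z}$; evaluating on $\bar\omega$ of period $n$ contributes $na$, so one is led to ask whether $\langle S_{|\omega|}f(\bar\omega)-|\omega|\beta\rangle$ is cyclic for $\beta=a/(2\pi)\cdot(2\pi)=a$ suitably scaled — giving the stated family of subgroups over all real $\beta$.

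The main obstacle I anticipate is the "only if'' direction of the Liv\v{s}ic step in the \emph{countable-alphabet, unbounded-potential} setting: the classical Liv\v{s}ic theorem is stated for H\"older functions on compact (finite-alphabet) shifts, whereas here $f$ is merely H\"older-type and summable, hence unbounded when $E$ is infinite. I would handle this by exhausting $E$ by finite subsets $E_k\uparrow E$, running the closing-lemma construction on each finitely irreducible (after the remark, we may assume this, or pass to a finitely irreducible sub-subshift) finite subsystem to get transfer functions $\psi_k$ with uniform H\"older coefficient via Lemma~\ref{bounded S_n}, and then passing to a limit using the Arzel\`a--Ascoli argument together with the uniform Gibbs bounds; the values-in-$2\pi\mathbb{Z}$ condition is closed under this limit. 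An alternative, cleaner route that avoids re-proving Liv\v{s}ic is to cite Proposition~2.3.5 and the surrounding discussion in \cite[pp.~32--34]{UP} (and \cite[Prop.~2]{P-RPF}) for exactly this equivalence between the spectral condition and discreteness of the period group, and simply assemble the pieces; I would present the periodic-point reformulation as the substantive content and defer the analytic coboundary construction to those references.
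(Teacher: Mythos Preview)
The paper does not actually give a proof of this proposition: it is stated immediately after Definition~\ref{d-generic} and the text then moves on with ``Next we would like to bring some facts from spectral theory,'' with no argument in between. So there is nothing to compare your proposal against in the paper itself; the result is treated as a known reformulation, implicitly deferred to \cite[Prop.~2]{P-RPF} and \cite[Prop.~2.3.5]{UP}.

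Your sketch is the standard route to such a statement and is essentially correct: (i) reduce the eigenvalue condition $\mathcal{L}_{x+iy}u=e^{P(x)+ia}u$ to a cohomological equation $yf=\psi\circ\sigma-\psi+g$ with $g$ taking values in $a+2\pi\mathbb{Z}$, via the usual convexity argument for the normalised operator $\bar{\mathcal{L}}$; (ii) invoke a Liv\v{s}ic-type periodic-orbit criterion to turn this into the condition $y\,S_{|\omega|}f(\bar\omega)\in |\omega|\,a+2\pi\mathbb{Z}$ for all periodic $\omega$; (iii) observe that the existence of such a nonzero $y$ is precisely cyclicity of the relevant subgroup. You are right to flag the infinite-alphabet Liv\v{s}ic step as the only genuinely nontrivial point, and your proposed fallback---cite \cite{UP} and \cite{P-RPF} for that equivalence and just assemble the algebraic conclusion---is exactly what the paper does. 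One small wording issue: in your strongly D-generic paragraph the scaling ``$\beta=a/(2\pi)\cdot(2\pi)=a$'' is garbled; the clean statement is that the eigenvalue $e^{P(x)+ia}$ occurs for some $y\neq 0$ iff for some real $\beta$ (namely $\beta=a/y$) the group $\langle S_{|\omega|}f(\bar\omega)-|\omega|\beta\rangle$ sits inside $\tfrac{2\pi}{y}\mathbb{Z}$, i.e.\ is cyclic.
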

\begin{proof}
    We only prove the first claim. The other one can be proved in a similar manner. Assuming $f$ is not D-generic, then for some non-zero $y$ and $a=0$, statement $(i)$ holds. Then for any $\omega \in E_{\text{per}}^*$, there exists integer $k_{\omega}$ such that $yS_{|\omega|}f(\bar{\omega})=2\pi k_{\omega}$. Therefore $<S_{|\omega|}f(\bar{\omega}): \; \omega \in E^*_{\text{per}} > \subseteq <2\pi k/y : k \in \mathbb{Z}>$. Since the latter group is cyclic, so is the first one. For the converse, we assume that $<S_{|\omega|}f(\bar{\omega}): \; \omega \in E^*_{\text{per}} >$ is cyclic. Then there exists a non-zero $y$ such that $S_{|\omega|}f(\bar{\omega})=y k_{\omega}$ for any $\omega \in E_{\text{per}}^*$, where $k_{\omega} \in \mathbb{Z}$. Therefore, $G_0(2\pi / y)$ is generated by $e^{2\pi}$.
\end{proof}
Next, we would like to bring some facts from spectral theory. We mostly refer to \cite{Dunford}, \cite{TK}, \cite{Browder} or \cite{Baumg}. Assume $\mathfrak{B}$ is a Banach space, $\mathcal{L}$ a bounded operator on $\mathfrak{B}$.\\
The \textbf{spectrum} of the bounded operator $\mathcal{L}$, denoted by $\text{Sp}(\mathcal{L})$, is defined to be all the complex numbers $\zeta$ such that the operator $(\mathcal{L}-\zeta I)$ is not bijective. Further the \textbf{spectral radius} of $\mathcal{L}$ is defined to be
$$r(\mathcal{L}):=\sup \{ |\zeta| \; : \; \zeta \in \text{Sp}(\mathcal{L}) \}$$
There is an alternative expression of spectral radius known as Gelfand's formula.
$$r(\mathcal{L})=\lim_n \|\mathcal{L}^n\|^{\frac{1}{n}}.$$
Next, we mention the essential spectrum definition. We indicate that there are several other definitions of this concept in the math community, however, the radius of the essential spectrum (defined below) remains the same for all the definitions. We adapt the following definition from \citep[~ p. 107]{Browder}.
\begin{definition}\label{essential spectrum def}
The complex number $\zeta$ belongs to the \textbf{essential spectrum} of the operator $\mathcal{L}$, denoted by $\text{Sp}_{\text{ess}}(\mathcal{L})$, if at least one of the following condition holds:
\begin{itemize}
    \item[(i)] the operator $(\mathcal{L}-\zeta I)$ has a range which is not closed in $\mathfrak{B}$.
    \item[(ii)] $\cup_{i\geq 0} \text{ker}(\mathcal{L}-\zeta I)^i$ is infinite dimensional.
    \item[(iii)] the point $\zeta$ is a limit point of the spectrum of $\mathcal{L}$.
\end{itemize}
Furthermore, the \textbf{essential spectral radius} is
$$r_{\text{ess}}(\mathcal{L}):=\sup \{ |\zeta| \; : \; \zeta \in \text{Sp}_{\text{ess}}(\mathcal{L}) \}$$
\end{definition}
Nussbaum showed the essential spectral radius follows a Gelfand's type formula. Before bringing his formula we need to introduce a semi-norm. Consider, $\mathfrak{K}$ the ideal of all bounded compact operators on $\mathfrak{B}$, then 
$$\| \mathcal{L} \|_{\mathfrak{K}}:= \inf_{\mathcal{C} \in \mathfrak{K} } \|\mathcal{L}+\mathcal{C} \|,$$
defines a semi-norm on the space of bounded linear operators on $\mathfrak{B}$ \citep[~ p. 474]{Nussbaum}.
\begin{proposition}\label{ess spec rad}
$r_{\text{ess}}(\mathcal{L})=\lim_{n \to \infty} \| \mathcal{L}^n \|_{\mathfrak{K}} ^{1/n}$
\end{proposition}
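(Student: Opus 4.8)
The plan is to establish the two inequalities $r_{\mathrm{ess}}(\mathcal{L}) \le \lim_n \|\mathcal{L}^n\|_{\mathfrak{K}}^{1/n}$ and $r_{\mathrm{ess}}(\mathcal{L}) \ge \lim_n \|\mathcal{L}^n\|_{\mathfrak{K}}^{1/n}$ separately, after first checking that the limit on the right-hand side actually exists. For existence, I would observe that the sequence $a_n := \log \|\mathcal{L}^n\|_{\mathfrak{K}}$ is subadditive: since $\mathfrak{K}$ is a two-sided ideal, for compact operators $\mathcal{C}_1, \mathcal{C}_2$ the operator $(\mathcal{L}^m + \mathcal{C}_1)(\mathcal{L}^n + \mathcal{C}_2) - \mathcal{L}^{m+n}$ is compact, whence $\|\mathcal{L}^{m+n}\|_{\mathfrak{K}} \le \|\mathcal{L}^m + \mathcal{C}_1\|\,\|\mathcal{L}^n + \mathcal{C}_2\|$, and taking infima gives $a_{m+n} \le a_m + a_n$. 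Then Fekete's Lemma \ref{fekete} yields convergence of $a_n/n$ to its infimum, so $\lim_n \|\mathcal{L}^n\|_{\mathfrak{K}}^{1/n}$ exists and equals $\inf_n \|\mathcal{L}^n\|_{\mathfrak{K}}^{1/n}$.

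For the reduction to a genuine Gelfand formula, the key idea is to pass to the Calkin algebra $\mathfrak{B}(\mathfrak{B})/\mathfrak{K}$, which is a Banach algebra with quotient norm exactly $\|\cdot\|_{\mathfrak{K}}$. Writing $\pi$ for the quotient map, the spectral radius of $\pi(\mathcal{L})$ in the Calkin algebra is, by the ordinary Gelfand spectral radius formula for Banach algebras, equal to $\lim_n \|\pi(\mathcal{L})^n\|^{1/n} = \lim_n \|\mathcal{L}^n\|_{\mathfrak{K}}^{1/n}$. So it remains to identify the Calkin-algebra spectral radius of $\pi(\mathcal{L})$ with $r_{\mathrm{ess}}(\mathcal{L})$. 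This is where the Fredholm theory enters: one uses Atkinson's theorem, which says $\mathcal{L} - \zeta I$ is Fredholm precisely when $\pi(\mathcal{L} - \zeta I)$ is invertible in the Calkin algebra. Combined with the stability of the Fredholm index and the fact that a Fredholm operator of index zero with trivial generalized kernel is invertible, one gets that $\zeta \notin \mathrm{Sp}_{\mathrm{ess}}(\mathcal{L})$ exactly for those $\zeta$ in the unbounded component of the Fredholm domain, and in any case the set $\{|\zeta| : \pi(\mathcal{L} - \zeta I) \text{ not invertible}\}$ and $\mathrm{Sp}_{\mathrm{ess}}(\mathcal{L})$ have the same supremum of moduli. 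Concretely: if $|\zeta| > r_{\mathrm{ess}}(\mathcal{L})$ then $\mathcal{L} - \zeta I$ is Fredholm, so $\pi(\mathcal{L} - \zeta I)$ is invertible, giving $r(\pi(\mathcal{L})) \le r_{\mathrm{ess}}(\mathcal{L})$; conversely every $\zeta$ with $\pi(\mathcal{L}-\zeta I)$ non-invertible satisfies condition (i) of Definition \ref{essential spectrum def} (its range is not closed, or it fails to be Fredholm in a way that forces one of (i)–(iii)), giving the reverse inequality.

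I would then combine the two halves: $r_{\mathrm{ess}}(\mathcal{L}) = r\big(\pi(\mathcal{L})\big) = \lim_n \|\pi(\mathcal{L})^n\|^{1/n} = \lim_n \|\mathcal{L}^n\|_{\mathfrak{K}}^{1/n}$, which is the claim. Since the paper cites \cite{Nussbaum} for the semi-norm property, it would be cleanest simply to cite Nussbaum's original paper for the formula itself as well, or alternatively \cite{TK} or \cite{Browder} where a Calkin-algebra proof appears; I would present the argument above as a sketch and defer the Fredholm-theoretic details to those references.

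The main obstacle is the second inequality, namely showing that a point $\zeta$ at which $\pi(\mathcal{L}-\zeta I)$ fails to be invertible actually lies in $\mathrm{Sp}_{\mathrm{ess}}(\mathcal{L})$ as defined by (i)–(iii). The subtlety is that Definition \ref{essential spectrum def} uses a particular (Browder-type) notion of essential spectrum, and while the paper already remarks that all standard notions share the same radius, making that remark precise requires knowing that the various essential spectra differ at most by points in bounded components of the Fredholm domain — points which, crucially, cannot increase the radius beyond what the outermost non-Fredholm points already contribute. Handling this cleanly is the delicate step; the subadditivity and Calkin-algebra Gelfand parts are routine.
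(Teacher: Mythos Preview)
The paper's own ``proof'' of this proposition is simply a citation: it refers the reader to Nussbaum's original paper \cite[p.~477]{Nussbaum} and gives no argument. Your proposal therefore goes well beyond what the paper does.

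Your sketch is correct in outline and is the standard route: pass to the Calkin algebra $\mathfrak{B}(\mathfrak{B})/\mathfrak{K}$, apply the ordinary Gelfand formula there to get $r(\pi(\mathcal{L})) = \lim_n \|\mathcal{L}^n\|_{\mathfrak{K}}^{1/n}$, and then identify $r(\pi(\mathcal{L}))$ with $r_{\mathrm{ess}}(\mathcal{L})$ via Atkinson's theorem and Fredholm index stability. You are also right to flag the one genuinely delicate point, namely that Definition~\ref{essential spectrum def} is the Browder essential spectrum rather than the Calkin (Wolf) essential spectrum, so the two sets need not coincide; the paper itself acknowledges this just before the definition by remarking that all standard notions share the same \emph{radius}. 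Your explanation of why the radii agree --- the difference lies only in bounded components of the Fredholm resolvent, which cannot enlarge the supremum of moduli --- is exactly the right idea. Given that the paper is content to cite Nussbaum outright, your suggestion to present the Calkin-algebra argument as a sketch and defer the Fredholm-theoretic fine print to \cite{Nussbaum}, \cite{Browder}, or \cite{TK} is entirely appropriate and already more than the paper offers.
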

\begin{proof}
We refer to \citep[~ p. 477]{Nussbaum}.
\end{proof}
Next, we briefly talk about the perturbation theory of linear operators. Our main sources are \cite{TK}, \cite{Dunford} and \cite{Baumg}.\\
It is now clear from definition \ref{essential spectrum def} that for every $r$ where $r_{\text{ess}}(\mathcal{L})<r\leq r(\mathcal{L})$ we should have only finitely many $\zeta \in \text{Sp}(\mathcal{L})$ with $|\zeta|\geq r$, each of which is
isolated eigenvalue with finite algebraic multiplicity. Kato calls these finite $\zeta$'s, \textbf{finite system of eigenvalues} \citep[~ p. 181]{TK} or \citep[~ p. 363]{Baumg}. This concept shows up in \citep[~ p. 572]{Dunford} as \textbf{spectral set}. According to Schwartz-Dunford, \textbf{spectral set} is any clopen subset of the spectrum. The purpose is to obtain a perturbation theorem for a holomorphic family of operators $\mathcal{L}_s$ in complex variable $s$. The original idea of the perturbation theory of self-adjoint operators over Hilbert space goes back to Schrödinger. The first major math result in this area was obtained by Rellich. Later on Sz. Nagy and Kato independently worked on this topic to generalize Rellich's result to a general closed operator over Banach space \cite{Kato2}. Many of these results can be found in \citep[~ VII.6]{Dunford} or \citep[~ Ch. VII]{TK} or \citep[~ Ch. 10]{Baumg}. We first want to define a holomorphic family of operators. Note that there are several definitions for this but all in the context of bounded operator-valued over a fixed Banach space coincide \citep[~ 10.1]{Baumg}, \citep[~ 10.3]{Baumg}.

\begin{definition}
Let $(X, \|. \|)$ be Banach space, $\mathfrak{B}(X)$ be the space of all bounded linear operator on $X$, $G$ a region in the complex plane and $s\mapsto \mathcal{L}_s$ a function from $G$ into $\mathfrak{B}(X)$. We say $\mathcal{L}_s$ is \textbf{holomorphic} in $G$ if there exists an operator-valued function $s\mapsto \mathcal{L}'_s$ such that
$$\|\frac{\mathcal{L}_{s+h}-\mathcal{L}_s}{h}-\mathcal{L}'_s \| \rightarrow 0,$$
for all $s \in G$ and $h\rightarrow 0$.
\end{definition}
We are ready to express one major result in the perturbation theory of a holomorphic family of bounded operators.
\begin{theorem}\label{kato theorem}
Let $\mathcal{L}_s$ be a holomorphic family of bounded operators from a region $G$ into $\mathfrak{B}(X)$. Let $s_0 \in G$ and $\lambda_0,...\lambda_n$ be finite system of eigenvalues of $\mathcal{L}_{s_0}$, each of which with algebraic multiplicity $1$. Then there is small enough neighborhood of $s_0$ such that $\mathcal{L}_s$ has the spectral representation
$$\mathcal{L}_s=\sum_{i=1}^n \lambda_i(s)\mathcal{P}_{i,s}+\mathcal{D}_s, $$ 
where each $\lambda_i(s)$ is holomorphic function, $\mathcal{P}_i(s)$ is holomorphic operator-valued function and a projection, $\mathcal{D}(s)$ holomorphic operator-valued function and further 
$$\lambda_i(s_0)=\lambda_i,$$
for each $i=1,...,n.$
\end{theorem}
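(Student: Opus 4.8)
The plan is to construct the spectral projections via the Riesz functional calculus and to read off everything from them. Since $\lambda_1,\dots,\lambda_n$ form a finite system of eigenvalues of $\mathcal{L}_{s_0}$, each $\lambda_i$ is an isolated point of $\mathrm{Sp}(\mathcal{L}_{s_0})$; I would choose pairwise disjoint positively oriented circles $\gamma_i$ centered at $\lambda_i$ so small that $\gamma_i$ together with its interior meets $\mathrm{Sp}(\mathcal{L}_{s_0})$ only in $\{\lambda_i\}$. On the compact set $\bigcup_i \gamma_i$ the resolvent $(\zeta I-\mathcal{L}_{s_0})^{-1}$ exists and is bounded, say by $M$; by continuity of $s\mapsto \mathcal{L}_s$ there is a neighborhood $U$ of $s_0$ on which $\|\mathcal{L}_s-\mathcal{L}_{s_0}\|<M^{-1}$, and then a Neumann series argument shows $(\zeta I-\mathcal{L}_s)^{-1}$ exists for all $\zeta\in\bigcup_i\gamma_i$ and all $s\in U$, and is holomorphic in $s$ there (inversion being holomorphic on the open set of invertible operators, composed with the holomorphic map $s\mapsto \zeta I-\mathcal{L}_s$). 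Define
$$\mathcal{P}_{i,s}:=\frac{1}{2\pi i}\oint_{\gamma_i}(\zeta I-\mathcal{L}_s)^{-1}\,d\zeta .$$
Differentiating under the integral sign, legitimate since the integrand is holomorphic in $s$ and continuous in $\zeta$ on a compact contour, shows each $\mathcal{P}_{i,s}$ is a holomorphic operator-valued function on $U$; the standard contour manipulations give that $\mathcal{P}_{i,s}$ is a projection commuting with $\mathcal{L}_s$ and that $\mathcal{P}_{i,s}\mathcal{P}_{j,s}=0$ for $i\neq j$ because the $\gamma_i$ are disjoint.

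Second, I would show $\mathrm{rank}\,\mathcal{P}_{i,s}=1$ on a possibly smaller neighborhood. At $s_0$ this rank equals the algebraic multiplicity of $\lambda_i$, which is $1$ by hypothesis. Since $s\mapsto\mathcal{P}_{i,s}$ is norm-continuous, for $s$ close enough to $s_0$ we have $\|\mathcal{P}_{i,s}-\mathcal{P}_{i,s_0}\|<1$, and the classical lemma that two projections at distance $<1$ are conjugate by an invertible operator shows the ranges are isomorphic, so $\mathrm{rank}\,\mathcal{P}_{i,s}=1$. Consequently $\mathcal{L}_s$ maps the one-dimensional space $\mathrm{Ran}\,\mathcal{P}_{i,s}$ into itself, hence acts there as a scalar $\lambda_i(s)$; equivalently $\mathcal{L}_s\mathcal{P}_{i,s}=\lambda_i(s)\mathcal{P}_{i,s}$. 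Taking the finite-rank trace yields the closed formula $\lambda_i(s)=\mathrm{tr}(\mathcal{L}_s\mathcal{P}_{i,s})$, which exhibits $\lambda_i(s)$ as a holomorphic function of $s$, and evaluation at $s_0$ gives $\lambda_i(s_0)=\lambda_i$ because there $\mathrm{Ran}\,\mathcal{P}_{i,s_0}$ is precisely the $\lambda_i$-eigenspace.

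Finally, I would set $\mathcal{D}_s:=\mathcal{L}_s\bigl(I-\sum_{i=1}^n\mathcal{P}_{i,s}\bigr)=\mathcal{L}_s-\sum_{i=1}^n\lambda_i(s)\mathcal{P}_{i,s}$, which is holomorphic in $s$ as a finite sum of products of holomorphic operator-valued functions, giving the asserted representation $\mathcal{L}_s=\sum_{i=1}^n\lambda_i(s)\mathcal{P}_{i,s}+\mathcal{D}_s$ on the neighborhood $U$.

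I expect the main obstacle to be the uniform-in-$s$ control needed at the outset: guaranteeing that the fixed contours remain in the resolvent set of $\mathcal{L}_s$ for every $s$ in one common neighborhood, and that the resolvent is jointly well-behaved enough to differentiate under the integral, together with the rank-stability lemma for nearby projections. Once these two points are secured, the holomorphy of $\lambda_i(s)$, $\mathcal{P}_{i,s}$, $\mathcal{D}_s$ and the algebraic identities among them follow from routine Riesz-calculus bookkeeping, and a careful write-up can cite \citep[Ch. VII]{TK}, \citep[VII.6]{Dunford}, or \citep[Ch. 10]{Baumg} for these standard ingredients.
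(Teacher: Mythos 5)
Your proof is correct, and it is essentially the standard Riesz-calculus argument. Note that the paper does not supply its own proof of this classical theorem: it only sketches the idea (``reduce to the case where $X$ is finite dimensional and then apply finite-dimensional perturbation theory'') and cites Kato, Baumg\"artel and Dunford--Schwartz. Your construction --- Riesz projections $\mathcal{P}_{i,s}$ via contour integrals, holomorphy by differentiation under the integral after a Neumann-series argument keeps the fixed contours in the resolvent set for all $s$ in a common neighborhood, rank stability via the nearby-projections lemma, and then the eigenvalue extracted algebraically from the rank-one projection --- is exactly what those references do, streamlined by the simple-eigenvalue hypothesis, which lets you bypass the Puiseux/branch-point analysis required for higher multiplicities. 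The one step you should make explicit is the holomorphy of $s\mapsto\operatorname{tr}(\mathcal{L}_s\mathcal{P}_{i,s})$: the trace is not continuous with respect to operator norm on the space of all finite-rank operators, but since the rank here is uniformly $1$ (so differences have rank at most $2$, where $|\operatorname{tr}|\le 2\|\cdot\|$), holomorphy does follow; alternatively, fix $v_0$ and $\phi_0$ with $\phi_0(\mathcal{P}_{i,s_0}v_0)\ne 0$ and write $\lambda_i(s)=\phi_0(\mathcal{L}_s\mathcal{P}_{i,s}v_0)/\phi_0(\mathcal{P}_{i,s}v_0)$, which is visibly holomorphic near $s_0$. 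With that remark added, your write-up stands on its own.
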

In general, if the multiplicity of an eigenvalue is higher than $1$ the eigenvalues may have algebraic singularities at $s_0$. The idea of the proof is first reducing it to the case where $X$ is finite-dimensional and then one can apply perturbation theory of holomorphic operators in finite dimension. For a detailed proof, first see theorem 1 in \citep[~ p. 367]{Baumg}, then theorem 1 in \citep[~ p. 243]{Baumg}, \citep[~ p. 129]{Baumg} and \citep[~ p. 131]{Baumg}. Another source of proof for the general form of the result is theorem 9 in \citep[~ p. 587]{Dunford}. As well theorem 1.8 in \citep[~ p. 370]{TK} provides a proof.\\
By projection in the above theorem, we mean an operator with property
$$\mathcal{P}_i^2=\mathcal{P}_i.$$
Next, we would like to see how the above spectral representation of operators is related to the spectral decomposition of operators. The following proposition is a consequence of the celebrated spectral mapping theorem \citep[~ p. 209]{Conway}.
\begin{proposition}\label{spectral mapping theorem}
Suppose $\mathfrak{B}(X)$ is a Banach algebra of operators on the Banach space $X$. Let $\mathcal{L} \in \mathfrak{B}(X)$. Further, assume the spectrum of $\mathcal{L}$ can be written as 
$$\text{Sp}(\mathcal{L})=F_1\cup F_2,$$
for disjoint nonempty closed sets $F_1,F_2$. Then there is a nontrivial idempotent $\mathcal{E} \in \mathfrak{B}(X)$ such that
\begin{itemize}
    \item if $\mathcal{B}\mathcal{L}=\mathcal{L}\mathcal{B}$, then $\mathcal{B}\mathcal{E}=\mathcal{E}\mathcal{B}$.
    \item if $\mathcal{L}_1=\mathcal{L}\mathcal{E}$ and $\mathcal{L}_2=\mathcal{L}(1-\mathcal{E})$, then $\mathcal{L}=\mathcal{L}_1+\mathcal{L}_2$ and $\mathcal{L}_1\mathcal{L}_2=\mathcal{L}_2\mathcal{L}_1=0$.
    \item $\text{Sp}(\mathcal{L}_1)=F_1\cup \{0\}$, $\text{Sp}(\mathcal{L}_2)=F_2\cup\{0\}$.
\end{itemize}
\end{proposition}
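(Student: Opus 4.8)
The plan is to derive this proposition directly from the holomorphic functional calculus, which is the standard route to the Riesz projection attached to a spectral set. Since $\mathrm{Sp}(\mathcal{L}) = F_1 \cup F_2$ with $F_1, F_2$ disjoint nonempty closed sets, and the spectrum is compact, both $F_1$ and $F_2$ are compact. Hence there exist disjoint open sets $U_1 \supseteq F_1$ and $U_2 \supseteq F_2$ in $\mathbb{C}$. First I would choose a cycle (a finite union of positively oriented rectifiable Jordan curves) $\gamma$ contained in $U_1$ that has winding number $1$ around every point of $F_1$ and winding number $0$ around every point of $F_2$; this is possible because $F_1$ is compact and contained in the open set $U_1$. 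Then define
\begin{equation}\label{riesz-proj}
\mathcal{E} := \frac{1}{2\pi i}\int_{\gamma} (\zeta I - \mathcal{L})^{-1}\, d\zeta,
\end{equation}
which makes sense because the resolvent $\zeta \mapsto (\zeta I - \mathcal{L})^{-1}$ is holomorphic on the resolvent set, and $\gamma$ avoids $\mathrm{Sp}(\mathcal{L})$ by construction.

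The key steps are then the following. (1) Show $\mathcal{E}^2 = \mathcal{E}$: this is the classical resolvent-identity computation, writing the product of two such integrals over slightly separated cycles $\gamma, \gamma'$, applying the first resolvent identity $(\zeta I - \mathcal{L})^{-1}(\zeta' I - \mathcal{L})^{-1} = \big((\zeta I - \mathcal{L})^{-1} - (\zeta' I - \mathcal{L})^{-1}\big)/(\zeta' - \zeta)$, and evaluating the resulting iterated Cauchy integrals; the winding-number bookkeeping is what makes it idempotent. (2) Show $\mathcal{E}$ commutes with anything commuting with $\mathcal{L}$: if $\mathcal{B}\mathcal{L} = \mathcal{L}\mathcal{B}$ then $\mathcal{B}$ commutes with every resolvent $(\zeta I - \mathcal{L})^{-1}$, and hence with the integral \eqref{riesz-proj}; in particular taking $\mathcal{B} = \mathcal{L}$ itself gives $\mathcal{E}\mathcal{L} = \mathcal{L}\mathcal{E}$. (3) With $\mathcal{L}_1 = \mathcal{L}\mathcal{E}$, $\mathcal{L}_2 = \mathcal{L}(I - \mathcal{E})$, the decomposition $\mathcal{L} = \mathcal{L}_1 + \mathcal{L}_2$ is immediate, and $\mathcal{L}_1\mathcal{L}_2 = \mathcal{L}^2\mathcal{E}(I-\mathcal{E}) = 0 = \mathcal{L}_2\mathcal{L}_1$ follows from $\mathcal{E}^2 = \mathcal{E}$ together with the commutation $\mathcal{L}\mathcal{E} = \mathcal{E}\mathcal{L}$. (4) The spectral statement: the range $\mathrm{ran}(\mathcal{E})$ and kernel $\ker(\mathcal{E})$ are closed $\mathcal{L}$-invariant subspaces giving a topological direct sum $X = \mathrm{ran}(\mathcal{E}) \oplus \ker(\mathcal{E})$, and by the standard Riesz-decomposition theorem the spectrum of $\mathcal{L}$ restricted to $\mathrm{ran}(\mathcal{E})$ is exactly $F_1$ and on $\ker(\mathcal{E})$ is exactly $F_2$. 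Viewing $\mathcal{L}_1$ as the operator that acts as $\mathcal{L}|_{\mathrm{ran}(\mathcal{E})}$ on the first summand and as $0$ on the second, one gets $\mathrm{Sp}(\mathcal{L}_1) = F_1 \cup \{0\}$, and symmetrically $\mathrm{Sp}(\mathcal{L}_2) = F_2 \cup \{0\}$ (the $\{0\}$ appearing because each $\mathcal{L}_i$ is identically zero on a nonzero complementary subspace, so $0$ is always in its spectrum). Nontriviality of $\mathcal{E}$ (i.e. $\mathcal{E} \neq 0, I$) follows since $F_1, F_2$ are both nonempty, so neither summand is zero.

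I would cite the spectral mapping / Riesz functional calculus machinery from Conway as indicated, and present steps (1)–(4) compactly rather than reproving the functional calculus from scratch. The main obstacle — really the only delicate point — is step (1), the idempotence computation: one must be careful to take two cycles $\gamma$ and $\gamma'$ with $\gamma'$ enclosing $\gamma$ (both still separating $F_1$ from $F_2$), so that in the double integral the inner variable sees winding number $1$ from one cycle and $0$ or $1$ from the other in just the right pattern; sloppiness there produces $\tfrac12\mathcal{E}$ or $0$ instead of $\mathcal{E}$. Everything else is bookkeeping with the resolvent identity and the elementary algebra of a bounded idempotent. An alternative, if one prefers to avoid contour integrals entirely, is to invoke the holomorphic functional calculus as a black box and set $\mathcal{E} = \chi(\mathcal{L})$ where $\chi$ is the locally constant function equal to $1$ on a neighborhood of $F_1$ and $0$ on a neighborhood of $F_2$; then $\chi^2 = \chi$ forces $\mathcal{E}^2 = \mathcal{E}$ by multiplicativity of the calculus, the commutation is automatic, and the spectral identities follow from the spectral mapping theorem applied to $\chi$, with $0 \in \mathrm{Sp}(\mathcal{L}_i)$ because the relevant coordinate function vanishes somewhere on $\mathrm{Sp}(\mathcal{L})$. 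I would likely present this second, cleaner version as the main proof and relegate the contour description to a remark.
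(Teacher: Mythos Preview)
Your proposal is correct and is exactly the standard Riesz functional calculus argument; the paper does not give its own proof but simply cites Conway \cite[p.~209]{Conway}, where precisely this construction (the Riesz projection $\mathcal{E}=\chi(\mathcal{L})$ via the holomorphic functional calculus) is carried out. So your approach is the same as the referenced one, just spelled out in more detail than the paper does.
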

\begin{definition}
We first consider directed multi-graph $(V,E,i,t)$ and an incidence matrix $A: E\times E \rightarrow \{0,1\}$, where $V$ is the finite set of vertices, $E$ is the countable (finite or infinite) set of directed edges and $i,t$ (initial and tail) are functions 
$$i,t: E\rightarrow V,$$
such that 
$$A_{ab}=1\; \; \Rightarrow \; \; t(a)=i(b)$$
In addition, we have a finite family of Euclidean compact metric spaces $\{X_v\}_{v \in V}$ and countable family of contractions $\{\phi_e\}_{e \in E}$ and $\kappa \in (0,1)$ such that
$$|\phi_e(x)-\phi_e(y)| \leq \kappa |x-y|,$$
for all $e \in E$ and $x,y \in X_{t(e)}$. Then 
$$\mathcal{S}=\{\phi_e: X_{t(e)} \rightarrow X_{i(e)} \}_{e \in E}$$
is called attracting \textbf{graph directed Markov system}.
\end{definition}
We extend the functions $i,t: E \rightarrow V$ in a natural way to $E_A^*$ as follows:
$$t(\omega):=t(\omega_n), \; \; \; i(\omega):=i(\omega_1).$$
If $\omega \in E_A^n$ we define:
$$\phi_{\omega}=\phi_{\omega_1}\circ ...\circ \phi_{\omega_n}:X_{t(\omega)} \rightarrow X_{i(\omega)}.$$
Now for any $\rho \in E_A^{\mathbb{N}}$ the sets $\{\phi_{\rho_1\rho_2...\rho_n}(X_{t(\rho_n)})\}_{n\geq 1}$ form a descending sequence of non-empty compact sets and therefore $\cap_{n\geq 1} \phi_{\rho_1\rho_2...\rho_n}(X_{t(\rho_n)})$ is non-empty. Further since 
$$\mathrm{diam}(\phi_{\rho_1\rho_2...\rho_n}(X_{t(\rho_n)})) \leq \kappa^n \mathrm{diam}(X_{t(\rho_n)}) \leq \kappa^n max\{\mathrm{diam}(X_v)\}_{v \in V},$$
we find that this intersection is actually a singleton and we denote it by $\pi(\rho)$, in this way we have defined a map 
$$\pi: E_A^{\mathbb{N}} \rightarrow \sqcup_{v \in V}	X_v,$$
where $\sqcup_{v \in V}	X_v$ is the disjoint union of the compact spaces $\{X_v\}_v$.
\begin{definition}
The set 
$$J=\pi(E_A^{\mathbb{N}})$$
is called the \textbf{limit set} of system $\mathcal{S}$.
\end{definition}
\begin{definition}\label{conformal condition}
We call a graph directed Markov system \textbf{conformal} if the following conditions are satisfied for some $d \in \mathbb{N}$:
\begin{enumerate}
    \item[(a)] For every $v \in V$, $X_v$ is compact connected subset of $\mathbb{R}^d$ and $X_v=\overline{\text{Int}(X_v)}$.
    \item[(b)] (Open Set Condition) For all different $e, e' \in E$,
    $$\phi_e(\text{Int}(X_{t(e)})\cap \phi_{e'}(\text{Int}(X_{t(e')})=\emptyset.$$
    \item[(c)] (Conformality) For every $v \in V$ there is an open connected $W_v$ containing $X_v$. Further for each $e \in E$, $\phi_e$ extends to a $C^1$ conformal diffeomorphism from $W_{t(e)}$ into $W_{i(e)}$ with Lipschitz constant bounded by $\kappa$.
    \item[(d)] (Bounded Distortion Property) There are two constants $L\geq 1$ and $\alpha> 0$ such that for every $e \in E$ and every $x, y \in X_{t(e)}$
    $$\left|\frac{|\phi_e'(y)|}{|\phi_e'(x)|}-1\right| \leq L\|y-x\|^{\alpha},$$
    where $|\phi_e'(x)|$ denotes the scaling factor of the derivative of $\phi_e'$ at $x$.
\end{enumerate}
\end{definition}
To a conformal graph directed Markov system we assign a real-valued function by
$$f: E_A^{\mathbb{N}} \rightarrow \mathbb{R}, \; \; \; f(\rho)=\log |\phi_{\rho_1}'(\pi(\sigma\rho))|.$$
Also known as \textbf{potential}.\\
Before finishing the section, we mention two main Tauberian theorems needed later on. First Ikehara \& Wiener's theorem \citep[p. ~127]{wiener} and then Graham \& Vaaler's theorem \citep[p. ~294]{Vaaler} which is just a refinement of the Ikehara-Wiener theorem. The motivation for the Ikehara-Wiener theorem was to provide a simpler proof of the prime number theorem. We know that PNT was proved in the late 19$^{th}$ century. However, Ikehara \& Wiener used a theorem of Wiener to obtain the following result in the early 1930s that implies PNT \citep[~ p. 127]{Korevaar}.
\begin{theorem}[Ikehara-Wiener]\label{ikehara-wiener delta}
Let $\alpha(T)$ be a monotone increasing function continuous from right such that 
$$\eta(s)=\int _0 ^{\infty} e^{-sT}d\alpha(T)$$
converges for $\operatorname{Re}(s) > \delta>0$. If 
$$\eta(s)-\frac{A}{s-\delta}=g(s)$$
has continuous extension to $\operatorname{Re}(s) =\delta$, then 
$$e^{-\delta T}\alpha(T) \rightarrow \frac{A}{\delta}, \; \; \; \text{as} \; \; T \rightarrow \infty.$$
\end{theorem}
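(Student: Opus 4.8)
The plan is to reduce the statement to the classical Ikehara Tauberian theorem by a linear change of variables and then, if a self-contained argument is wanted, to reproduce the Wiener-kernel proof. First I would normalize: set $\beta(T) = e^{-\delta T}\alpha(T)$ and $\xi(s) = \eta(s+\delta)$, so that $\xi(s) = \int_0^\infty e^{-sT}\,d\alpha(T)$ converges for $\operatorname{Re}(s)>0$ and $\xi(s) - A/s$ extends continuously to $\operatorname{Re}(s)=0$. The claim becomes $\beta(T)\to A/\delta$, i.e. $e^{-\delta T}\alpha(T)\to A/\delta$. Thus it suffices to treat the case $\delta = 0^+$ in the sense that the Dirichlet-type integral $\int_0^\infty e^{-sT}\,d\alpha(T)$ is analytic on $\operatorname{Re}(s)>0$ with a simple pole of residue $A$ pushed to the boundary after subtraction; this is exactly the hypothesis of the standard Ikehara–Wiener theorem as found in \cite{wiener} or \cite{Korevaar}, and one simply cites it.

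If instead one wants to carry out the proof, the key steps in order are: (1) Write $\alpha$ as the distribution function of a positive measure $d\alpha$ and introduce the auxiliary function $h(T) = \int_0^T e^{-\delta u}\,d\alpha(u) = \beta(T) + \delta\int_0^T \beta(u)\,du$ type manipulation, reducing the monotone-function statement to convergence of a convolution. (2) Fix a Wiener kernel: take $K$ with $\widehat{K}$ supported in $[-\lambda,\lambda]$, nonnegative, with $\int K = 1$ — concretely the Fejér kernel — and consider the smoothed average $\int \beta(T - u) K_\lambda(u)\,du$. (3) Use the hypothesis that $g(s) = \eta(s) - A/(s-\delta)$ extends continuously to the line $\operatorname{Re}(s) = \delta$ to justify, via Fourier inversion on the finite interval $[-\lambda,\lambda]$ (where continuity alone suffices because the kernel transform has compact support), that the smoothed averages of $\beta$ converge to $A/\delta$. (4) Invoke monotonicity of $\alpha$: since $e^{-\delta T}\alpha(T)$ is "almost monotone" (it is $e^{-\delta T}$ times an increasing function), sandwich $\beta(T)$ between smoothed averages over small windows and let the window shrink after $T\to\infty$, obtaining $\limsup \beta(T) \le A/\delta \le \liminf \beta(T)$.

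The main obstacle is step (3): passing from continuity of $g$ on the critical line to an actual statement about the decay/averaging of $\beta$. This is precisely where Wiener's Tauberian theorem (or, in the compactly-supported-transform version, a direct Fourier-inversion argument) does the work, and one must be careful that only continuity — not integrability or smoothness — of $g$ on $\operatorname{Re}(s)=\delta$ is assumed, so the argument must be localized to kernels whose Fourier transforms have compact support; the Riemann–Lebesgue-type vanishing then comes for free from continuity on a compact interval. The cleanest write-up simply states the reduction in the first paragraph above and cites \citep[p.~127]{wiener}; since the paper only needs the refined Graham–Vaaler version (Theorem \ref{Garahm-Vaaler}) for the main results, I would keep the proof here to the one-line reduction plus citation.
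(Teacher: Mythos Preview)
Your proposal is correct, and in fact aligns with what the paper does: the paper gives no proof of Theorem~\ref{ikehara-wiener delta} at all, simply stating it as a classical result with citations to \cite{wiener} and \cite{Korevaar}, exactly as your final recommendation (``one-line reduction plus citation'') suggests.

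One small difference worth noting: the paper does indicate, in the remark following Theorem~\ref{Garahm-Vaaler}, an alternative route you do not mention---namely that Ikehara--Wiener is recovered from Graham--Vaaler by letting $y_0\to\infty$, since then $c_\delta \to 1/\delta$ and the upper and lower bounds coincide. This is arguably the more natural justification in the context of the paper, since Graham--Vaaler is the tool actually invoked in the main results. Your normalization $\xi(s)=\eta(s+\delta)$ is fine as a heuristic, but as you yourself flag, it lands at a boundary case ``$\delta=0^+$'' that is not literally the hypothesis of the standard statement; if you want a clean reduction to the $\delta=1$ version, rescaling $T\mapsto T/\delta$ (equivalently $s\mapsto \delta s$) is tidier and avoids that wrinkle. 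Either way, citing the literature is what the paper does, so your plan is appropriate.
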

In the 1980s, Graham \& Vaaler on their way to study extremal (minorant and majorant) functions in Fourier analysis for some special classes of functions, obtained a refinement of the Ikehara-Wiener theorem as a corollary. One may want to know that the early work in the construction of extremal functions was done by Beurling and later on by Selberg (unpublished). For the proof of the following result see \citep[p. ~294]{Vaaler}.
\begin{theorem}[Graham-Vaaler]\label{Garahm-Vaaler}
Let $\alpha$ be a Borel measure on $[0,\infty)$ and that the Laplase-Stieltjes transform 
$$\eta(s)=\int_{0^-}^{\infty} e^{-sT}\text{d}\alpha(T), \; \; \; \; \; \; \; s=x+2\pi i y,$$
exists for $\operatorname{Re}(s)>\delta$. Suppose that for some number $y_0>0$, there is a constant $A>0$ such that the analytic function $\eta(s)-A/(s-\delta)$ extends to a continuous function on the set $\{\delta+2\pi iy: |y| < y_0 \}$. Then 
\begin{align*}
Ay_0^{-1}\{\exp(\delta y_0^{-1})-1\}^{-1}   & \leq \liminf_{T \to \infty} e^{-\delta T}\alpha[0,T] \\
 & \leq \limsup_{T \to \infty}  e^{-\delta T}\alpha[0,T] \\
 & \leq Ay_0^{-1}\{\exp(\delta y_0^{-1})-1\}^{-1}\exp(\delta y_0^{-1})
\end{align*}
\end{theorem}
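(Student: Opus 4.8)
The plan is to recast the statement as an Ikehara--Wiener-type convolution asymptotic for a fixed family of \emph{band-limited} test functions, and then to trap the discontinuous kernel that actually builds $\alpha[0,T]$ between a Beurling--Selberg extremal majorant and minorant. First I would set $d\beta(u):=e^{-\delta u}\,d\alpha(u)$, a nonnegative Borel measure on $[0,\infty)$, and $h(v):=e^{-\delta v}\mathbbm{1}_{[0,\infty)}(v)$. Writing $e^{-\delta T}=e^{-\delta(T-u)}e^{-\delta u}$ inside the integral and using Fubini gives, for every $T>0$,
$$e^{-\delta T}\,\alpha[0^-,T]=\int_{[0^-,\infty)}h(T-u)\,d\beta(u)=:(h*\beta)(T),$$
while the Laplace--Stieltjes transform of $\beta$ is $\widehat\beta(w):=\int_{[0^-,\infty)}e^{-wu}\,d\beta(u)=\eta(\delta+w)$, convergent for $\operatorname{Re}(w)>0$; the hypothesis says precisely that $w\mapsto\widehat\beta(w)-A/w=g(\delta+w)$ extends continuously up to the open segment $\operatorname{Re}(w)=0$, $|\operatorname{Im}(w)|<2\pi y_0$.

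Next I would establish a band-limited Tauberian lemma: for every $\phi\in L^1(\mathbb{R})$ with $\phi(v)=O((1+|v|)^{-2})$ and with $\widehat\phi$ supported in a compact subset of $(-y_0,y_0)$ (frequencies normalised by $s=\delta+2\pi iy$), one has
$$\lim_{T\to\infty}(\phi*\beta)(T)=A\int_{-\infty}^{\infty}\phi(v)\,dv .$$
This is the Ikehara--Wiener theorem (Theorem~\ref{ikehara-wiener delta}) localised in frequency. Positivity of $\alpha$ supplies the Tauberian side condition: from $\widehat\beta(\varepsilon)\sim A/\varepsilon$ as $\varepsilon\downarrow0$ one gets $\beta[0,T]=O(T)$, and an integration by parts shows the leading linear term cancels in $(h*\beta)(T)=\int_{[0^-,T]}e^{-\delta(T-u)}\,d\beta(u)$, so that $T\mapsto e^{-\delta T}\alpha[0^-,T]$ is bounded. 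Wiener's Tauberian theorem, applied only to kernels band-limited to the open segment on which $g$ is continuous (after an $\varepsilon\downarrow0$ regularisation and Riemann--Lebesgue), then upgrades boundedness to the displayed limit.

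The kernel $h$ is not band-limited, so I would import the Graham--Vaaler extremal functions: entire functions $h^{-}_{y_0}\le h\le h^{+}_{y_0}$ on $\mathbb{R}$, both integrable and of exponential type $2\pi y_0$ (so $\widehat{h^{\pm}_{y_0}}$ is supported in $[-y_0,y_0]$), with
$$\int_{-\infty}^{\infty}h^{+}_{y_0}=\frac{1}{y_0}\sum_{n\ge0}e^{-\delta n/y_0},\qquad\int_{-\infty}^{\infty}h^{-}_{y_0}=\frac{1}{y_0}\sum_{n\ge1}e^{-\delta n/y_0},$$
the values coming from the quadrature identity $\int\psi=\tfrac1{y_0}\sum_{n}\psi(n/y_0)$ for $\psi$ band-limited to $[-y_0,y_0]$ together with the interpolation $h^{\pm}_{y_0}(n/y_0)=h(n/y_0)$ at the nodes. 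Since $\beta\ge0$, convolution is order preserving, so $(h^{-}_{y_0'}*\beta)(T)\le(h*\beta)(T)\le(h^{+}_{y_0'}*\beta)(T)$ for every $T$ and every $y_0'<y_0$; applying the lemma to $h^{\pm}_{y_0'}$ (whose band $[-y_0',y_0']$ is a compact subset of $(-y_0,y_0)$) and then letting $y_0'\uparrow y_0$, using continuity of the integrals in the type, yields
$$A\int_{-\infty}^{\infty}h^{-}_{y_0}\le\liminf_{T\to\infty}e^{-\delta T}\alpha[0^-,T]\le\limsup_{T\to\infty}e^{-\delta T}\alpha[0^-,T]\le A\int_{-\infty}^{\infty}h^{+}_{y_0}.$$
Summing the geometric series, $\tfrac1{y_0}\sum_{n\ge0}e^{-\delta n/y_0}=y_0^{-1}\{e^{\delta/y_0}-1\}^{-1}e^{\delta/y_0}$ and $\tfrac1{y_0}\sum_{n\ge1}e^{-\delta n/y_0}=y_0^{-1}\{e^{\delta/y_0}-1\}^{-1}$; since $e^{-\delta T}\alpha[0,T]$ differs from $e^{-\delta T}\alpha[0^-,T]$ by $e^{-\delta T}\alpha(\{0\})\to0$, this is exactly the asserted pair of inequalities.

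The main obstacle is Step~2 together with the extremal input of Step~3. Step~2 is the genuine Tauberian work: making the frequency-localised form of Wiener's theorem precise and carrying out the $\varepsilon\downarrow0$ passage that turns continuity of $g$ on the critical segment into the convolution limit — it is here that the restriction $|y|<y_0$ forces one to shrink the band and hence to accept a two-sided estimate rather than a true asymptotic. Step~3 rests on the Beurling--Selberg construction of the extremal functions for $v\mapsto e^{-\delta v}\mathbbm{1}_{[0,\infty)}(v)$ (Paley--Wiener plus explicit interpolation at $\{n/y_0\}$) and on the quadrature identity collapsing their integrals to the geometric series above; this I would cite wholesale from Graham--Vaaler rather than reconstruct.
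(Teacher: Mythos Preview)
The paper does not prove this theorem at all; it merely states it and refers the reader to Graham--Vaaler's original article \cite[p.~294]{Vaaler} for the proof. So there is no in-paper argument to compare against.

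Your sketch is essentially the Graham--Vaaler argument itself: rewrite $e^{-\delta T}\alpha[0,T]$ as a convolution $(h*\beta)(T)$ with $h(v)=e^{-\delta v}\mathbbm{1}_{[0,\infty)}(v)$, prove a band-limited Wiener--Ikehara lemma for kernels of exponential type $<2\pi y_0$, and then sandwich $h$ between the Beurling--Selberg extremal minorant and majorant of type $2\pi y_0$, whose integrals collapse via the sampling/quadrature identity to the two geometric series that give the stated bounds. That is precisely the machinery of the cited paper, so your proposal is faithful to the intended proof rather than an alternative route.
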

\begin{remark}
It is worth noting that
\begin{itemize}
\item If $\eta(s)-A/(s-\delta)$ has continuous extension to the whole line $x=\delta$ then we may let $y_0 \to \infty$, this implies Ikehara-Wiener theorem \ref{ikehara-wiener delta}.
\item Graham \& Vaaler or Korevaar \citep[p. 30]{Korevaar} assumed that $A$ should be positive or non-negative. But since $\eta$ is real non-negative on the real line, this assumption can be relaxed, i.e. $A$ can be any complex number. Then one can see it has to be real non-negative. Furthermore, it is clear that for us the  measure $\alpha$ (possibly infinite measure) is just taken to be the Borel measure generated by the right continuous, increasing function $N_{\rho}(B,T)$, see \ref{eq1} and \citep[Thm 1.16]{Folland}. Moreover, Graham \& Vaaler provide an example to show their bounds are both sharp.
\end{itemize}
\end{remark} 
\begin{example}\label{example alpha different}
Consider the iterated function system where in the multi-graph $(V, E, i, t)$, $V$ is Singleton $\{v\}$, $E$ is finite $i=t$ are maps from $E$ to the only element of $V$ and the mapping $A: E\times E \rightarrow \{0,1\}$ is just constant $1$. This is an iterated function system. Then for the conformal graph directed Markov systems, we consider $X_v=[0,1]$ and 
$$\phi_e(t)=\alpha_et+\beta_e,$$
where $\alpha_e, \beta_e$ are chosen appropriately enough from $(0,1)$ so that we have all conditions for conformal graph directed Markov system satisfied, see definition \ref{conformal condition}. Then we know from below the definition \ref{conformal condition} the potential is
$$f(\rho)=\log |(\phi_{\rho_1})'\left(\pi(\sigma \rho)\right)|=\log \alpha_{\rho_1}$$
and 
$$S_nf(\rho)=\sum_{e \in E} n_e \log \alpha_e,$$
where $n_e$ is just number of letter $e$ appearing in the word $\rho_1...\rho_n$.  We can find the pressure:
\begin{equation}\label{eqr 6}
P(x)=\lim_{n}\frac{1}{n}\log \sum_{|\omega|=n}\|\phi_{\omega}'\|^x=\log (\sum_{e \in E}\alpha_e^x)
\end{equation}
As well we know the following should hold
\begin{equation}\label{eqr 10}
m_x([e\omega_1...\omega_n])=
\exp (-P(x)) \int_{[\omega_1...\omega_n]} |(\phi_{e})'\left(\pi(\tau)\right)|^x dm_x(\tau),
\end{equation}
for $e \in E$, Gibbs state $m_x$ and pressure $P(x)$. This actually leaves 
$$\dfrac{m_x([e\omega_1...\omega_n])}{m_x([\omega_1...\omega_n])}=\dfrac{\alpha_e^x}{\sum_{e \in E}\alpha_e^x}$$
\end{example}
\begin{example}\label{alpha_i constant}
Consider an iterated function system containing conformal maps 
$$\phi_e(t)=\alpha t+\beta_e , \; \; \;   \alpha, \beta_e \in (0,1),$$
where $\beta_e$ are appropriate enough for conformal conditions, see Definition \ref{conformal condition}, $E=\{0,1,...,k-1\}$, with some irreducible incidence matrix $A$. Then we know for this system we should have the potential 
$$f(\omega)=\log |(\phi_{\omega_1})'(\pi(\sigma \omega)|=\log \alpha$$ 
and a Gibbs state has the form
\begin{equation}
m_x([e\omega_1...\omega_n])=
\exp (-P(x)) \int_{[\omega_1...\omega_n]} |(\phi_{e})'\left(\pi(\tau)\right)|^x dm_x(\tau)
\end{equation}
for appropriate $e \in E$, Gibbs state $m_x$ and pressure $P(x)$. This actually yields 
$$P(x)=\log \left( m_x([\omega_1...\omega_n])/m_x([e\omega_1...\omega_n]) \right)+x\log \alpha$$
Note that the first term of the above sum does not depend on $t$ and it is actually equal to $\lim_n \log \#E_A^n/n=\log r(A)$, see Proposition \ref{spectral radius of A}, where r(A) is the spectral radius of the incidence matrix A, we can see this simply by the pressure formula:
\begin{equation}\label{Pp}
P(x)=\lim_{n}\frac{1}{n}\log \sum_{\omega \in E_A^n}\|\phi_{\omega}'\|^x=\lim_{n}\frac{ \log \#E_A^n}{n}+x \log \alpha =\log r(A)+x\log \alpha.
\end{equation}
\end{example}

\section{Spectral Analysis of Transfer Operator}\label{3}

In this section we assume we have a summable strongly regular H\"{o}lder-type function (potential) $f: E_A^{\mathbb{N}} \to \mathbb{R}$ with $P(f)=P(1)=0$. We recall $C^{0,\alpha}(E_A^{\mathbb{N}},\mathbb{C})$ is Banach space of H\"{o}lder continuous complex-valued functions over $E_A^{\mathbb{N}}$, and $\mathfrak{B}:=\mathfrak{B}\left(C^{0,\alpha}(E_A^{\mathbb{N}},\mathbb{C})\right)$ is Banach space of all bounded linear operators over $C^{0,\alpha}(E_A^{\mathbb{N}},\mathbb{C})$. For every $s \in \Gamma^+$ it was stated in the previous section that $\mathcal{L}_s$ belongs to $\mathfrak{B}$. One major step is to establish holomorphy of the operator $\mathcal{L}_s$.
\begin{lemma}
For every $n\in \mathbb{N}$, the operator-valued function $s \mapsto \mathcal{L}_s ^n$ is holomorphic on $\Gamma ^+$.
\end{lemma}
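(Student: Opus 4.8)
The plan is to establish holomorphy of $s \mapsto \mathcal{L}_s^n$ by first treating $n=1$ and then observing that products of holomorphic operator-valued functions are holomorphic. For $n=1$, the natural candidate for the derivative is the operator
$$\mathcal{L}'_s(g)(\rho) = \sum_{e \in E_\rho} f(e\rho)\exp(sf(e\rho))\,g(e\rho),$$
obtained by formally differentiating the defining series for $\mathcal{L}_s$ term by term. So the first step is to verify that this series actually defines a bounded operator on $C^{0,\alpha}(E_A^{\mathbb{N}},\mathbb{C})$; here I would use that $\operatorname{Re}(s) \in \Gamma$ (indeed a slightly smaller half-line, since $s$ lies in the interior $\Gamma^+$) together with the summability of $xf$ for $x$ just below $\operatorname{Re}(s)$, so that the extra polynomially-growing factor $f(e\rho)$ (which tends to $-\infty$ linearly in the relevant regime) is absorbed. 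The estimate $|f(e\rho)e^{sf(e\rho)}| \le C_\epsilon e^{(\operatorname{Re}(s)-\epsilon)f(e\rho)}$ for a suitable $\epsilon>0$, valid because $t e^{t} \to 0$, reduces boundedness of $\mathcal{L}'_s$ on the sup-norm to summability at the exponent $\operatorname{Re}(s)-\epsilon$; the Hölder-type seminorm bound is handled the same way using Lemma \ref{bounded S_n}-type distortion estimates for the single-symbol sums.

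The second step is the actual differentiation estimate: one must show
$$\left\| \frac{\mathcal{L}_{s+h}-\mathcal{L}_s}{h} - \mathcal{L}'_s \right\|_\alpha \longrightarrow 0 \quad \text{as } h \to 0.$$
Term by term the difference quotient minus derivative contributes, for each $e$, the function $e\rho \mapsto \big(h^{-1}(e^{(s+h)f(e\rho)} - e^{sf(e\rho)}) - f(e\rho)e^{sf(e\rho)}\big)g(e\rho)$. Using the elementary bound $|h^{-1}(e^{hw}-1) - w| \le |h|\,|w|^2 e^{|h||w|}$ for the scalar $w = f(e\rho)$, each term is $O(|h|)$ times something of the form $f(e\rho)^2 e^{(\operatorname{Re}(s)+|h|)f(e\rho)}$; for $|h|$ small enough that $\operatorname{Re}(s)+|h|$ still lies in $\Gamma$, the quadratic factor is again absorbed by passing to a slightly smaller exponent in $\Gamma$, and the resulting series is summable uniformly in $\rho$. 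Summing over $e$ and taking the sup-norm (and similarly the $V_\alpha$ seminorm, via the standard bounded-distortion control on $S_1 f = f$ restricted to each cylinder $[e]$) gives a bound $\le C|h| \to 0$. This shows $\mathcal{L}_s$ is holomorphic on $\Gamma^+$ with derivative $\mathcal{L}'_s$.

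Finally, for general $n$, I would invoke the product rule for holomorphic operator-valued functions: if $s \mapsto \mathcal{A}_s$ and $s \mapsto \mathcal{B}_s$ are holomorphic into $\mathfrak{B}(X)$ then so is $s \mapsto \mathcal{A}_s\mathcal{B}_s$, with derivative $\mathcal{A}'_s \mathcal{B}_s + \mathcal{A}_s \mathcal{B}'_s$ — this is immediate from the definition by adding and subtracting $\mathcal{A}_{s+h}\mathcal{B}_s$ and using continuity of $s \mapsto \mathcal{A}_s$ (which holomorphy entails) together with submultiplicativity of the operator norm. By induction $\mathcal{L}_s^n = \mathcal{L}_s \cdot \mathcal{L}_s^{n-1}$ is holomorphic on $\Gamma^+$.

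The main obstacle is the first/second steps: ensuring the differentiated series genuinely converges in the Banach space $C^{0,\alpha}$ and that the difference-quotient error is controlled \emph{uniformly in} $\rho$, which is where one crucially exploits that $\Gamma$ is an open half-line so there is room to trade the polynomial factors $f(e\rho)^k$ for an arbitrarily small loss $\epsilon$ in the summability exponent. Everything else — the product rule and the induction — is routine.
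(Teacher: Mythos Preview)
Your argument is correct and the strategy---prove holomorphy of $\mathcal{L}_s$ via an explicit difference-quotient estimate, then induct using the product rule for holomorphic operator-valued functions---is sound. (One slip: in the second step the exponent should be $\operatorname{Re}(s)-|h|$ rather than $\operatorname{Re}(s)+|h|$, since $|f(e\rho)|=-f(e\rho)$ for all but finitely many $e$; this is harmless because, as you correctly say in your final paragraph, what is needed is that $\operatorname{Re}(s)$ minus something small still lies in $\Gamma$, and openness of $\Gamma^+$ supplies exactly this room.)

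The paper takes a different route. Rather than reducing to $n=1$, it treats each $n$ directly by writing $\mathcal{L}_s^n=\sum_{\omega\in E_A^n}\mathcal{F}_{\omega,s}$, where $\mathcal{F}_{\omega,s}g(\rho)=i_{[\omega]}(\rho)\exp\bigl(sS_nf(\omega\rho)\bigr)g(\omega\rho)$. Each $\mathcal{F}_{\omega,s}$ is shown to be holomorphic by reducing to the holomorphy of $s\mapsto\exp(s\mathcal{T})$ for a \emph{fixed} element $\mathcal{T}\in C^{0,\alpha}$, and then the paper invokes the Weierstrass-type fact that a locally uniformly convergent series of holomorphic operator-valued functions is holomorphic; local uniform convergence comes from the explicit bound $\|\mathcal{F}_{\omega,s}\|_\alpha\le\exp\bigl(\operatorname{Re}(s)\sup_{[\omega]}S_nf\bigr)(2+|s|K)$. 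The key difference is that the paper never differentiates the infinite sum: on each fixed cylinder $S_nf$ is bounded, so the term-wise derivative carries only a bounded multiplier, and only the undifferentiated series needs to converge---hence no $\epsilon$-loss in the summability exponent is required. The payoff of the paper's decomposition is that the bound on $\|\mathcal{F}_{\omega,s}\|_\alpha$ is reused verbatim in the subsequent essential-spectral-radius estimate; your approach is arguably cleaner for the bare holomorphy statement but does not produce that byproduct.
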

\begin{proof}
For each $\omega \in E_A ^{n}$ one can consider the (idempotent) function $i_{[\omega]}$ in $C^{0,\alpha}(E_A^{\mathbb{N}},\mathbb{C})$ where it is defined to be $1$ on $\xi$ such that $\omega \xi$ is admissible and $0$ otherwise. Then for each $s$ in the right half  plane $\Gamma^+$ and $g \in C^{0,\alpha}(E_A^{\mathbb{N}},\mathbb{C})$ we define $\mathcal{F}_{\omega,s}g$:
\begin{equation}\label{F_w}
\mathcal{F}_{\omega,s}g(\rho):=i_{[\omega]}(\rho) \exp(sS_nf(\omega\rho))g(\omega\rho).
\end{equation}
We want to show $\mathcal{F}_{\omega,s}$ is an operator on $C^{0,\alpha}(E_A^{\mathbb{N}},\mathbb{C})$. First note that
$$ \|\mathcal{F}_{\omega,s}g(\rho)\|_{\infty} \leq \exp(\operatorname{Re}(s)\sup_{[\omega]} S_nf)\|g\|_{\infty}.$$
To find H\"{o}lder coefficient of $\mathcal{F}_{\omega,s}g$ we let $|\rho \wedge \rho'|\geq k \geq 1$: 
$$ |\mathcal{F}_{\omega,s}g(\rho)-\mathcal{F}_{\omega,s}g(\rho')|\leq \\ | \exp(sS_nf(\omega \rho))g(\omega \rho)-\exp(sS_nf(\omega \rho'))g(\omega \rho')|$$
$$=|\left( \exp(sS_nf(\omega \rho))-\exp(sS_nf(\omega \rho'))\right)g(\omega \rho)+\exp(sS_nf(\omega \rho'))\left(g(\omega \rho)-g(\omega \rho') \right)|$$
$$\leq \exp(\operatorname{Re}(s)\sup_{[\omega]} S_nf).|s|.|S_nf(\omega\rho)-S_nf(\omega\rho')|.\|g\|_{\infty}$$
$$+\exp(\operatorname{Re}(s)\sup_{[\omega]} S_nf)|g(\omega \rho)-g(\omega \rho') | .$$
By Lemma \ref{bounded S_n}, we get
$$|\mathcal{F}_{\omega,s}g(\rho)-\mathcal{F}_{\omega,s}g(\rho')|\exp(\alpha k) \leq |\mathcal{F}_{\omega,s}g(\rho)-\mathcal{F}_{\omega,s}g(\rho')|\exp(\alpha |\rho \wedge \rho'|)$$
$$\leq \exp(\operatorname{Re}(s)\sup_{[\omega]} S_nf).|s|.K.\|g\|_{\infty}+ \exp(\operatorname{Re}(s)\sup_{[\omega]} S_nf)V_{\alpha}(g)$$ 
$$ \leq \exp(\operatorname{Re}(s)\sup_{[\omega]} S_nf)\|g\|_{\alpha}(1+|s|K),$$
where $K$ depends only on $f$.
Therefore we can write:
$$\|\mathcal{F}_{\omega,s}g\|_{\alpha}=\|\mathcal{F}_{\omega,s}g\|_{\infty} +V_{\alpha}(\mathcal{F}_{\omega,s}g)$$
$$ \leq \exp(\operatorname{Re}(s)\sup_{[\omega]} S_nf)\|g\|_{\infty}+\exp(\operatorname{Re}(s)\sup_{[\omega]} S_nf)\|g\|_{\alpha}(1+|s|K)$$ 
$$ \leq \exp(\operatorname{Re}(s)\sup_{[\omega]} S_nf)\|g\|_{\alpha}(2+|s|K),$$
so
\begin{equation}\label{norm}
\| \mathcal{F}_{\omega,s}\|_{\alpha}\leq \exp(\operatorname{Re}(s)\sup_{[\omega]} S_nf)(2+|s|K).
\end{equation}
Next, we want to show the map $s \mapsto \mathcal{F}_{\omega,s}$  is holomorphic on $\Gamma ^+$. As expected derivative is 
$$\mathcal{F}'_{\omega,s}g(\rho)=i_{[\omega]}(\rho). \exp(sS_nf(\omega\rho)).S_nf(\omega\rho).g(\omega\rho),$$
we first need to show this defines an operator on $C^{0,\alpha}(E_A^{\mathbb{N}},\mathbb{C})$ and then to check it is actually bounded. Note that $|S_nf|$ is bounded on $[\omega]$ by some C, see definition \ref{def Gibbs}. If we review all the inequalities above and replace all the $g(\omega...)$ with $S_n(\psi)(\omega...)g(\omega...)$  we get:
$$\|\mathcal{F}'_{\omega,s}g\|_{\alpha}=\|\mathcal{F}'_{\omega,s}g\|_{\infty} +V_{\alpha}(\mathcal{F}'_{\omega,s}g)$$
$$ \leq \exp(\operatorname{Re}(s)\sup_{[\omega]} S_nf).\|g\|_{\infty}.C+\exp(\operatorname{Re}(s)\sup_{[\omega]} S_nf).\|g\|_{\alpha}.(C+|s|KC+K)$$ 
$$ \leq \exp(\operatorname{Re}(s)\sup_{[\omega]} S_nf)\|g\|_{\alpha}(2C+|s|KC+K).$$
Fix $s_0$ in $\Gamma ^+$, we write:
$$ \left(\mathcal{F}_{\omega,s}-\mathcal{F}_{\omega,s_0}-(s-s_0)\mathcal{F}'_{\omega,s_0}\right)g(\rho)$$
$$= i_{[\omega]}(\rho). \big(\exp(sS_nf(\omega\rho))-\exp(s_0S_nf(\omega\rho))$$
$$-(s-s_0)\exp(s_0S_nf(\omega\rho))S_nf(\omega\rho)\big).g(\omega\rho),$$
therefore 
$$s \mapsto \mathcal{F}_{\omega,s} \in \mathfrak{B}$$ 
is holomorphic iff 
$$s \mapsto i_{[\omega]}(...) \exp(sS_nf(\omega...)) \in C^{0,\alpha}(E_A^{\mathbb{N}},\mathbb{C})$$ 
is holomorphic. But 
$$i_{[\omega]}(...) \exp(sS_nf(\omega...))=i_{[\omega]}(...) \exp(si_{[\omega]}(...)S_nf(\omega...))$$
and $i_{[\omega]}(...)S_nf(\omega...) \in C^{0,\alpha}(E_A^{\mathbb{N}},\mathbb{C})$, thus since $i_{[\omega]}$ is a constant function of $s$ problem boils down to holomorphy of the function $s \mapsto \exp(s\mathcal{T})$ for $\mathcal{T} \in C^{0,\alpha}(E_A^{\mathbb{N}},\mathbb{C})$, and this is clearly holomorphic.\\
Thus the map $s \mapsto \mathcal{F}_{\omega,s}$ defines a holomorphic $\mathfrak{B}$-valued function on the right half plane $\Gamma^+$. Now because for $s \in \Gamma^{+}$, $\operatorname{Re}(s)f$ admits Gibbs state, $\mathfrak{B}$-valued function 
$$s \mapsto \mathcal{L}_s ^n =\displaystyle \sum_{\omega \in E_A^n}\mathcal{F}_{\omega,s}$$
converges and so is holomorphic on $\Gamma ^+$.
\end{proof}
\begin{proposition}\label{spectral radius L}
The spectral radius of $\mathcal{L}_s$ is at most $e^{P(x)}$ and $r_{\text{ess}}(\mathcal{L}_s)<e^{P(x)}$.
\end{proposition}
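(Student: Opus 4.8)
The plan is to treat the two assertions separately, in both cases recycling the norm estimates for the building blocks $\mathcal{F}_{\omega,s}$ from the previous lemma. Throughout set $x=\operatorname{Re}(s)$ and
$$\tilde{\Lambda}_n:=\sum_{\omega\in E_A^n}\sup_{[\omega]}\bigl|\exp(sS_nf)\bigr|=\sum_{\omega\in E_A^n}\exp\Bigl(\sup_{[\omega]}S_n(xf)\Bigr),$$
which is finite for $s\in\Gamma^+$; since this is exactly $\exp$ of $n$ times the $n$-th term defining the topological pressure (Definition \ref{pressure}, the limit existing by Fekete's Lemma \ref{fekete}), we have $\tfrac1n\log\tilde{\Lambda}_n\to P(xf)=P(x)$.

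For $r(\mathcal{L}_s)\le e^{P(x)}$ I would use the Gelfand formula $r(\mathcal{L}_s)=\lim_n\|\mathcal{L}_s^n\|_\alpha^{1/n}$. Writing $\mathcal{L}_s^n=\sum_{\omega\in E_A^n}\mathcal{F}_{\omega,s}$ and invoking the estimate $\|\mathcal{F}_{\omega,s}\|_\alpha\le\exp\bigl(x\sup_{[\omega]}S_nf\bigr)(2+|s|K)$ from (\ref{norm}) -- crucially with the constant $K=K_f$ of Lemma \ref{bounded S_n} being \emph{uniform in $n$} -- gives $\|\mathcal{L}_s^n\|_\alpha\le(2+|s|K_f)\tilde{\Lambda}_n$. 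Taking $n$-th roots and letting $n\to\infty$ yields the bound.

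For $r_{\text{ess}}(\mathcal{L}_s)<e^{P(x)}$ I would first establish a Doeblin--Fortet (Lasota--Yorke) inequality
$$\|\mathcal{L}_s^n g\|_\alpha\le \tilde{\Lambda}_n e^{-\alpha n}\|g\|_\alpha+(1+|s|K_f)\tilde{\Lambda}_n\|g\|_\infty .$$
The bound $\|\mathcal{L}_s^n g\|_\infty\le\tilde{\Lambda}_n\|g\|_\infty$ is immediate. For the H\"older seminorm, fix $\rho,\rho'$ with $|\rho\wedge\rho'|=k\ge1$; admissibility of $\omega\rho$ depends only on $\omega$ and $\rho_1$, so both sums run over the same words, and for each $\omega$ one splits $\exp(sS_nf(\omega\rho))g(\omega\rho)-\exp(sS_nf(\omega\rho'))g(\omega\rho')$ as in the previous lemma, using $|e^{a}-e^{b}|\le|a-b|\max\{e^{\operatorname{Re}a},e^{\operatorname{Re}b}\}$, Lemma \ref{bounded S_n} for the increment of $S_nf$, and -- the crucial point -- $|\omega\rho\wedge\omega\rho'|=n+k$, whence $|g(\omega\rho)-g(\omega\rho')|\le V_\alpha(g)e^{-\alpha(n+k-1)}$. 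Summing over $\omega$ and multiplying by $e^{\alpha(k-1)}$ produces the contraction factor $e^{-\alpha n}$ in front of $V_\alpha(g)$ (the $\|g\|_\infty$ contribution retaining a factor $e^{-\alpha}e^{-(1-\alpha)k}\le1$). Feeding this into Nussbaum's formula for $r_{\text{ess}}$ via the $\|\cdot\|_{\mathfrak{K}}$ semi-norm, through the standard Hennion / Ionescu-Tulcea--Marinescu quasi-compactness argument, gives $r_{\text{ess}}(\mathcal{L}_s)\le\limsup_n(\tilde{\Lambda}_n e^{-\alpha n})^{1/n}=e^{P(x)-\alpha}<e^{P(x)}$.

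The main obstacle is making this last step rigorous when $E$ is infinite: the $\|\cdot\|_\alpha$-unit ball is \emph{not} relatively compact in $(C_b,\|\cdot\|_\infty)$ (the indicators $\mathbbm{1}_{[e]}$, $e\in E$, all have $\|\mathbbm{1}_{[e]}\|_\alpha=1$ yet are mutually $\|\cdot\|_\infty$-separated), so Hennion's theorem does not apply verbatim. The remedy, which I would follow, is to approximate $\mathcal{L}_s^n$ in $\|\cdot\|_\alpha$ by the transfer operators of the finite sub-alphabets $E^{(l)}=\{1,\dots,l\}\cap E$, using summability -- $\sum_{e\notin E^{(l)}}\exp(\sup_{[e]}xf)\to0$ -- to control the tail and Arzel\`a--Ascoli on each finite sub-system to recover the compactness needed for the criterion; this is the argument of Mauldin--Urba\'{n}ski adapted in \cite{UP}, to which I would appeal for the compactness input. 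Everything else reduces to the $\mathcal{F}_{\omega,s}$ bounds of the previous lemma, sharpened to exhibit the factor $e^{-\alpha n}$, together with the definition of pressure.
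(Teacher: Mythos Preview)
Your approach is essentially the paper's. For the spectral-radius bound you sum the $\mathcal{F}_{\omega,s}$ estimates of the previous lemma directly, whereas the paper quotes the Lasota--Yorke inequality (\ref{lasota1}) from \cite{MU}; your route is a bit more self-contained and yields the same conclusion via Gelfand. For the essential spectral radius, both arguments hinge on producing finite-rank approximants to $\mathcal{L}_s^n$ in $\|\cdot\|_\alpha$ and invoking Nussbaum's formula. You sketch this as ``restrict to finite sub-alphabets and use Arzel\`a--Ascoli''; the paper makes it concrete by composing $\mathcal{L}_s^n$ with the explicit finite-rank projection $\mathcal{E}_{n,N}g=\sum_{\omega\in E_A^n(N)}g(\hat\omega)\mathbbm{1}_{[\omega]}$ and bounding $\|\mathcal{L}_s^n-\mathcal{L}_s^n\mathcal{E}_{n,N}\|_\alpha$ by splitting into $\mathcal{L}_s^n(\mathcal{I}-\mathcal{E}_n)$ (controlled by Lasota--Yorke, giving the $e^{-\alpha n}$ factor you also isolate) and $\mathcal{L}_s^n(\mathcal{E}_n-\mathcal{E}_{n,N})$ (controlled by summability of $f$). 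Your diagnosis of the obstacle---that for infinite $E$ the $\|\cdot\|_\alpha$-unit ball is not $\|\cdot\|_\infty$-precompact, so Hennion does not apply verbatim---is exactly the point the paper's construction addresses. One cosmetic difference: your direct Lasota--Yorke yields $r_{\text{ess}}\le e^{P(x)-\alpha}$, while the paper, routing the tail estimate through $\mathcal{L}_{x-\epsilon}$, lands at $e^{P(x-\epsilon)-\alpha}$ and then takes $\epsilon$ small; either way the strict inequality follows.
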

\begin{proof}
For the case $E$ is finite we just refer to \cite[~ p. 140]{P-RPF}. Assuming $E$ is infinite, the former part is a straightforward consequence of Doeblin inequality (also known as Ionescu Tulcea-Marinescu inequality or Lasota-Yorke inequality) shown in \citep[p. 32]{MU}:
\begin{equation}\label{lasota1}
\|\mathcal{L}_s ^n g  \|_{\alpha} \leq e^{nP(x)} (Qe^{-\alpha n}\|g\|_{\alpha}+C \|g\|_{\infty}) ,
\end{equation}
which leaves 
\begin{equation}\label{lasota}
\|\mathcal{L}_s ^n\|_{\alpha} \leq e^{nP(x)} (Q+C).
\end{equation}
First for every $\omega \in E_A^*$ choose $\hat{\omega} \in [\omega]$ arbitrarily. Then for every $n\geq 1$ consider the operator $\mathcal{E}_n$ on $C^{0,\alpha}(E_A^{\mathbb{N}},\mathbb{C})$ defined by:
$$\mathcal{E}_n(g) \coloneqq \sum_{\omega \in E_A^n} g(\hat{\omega})\mathbb{1}_{[\omega]}.$$
Therefore $\mathcal{E}_ng$ is constant on each cylinder $[\omega]$. It is clear that $\|\mathcal{E}_ng\|_{\infty} \leq \|g\|_{\infty}$. We want to show $V_{\alpha}(\mathcal{E}_ng) \leq V_{\alpha}(g)$. Remembering definition \ref{HO type} if $m\geq n$ then clearly $V_{\alpha, m}=0$, in case $1\leq m<n$ and $|\rho_1 \wedge \rho_2|\geq m$ there should be $\omega_1, \omega_2 \in E_A^n$ such that $\rho_1 \in [\omega_1]$ and $\rho_2 \in [\omega_2]$, therefore $|\hat{\omega}_1\wedge\hat{\omega}_2|\geq m$ and
$$|\mathcal{E}_ng(\rho_1)-\mathcal{E}_ng(\rho_2)|e^{\alpha(m-1)}=|g(\hat{\omega}_1)-g(\hat{\omega}_2)|e^{\alpha(m-1)} \leq V_{\alpha,m}(g).$$
Thus we have:
\begin{equation}\label{basic}
\|\mathcal{E}_ng\|_{\alpha} \leq \|g\|_{\alpha}.   
\end{equation}
Next without loss of generality assume $E=\mathbb{N}$ and for each $N\geq 1$ define
$$E_A^n(N) \coloneqq\{ \omega \in E_A^n : \omega_1,\omega_2,...,\omega_n \leq N\}$$
$$E_A^n(N+) \coloneqq  E_A^n \setminus E_A^n(N)$$
$$\mathcal{E}_{n,N}g \coloneqq \sum _{\omega \in E_A^n(N)} g(\hat{\omega})\mathbb{1}_{[\omega]}.$$
Note that $n$ and $N$ are independent. Moreover notice that this time since we have finite sum the operator $\mathcal{E}_{n,N}$ on $C^{0,\alpha}(E_A^{\mathbb{N}},\mathbb{C})$ is of finite rank and so compact. We use triangle inequality to write:
$$\|\mathcal{L}_s ^n-\mathcal{L}_s ^n \mathcal{E}_{n,N} \|_{\alpha} \leq \|(\mathcal{L}_s ^n -\mathcal{L}_s ^n \mathcal{E}_n)+(\mathcal{L}_s ^n \mathcal{E}_n -\mathcal{L}_s ^n \mathcal{E}_{n,N} )\|_{\alpha}$$
\begin{equation}\label{triangle L}
\leq \|\mathcal{L}_s ^n(\mathcal{I} -\mathcal{E}_n)\|_{\alpha}+\|\mathcal{L}_s ^n( \mathcal{E}_n - \mathcal{E}_{n,N} )\|_{\alpha} ,
\end{equation}
where $\mathcal{I}$ is just the identity operator. Note that \ref{basic} implies $\|g-\mathcal{E}_ng\|_{\alpha} \leq 2\|g\|_{\alpha}$. Furthermore, for any $\rho \in E_A^{\mathbb{N}}$ if set $\omega=\rho_1...\rho_n$ then we have $|\rho\wedge \hat{\omega}|\geq n$ and
$$|g(\rho)-\mathcal{E}_ng(\rho)|e^{\alpha(n-1)}=|g(\rho)-g(\hat{\omega})|e^{\alpha(n-1)}\leq V_{\alpha,n}(g)\leq V_{\alpha}(g).$$
Since $\rho$ is arbitrary, we obtain 
$$\|g-\mathcal{E}_ng\|_{\infty} \leq V_{\alpha}(g)e^{\alpha}e^{-\alpha n}\leq \|g\|_{\alpha}e^{\alpha} e^{-\alpha n}.$$ 
Thus using two recent inequalities and $\ref{lasota1}$ we find
$$\|\mathcal{L}_s ^n(I -\mathcal{E}_n)g\|_{\alpha} \leq e^{nP(x)} (Qe^{-\alpha n}2\|g\|_{\alpha}+C \|g\|_{\alpha}e^{\alpha}e^{-\alpha n})$$
\begin{equation}\label{approximating L(I-E)}
\leq C_1e^{nP(x)}\|g\|_{\alpha}e^{-\alpha n},
\end{equation}
for some constant $C_1>0$. Recalling $\mathcal{F}_{\omega,s}$ from the proof of previous lemma, we can write
$$\mathcal{F}_{\omega ',s}(\mathcal{E}_n g-\mathcal{E}_{n,N}g)=\sum_{|\omega|=n}g(\hat{\omega})\mathcal{F}_{\omega ',s}(\mathbb{1}_{[\omega]})-\sum_{\omega \in E_A^n(N)}g(\hat{\omega })\mathcal{F}_{\omega ',s}(\mathbb{1}_{[\omega ]})$$ 
$$=\sum_{\omega \in E_A^n(N+)}g(\hat{\omega })\mathcal{F}_{\omega ',s}(\mathbb{1}_{[\omega ]})=g(\hat{\omega ' })\mathcal{F}_{\omega ',s}(\mathbb{1}_{[\omega ']})\; \; \text{or}  \; \; 0, $$
depending on $\omega ' \in E_A^n(N+)$ or not, so
$$\mathcal{L}_s ^n(\mathcal{E}_n g-\mathcal{E}_{n,N}g)=\sum_{\omega ' \in E_A^n}F_{\omega ',s}(\mathcal{E}_n g-\mathcal{E}_{n,N}g)=\sum_{\omega  \in E_A^n(N+)}g(\hat{\omega  })\mathcal{F}_{\omega,s }(\mathbb{1}_{[\omega]}).$$
Then $\ref{norm}$ leaves:
$$\|\mathcal{L}_s ^n(\mathcal{E}_n g-\mathcal{E}_{n,N}g)\|_{\alpha} \leq \| g\|_{\infty} \sum_{\omega  \in E_A^n(N+)}\| \mathcal{F}_{\omega,s }(\mathbb{1}_{[\omega]})\|_{\alpha}$$
$$ \leq \| g\|_{\infty}(2+|s|K) \sum_{\omega  \in E_A^n(N+)}\exp (x \sup _{[\omega]}S_nf) .$$
Now since $A$ is finitely irreducible, there exists a finite set $\Omega \subseteq E_A^*=\cup_n E_A^n$ such that for every $e \in E$ and $\rho \in E_A^{\mathbb{N}}$, there is $\omega \in \Omega$ with $e\omega\rho$ being admissible. Thus there exists a finite set $F \subseteq E_A^{\mathbb{N}}$ such that for every $e \in E$, there is $\tau \in F$ with $e \tau$ being admissible. For every $\omega \in E_A^*$ choose $\tau_{\omega} \in F$ with $\omega \tau_{\omega}$ admissible. Therefore using \ref{Gibbs} we can continue
$$\leq \| g\|_{\infty}(2+|s|K)Q^2 \sum_{\omega  \in E_A^n(N+)}\exp (x S_nf(\omega \tau_{\omega})).$$
Moreover, if we consider 
$$c_N \coloneqq \sup_{j \geq N} \exp (\sup f[j]),$$
then the fact that $f$ is summable implies that $c_N \rightarrow 0$. Now for each $\omega  \in E_A^n(N+)$ there is $\omega_i>N$ so 
$$\exp ( S_nf(\omega \tau_{\omega}))=\exp ( S_{i-1}f(\omega\tau_{\omega}))+f(\omega_i...\omega_n\tau_{\omega})+S_{n-i}f(\omega_{i+1}...\omega_n\tau_{\omega}))$$
$$\leq Q.c_N.Q=Q^2c_N.$$
Therefore for small enough $\epsilon>0$ we have
$$\|\mathcal{L}_s ^n(\mathcal{E}_n g-\mathcal{E}_{n,N}g)\|_{\alpha}$$
$$\leq  \| g\|_{\infty}(2+|s|K)Q^2 \sum_{\omega  \in E_A^n(N+)}\exp \left(\epsilon  S_nf(\omega \tau_{\omega})\right) \exp \left((x-\epsilon) S_nf(\omega \tau_{\omega})\right) $$
$$ \leq \| g\|_{\infty}(2+|s|K)Q^2Q^{2\epsilon}c_N^{\epsilon} \sum_{\omega  \in E_A^n(N+)} \exp \left((x-\epsilon) S_nf(\omega \tau_{\omega})\right) $$
$$\leq \| g\|_{\infty}(2+|s|K)Q^4c_N^{\epsilon} \sum_{\tau \in F} \mathcal{L}_{x-\epsilon} ^n(\mathbb{1})(\tau)\leq  \| g\|_{\infty}(2+|s|K)Q^4c_N^{\epsilon} \#F\| \mathcal{L}_{x-\epsilon} ^n\|_{\alpha}.$$
This together with \ref{lasota} yields
$$\|\mathcal{L}_s ^n(\mathcal{E}_n -\mathcal{E}_{n,N})\|_{\alpha} \leq (2+|s|K)Q^4c_N^{\epsilon} \#F\| \mathcal{L}_{x-\epsilon} ^n\|_{\alpha}$$
$$ \leq (2+|s|K)Q^4c_N^{\epsilon} \#F(Q+C) e^{nP(x-\epsilon)}.$$
For large enough $N$ we get
$$\|\mathcal{L}_s ^n(\mathcal{E}_n -\mathcal{E}_{n,N})\|_{\alpha} \leq e^{nP(x-\epsilon)}e^{-\alpha n}.$$
Thus since $P$ is strictly decreasing, the above inequality combined with \ref{triangle L} and \ref{approximating L(I-E)} implies
$$\|\mathcal{L}_s ^n-\mathcal{L}_s ^n \mathcal{E}_{n,N} \|_{\alpha} \leq C_1e^{nP(x)}e^{-\alpha n} + e^{nP(x-\epsilon)}e^{-\alpha n}\leq C_2 e^{nP(x-\epsilon)}e^{-\alpha n}.$$
Therefore, we can estimate the essential spectral radius by proposition \ref{ess spec rad}:
$$r_{\text{ess}}(\mathcal{L}_s)=\lim_n \|\mathcal{L}_s^n\|_{\mathfrak{K}}^{1/n} \leq \limsup_n \|\mathcal{L}_s ^n-\mathcal{L}_s ^n \mathcal{E}_{n,N} \|_{\alpha}^{1/n}\leq e^{P(x-\epsilon)}e^{-\alpha }.$$
Since $\epsilon$ was chosen small enough, this completes the proof.
\end{proof}
We want to introduce two operators closely related to the transfer operator.
The first operator is $\mathcal{L}_{0}$. There is $s$ hidden in the definition but we don't write that. It is defined by:
$$\mathcal{L}_0:=e^{-P(x)}\mathcal{L}_s,$$
and another operator is the weighted operator defined by:
$$\bar{\mathcal{L}}_sg:=e^{-P(x)}\dfrac{1}{h_x}\mathcal{L}_s(gh_x),$$
where $h_{x}$ is a fixed point of $\mathcal{L}_0$ obtained in \citep[p. 34]{MU} as the (compactly) convergent point of the sequence $\{ \frac{1}{n_k}\sum_{i=0}^{n_k-1}e^{-P(x)}\mathcal{L}_{x}^j(\mathbb{1})\}$. In other words, $h_x$ is actually an eigenfunction of $\mathcal{L}_x$ corresponding to the eigenvalue $e^{P(x)}$. Moreover, it is clear that $\int h_x \text{d}m_x=1.$
\begin{lemma}\label{epsilon bounded below}
There is $c>0$ such that $h_x>c$.
\end{lemma}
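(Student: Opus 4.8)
The plan is to read the bound off the eigenfunction identity $\mathcal{L}_x h_x = e^{P(x)} h_x$, combined with the Gibbs estimate \ref{Gibbs} and the finite irreducibility of the shift. First I would record the elementary properties of $h_x$ that are needed. Since $h_x$ is the compactly convergent limit of Ces\`aro averages of the non-negative functions $(e^{-P(x)}\mathcal{L}_x)^j(\mathbbm{1})$, we have $h_x \ge 0$; since $h_x \in C^{0,\alpha}(E_A^{\mathbb{N}},\mathbb{C})$ it is continuous; and since $\int h_x\,dm_x = 1$ it is not identically zero. Consequently there is a point $\rho^{*}$ with $h_x(\rho^{*}) > 0$, and by continuity together with Proposition \ref{polish} one may pick a cylinder $[\tau]$ containing $\rho^{*}$ on which $h_x \ge \varepsilon_0 := h_x(\rho^{*})/2 > 0$.

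Next I would iterate the eigenfunction equation. Writing out $\mathcal{L}_x^n h_x = e^{nP(x)} h_x$ via Definition \ref{RPF} gives, for every $\rho \in E_A^{\mathbb{N}}$ and $n \ge 1$,
\[
h_x(\rho) \;=\; e^{-nP(x)}\!\!\sum_{\substack{\omega \in E_A^n \\ \omega\rho\ \text{admissible}}}\!\! \exp\!\bigl(x\,S_nf(\omega\rho)\bigr)\,h_x(\omega\rho)\;\ge\; \exp\!\bigl(x\,S_nf(\gamma\rho)-nP(x)\bigr)\,h_x(\gamma\rho)
\]
for any single admissible word $\gamma \in E_A^n$ with $\gamma\rho$ admissible, since all the summands are non-negative; applying \ref{Gibbs} to the retained term bounds $\exp(xS_nf(\gamma\rho)-nP(x))$ below by $Q_x^{-1} m_x([\gamma])$. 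It remains to choose $\gamma$, for each $\rho$, so that it simultaneously begins with $\tau$ (forcing $\gamma\rho \in [\tau]$ and hence $h_x(\gamma\rho) \ge \varepsilon_0$) and ranges over only finitely many words as $\rho$ varies. This is exactly what finite irreducibility (Definition \ref{finitely irreducible}) provides: with $\Omega$ the finite connecting set, for a given $\rho$ choose $\kappa \in \Omega$ with $(\text{last letter of }\tau)\,\kappa\,\rho_1$ admissible and put $\gamma := \tau\kappa$, so that $\gamma\rho$ is admissible, $\gamma\rho \in [\tau]$, and $\gamma$ lies in the finite family $\{\tau\kappa : \kappa \in \Omega,\ \tau\kappa \in E_A^*\}$. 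Since the Gibbs state $m_x$ has full support, $m_{\min} := \min\{ m_x([\tau\kappa]) : \kappa \in \Omega,\ \tau\kappa\ \text{admissible}\}$ is a positive constant, and combining the estimates yields $h_x(\rho) \ge Q_x^{-1} m_{\min}\,\varepsilon_0$ for every $\rho$; the lemma then holds with, say, $c := \tfrac{1}{2} Q_x^{-1} m_{\min}\,\varepsilon_0$.

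I do not expect a genuine obstacle; the only point requiring care is the uniformity issue just described. In the infinite-alphabet setting the one-symbol cylinders $[e]$ have measures tending to $0$, so it is essential that the word joining $[\tau]$ to $\rho$ be drawn from the \emph{finite} set $\Omega$, which keeps both the length $n = |\tau| + |\kappa|$ and the resulting collection of cylinders $[\gamma]$ uniformly controlled; were $\gamma$ allowed to depend freely on $\rho$ the constant would degenerate. Everything else — the non-negativity and continuity of $h_x$, and the passage from the eigenvalue equation to the single-term lower bound — is routine.
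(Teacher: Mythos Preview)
Your argument is correct, and it takes a genuinely different route from the paper's. The paper does not touch the eigenfunction identity at all: it works directly with the defining Ces\`aro averages $\frac{1}{n_k}\sum_{j=0}^{n_k-1}\mathcal{L}_0^j(\mathbbm{1})$ and invokes Theorem~2.3.5 of Mauldin--Urba\'nski, which supplies constants $R>0$ and $M$ such that every block of $M{+}1$ consecutive iterates already dominates $R$; grouping the sum into such blocks yields $h_x\ge R/(M{+}1)$ immediately, with no need to first locate a point where $h_x$ is positive. Your approach instead bootstraps: positivity on one cylinder (from continuity and $\int h_x\,dm_x=1$) is propagated everywhere via $\mathcal{L}_x^n h_x=e^{nP(x)}h_x$, finite irreducibility, and the Gibbs bound \eqref{Gibbs}. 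The paper's proof is shorter and produces an explicit constant but leans on an external reference; yours is self-contained within the notions already introduced here and is the standard Perron--Frobenius style argument. One cosmetic point: since the words in $\Omega$ need not share a common length, your $n=|\tau|+|\kappa|$ varies with $\rho$, but you have handled this correctly by taking the minimum of $m_x([\tau\kappa])$ over the finite family of admissible $\tau\kappa$.
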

\begin{proof}
We use theorem 2.3.5 from \citep[p. 29]{MU} to show this. Let $n_k-1=(M+1)t_k+r_k$ where $0\leq r_k \leq M$ then
$$\frac{1}{n_k}\sum_{i=0}^{n_k-1}\mathcal{L}_0^j(\mathbb{1}) \geq \frac{1}{n_k}\sum_{i=1}^{(M+1)t_k}\mathcal{L}_0^j(\mathbb{1}) \geq \frac{1}{n_k}t_k R$$
which leaves $h_{x}\geq \frac{R}{M+1}$.
\end{proof}
\begin{lemma}\label{constant modulus}
If $g \in C^{0,\alpha}(E_A^{\mathbb{N}},\mathbb{C})$ is non-negative then $\{ \frac{1}{n}\sum_{j=1}^n\bar{\mathcal{L}}_{x}^jg\}$ has a converging subsequnce with  limit $ \int g\text{d}\mu_x$, where $\mu_x$ is the equilibrium state of $xf$.
\end{lemma}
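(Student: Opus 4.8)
The plan is to run, for the function $gh_x$, essentially the same Cesàro-averaging argument that Mauldin--Urba\'nski use \citep[p. ~34]{MU} to produce the fixed point $h_x$, and then to pin down the limit by duality against the invariant measure. First I would record the relevant normalizations. Since $\mathcal{L}_x h_x=e^{P(x)}h_x$ we have $\bar{\mathcal{L}}_x\mathbb{1}=\mathbb{1}$, and writing $\mu_x:=h_x m_x$ for the invariant ergodic Gibbs state of $xf$ one checks directly from $\mathcal{L}^*_x m_x=e^{P(x)}m_x$ (Theorem~\ref{R. D. Mauldin and M. Urbanski}) that $\int\bar{\mathcal{L}}_x\phi\,\mathrm{d}\mu_x=\int\phi\,\mathrm{d}\mu_x$ for every bounded measurable $\phi$. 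By Lemma~\ref{epsilon bounded below} together with $h_x\in C^{0,\alpha}$ we also have $0<c\le h_x\le\|h_x\|_\infty<\infty$. Finally note the bookkeeping identity
$$\frac1n\sum_{j=1}^n\bar{\mathcal{L}}_x^j g=\frac1{h_x}\cdot\frac1n\sum_{j=1}^n\mathcal{L}_0^j(gh_x),$$
so it suffices to analyse the Cesàro averages of $\mathcal{L}_0^j$ applied to the nonnegative Hölder function $gh_x$.

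Next I would establish a uniform bound: applying the Ionescu Tulcea--Marinescu inequality \ref{lasota1} with $s=x$ to $gh_x$ gives $\sup_j\|\mathcal{L}_0^j(gh_x)\|_\alpha<\infty$, hence the Cesàro averages $A_n:=\tfrac1n\sum_{j=1}^n\bar{\mathcal{L}}_x^j g$ form a bounded subset of $C^{0,\alpha}(E_A^{\mathbb{N}},\mathbb{C})$. When $E$ is finite, $E_A^{\mathbb{N}}$ is compact and Arzelà--Ascoli immediately supplies a subsequence $A_{n_k}$ converging uniformly. When $E$ is infinite the space is no longer compact, and this is the \emph{main obstacle}: one has to invoke the tail/finite-rank estimates behind Proposition~\ref{spectral radius L} (equivalently, the argument of \citep[p. ~34]{MU}) to bound, uniformly in $n$, the contribution to $A_n$ coming from cylinders $[\omega]$ containing a large letter, and then combine this with a diagonal argument over the finite truncations $E_A^n(N)$ to extract a subsequence $A_{n_k}$ converging compactly to some $g_\infty\in C^{0,\alpha}$ with $g_\infty\ge 0$.

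It then remains to identify $g_\infty$. From the telescoping identity $\bar{\mathcal{L}}_x A_n=A_n+\tfrac1n\big(\bar{\mathcal{L}}_x^{\,n+1}g-\bar{\mathcal{L}}_x g\big)$ and the uniform $\|\cdot\|_\infty$-bound on $\bar{\mathcal{L}}_x^j g$, the correction term tends to $0$; using that $\bar{\mathcal{L}}_x$ passes to the limit under compact convergence (here one needs the Gibbs estimate \ref{Gibbs} to dominate, term by term, the tail of the series defining $\mathcal{L}_x$, exactly as in the proof of Proposition~\ref{spectral radius L}) one gets $\bar{\mathcal{L}}_x g_\infty=g_\infty$. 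This identity says precisely that $g_\infty\,\mathrm{d}\mu_x$ is a finite $\sigma$-invariant measure absolutely continuous with respect to $\mu_x$; since $\mu_x$ is ergodic its Radon--Nikodym derivative is $\mu_x$-a.e.\ constant, and since $g_\infty$ is continuous while $\mu_x$ is a Gibbs state and hence of full support, $g_\infty\equiv c$ for some constant $c\ge 0$. Finally $\int A_{n_k}\,\mathrm{d}\mu_x=\int g\,\mathrm{d}\mu_x$ for every $k$ by the invariance recorded in the first paragraph, and letting $k\to\infty$ by dominated convergence (with the uniform sup-bound) yields $c=\int g_\infty\,\mathrm{d}\mu_x=\int g\,\mathrm{d}\mu_x$, which is the assertion.

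Beyond the non-compact subsequence extraction, the only other point requiring care is the continuity of $\bar{\mathcal{L}}_x$ for the topology in which $A_{n_k}$ converges; this is routine given the estimates already in hand. I would also remark that, because we only assume finite irreducibility (topological mixing but not exactness), passing to a subsequence in the Cesàro averages is genuinely what is available here — one cannot expect convergence of $\bar{\mathcal{L}}_x^j g$ itself without an extra primitivity hypothesis.
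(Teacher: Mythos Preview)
Your proposal is correct and follows essentially the same route as the paper: extract a compactly convergent subsequence of the Ces\`aro averages (deferring to \cite[p.~34]{MU} for the infinite-alphabet case), show the limit is fixed by $\bar{\mathcal{L}}_x$, use uniqueness/ergodicity of the invariant Gibbs state to force the limit to be constant, and identify the constant by integrating against $\mu_x$. The only packaging differences are that the paper obtains the uniform $\|\cdot\|_\alpha$-bound directly from $\|\bar{\mathcal{L}}_x^j g\|_\alpha\le\|g\|_\alpha$ rather than via the Lasota--Yorke inequality on $\mathcal{L}_0^j(gh_x)$, and it cites the uniqueness theorem for invariant Gibbs states rather than invoking ergodicity of $\mu_x$ explicitly; these are equivalent formulations of the same steps.
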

\begin{proof}
Observe that $\bar{\mathcal{L}}_{x}(\mathbb{1})=\mathbb{1}$ and so $\bar{\mathcal{L}}_{x}^j(\mathbb{1})=\mathbb{1}$ for each $j\geq1$. Then one can start with $\|\bar{\mathcal{L}}_{x}^jg\|_{\alpha} \leq \|g\|_{\alpha}$ and follow the same proof of theorem 2.4.3 \citep[p. 34]{MU} to find that $\{ \frac{1}{n}\sum_{j=1}^n\bar{\mathcal{L}}_{x}^jg\}$  has a converging subsequence with limit $g_1 \in C^{0,\alpha}(E_A^{\mathbb{N}},\mathbb{C})$, where $\bar{\mathcal{L}}_{x}g_1=g_1$. This leaves $g_1h_{x}$ as a fixed point of $\mathcal{L}_0$. Since $g$ is non-negative so is $g_1$ and $g_1h_{x}$. Now theorem 2.4.7 \citep[p. 39]{MU} tells us that 
$$(\frac{g_1}{d}h_{x}m_x)\circ\sigma^{-1}=\frac{g_1}{d}h_{x}m_x, \; \; \; d=\int g_1h_{x}dm_x,$$ 
where $m_x$ is eigenmeasure of $\mathcal{L}_x$. Therefore if one defines a measure by $\mu_1(A)=\frac{1}{d} \int_Ag_1h_{x}dm_x$, we find that 
$$\mu_1(\sigma^{-1}(A))=\frac{1}{d}\int_{\sigma^{-1}(A)}g_1h_{x}dm_x$$
$$=\frac{1}{d}\int_Ag_1\circ\sigma^{-1}h_{x}\circ\sigma^{-1}d(m\circ\sigma^{-1})=\frac{1}{d}\int_A g_1h_{x}dm_x=\mu_1(A).$$
That leaves an invariant absolutely continuous measure with respect to $m_x$. Then theorem 10.4.2 \citep{Munday} implies that $\mu_1$ must be $\mu_x$, therefore the Randon-Nikodym derivative of $\mu_1$ with respect to $m$ is the same as that of $\mu$ with respect to $m$ a.e. which means $ g_1=d$ a.e. and  since $g_1$ is continuous so $g_1=d=\int g_1h_{x}dm_x=\int g_1d\mu_x$ everywhere. Furthermore, it is not hard to see that $(\bar{\mathcal{L}}_x)^*(\mu_x)=\mu_x$ see theorem 2.4.4 \citep[~ p. 36]{MU}. Since we had
 $$\frac{1}{n}\sum_{j=1}^n\bar{\mathcal{L}}_{x}^jg \rightarrow g_1$$
 on a sub-sequence, then 
 $$\int g\text{d}\mu_x=\int \frac{1}{n}\sum_{j=1}^n\bar{\mathcal{L}}_{x}^jg \text{d}\mu_x \rightarrow \int g_1 \text{d}\mu_x,$$
 i.e. $\int g\text{d}\mu_x=\int g_1 \text{d}\mu_x$.
\end{proof}
\begin{proposition}\label{jordan}
The transfer operator $\mathcal{L}_s$ has at most finitely many eigenvalues of modulus $e^{P(x)}$ all of which with multiplicity one.
\end{proposition}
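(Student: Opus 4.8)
The plan is to separate the statement into its two halves: \emph{finiteness} of the eigenvalues on the critical circle, and \emph{simplicity} of each of them. The first half is essentially already done. By Proposition \ref{spectral radius L} we have $r_{\text{ess}}(\mathcal{L}_s)<e^{P(x)}$, so any point $\zeta\in\text{Sp}(\mathcal{L}_s)$ with $|\zeta|=e^{P(x)}$ lies strictly outside the essential spectrum; by the discussion following Definition \ref{essential spectrum def} every such $\zeta$ is an isolated eigenvalue of finite algebraic multiplicity, and isolated points cannot accumulate, so only finitely many of them can sit on the circle $|\zeta|=e^{P(x)}$ (they form a finite system of eigenvalues in Kato's sense). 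Hence the only real work is to show each such eigenvalue has algebraic multiplicity one.

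First I would reduce everything to the weighted operator. Since $0<c\le h_x$ by Lemma \ref{epsilon bounded below} and $h_x$ is bounded above (it is a $\|\cdot\|_\alpha$-limit of the uniformly bounded averages $\frac1{n_k}\sum_{j}e^{-P(x)}\mathcal{L}_x^j(\mathbbm{1})$), both $h_x$ and $1/h_x$ lie in $C^{0,\alpha}(E_A^{\mathbb{N}},\mathbb{C})$ by Proposition \ref{norm equivalence}; and $C^{0,\alpha}(E_A^{\mathbb{N}},\mathbb{C})$ is a Banach algebra under $\|\cdot\|_\alpha$, so multiplication by $h_x$ is a bounded invertible operator whose inverse is multiplication by $1/h_x$. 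Since $\mathcal{L}_s = e^{P(x)}\,M_{h_x}\,\bar{\mathcal{L}}_s\,M_{h_x}^{-1}$, the operators $\mathcal{L}_s$ and $e^{P(x)}\bar{\mathcal{L}}_s$ are similar, so the eigenvalues of $\mathcal{L}_s$ of modulus $e^{P(x)}$, together with their geometric and algebraic multiplicities, correspond exactly to the eigenvalues of $\bar{\mathcal{L}}_s$ of modulus $1$. Thus it suffices to show every unimodular eigenvalue of $\bar{\mathcal{L}}_s$ is simple.

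Now let $\gamma$ be an eigenvalue of $\bar{\mathcal{L}}_s$, $s=x+iy$, with $|\gamma|=1$ and eigenfunction $u$. Applying the triangle inequality term by term in the sum defining $\bar{\mathcal{L}}_s u(\rho)$ and using $h_x>0$ gives $|u|\le\bar{\mathcal{L}}_x|u|$ pointwise. Integrating against $\mu_x$, which satisfies $\bar{\mathcal{L}}_x^*\mu_x=\mu_x$ (see the proof of Lemma \ref{constant modulus}) and has full support, forces $|u|=\bar{\mathcal{L}}_x|u|$ $\mu_x$-a.e., hence everywhere by continuity; so $|u|$ is a fixed point of $\bar{\mathcal{L}}_x$, and Lemma \ref{constant modulus} applied to the nonnegative function $|u|$ (whose Ces\`aro averages equal $|u|$ identically) shows $|u|$ is constant, so I normalize $|u|\equiv1$. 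Since $\bar{\mathcal{L}}_x\mathbbm{1}=\mathbbm{1}$, the weights $w_e(\rho):=e^{-P(x)}\tfrac{h_x(e\rho)}{h_x(\rho)}e^{xf(e\rho)}$ are strictly positive and sum to $1$, and $\gamma u(\rho)=\sum_{e}w_e(\rho)\,e^{iyf(e\rho)}u(e\rho)$ writes a unit vector as a convex combination of unit vectors; strict convexity of the disc forces the rigidity equation $e^{iyf(e\rho)}u(e\rho)=\gamma u(\rho)$ for every admissible $e\rho$. If $u_1,u_2$ are two such (normalized) eigenfunctions, dividing their rigidity equations shows $v:=u_1/u_2$ satisfies $v(e\rho)=v(\rho)$, i.e. $v\circ\sigma=v$; a continuous $\sigma$-invariant function on the topologically mixing shift $E_A^{\mathbb{N}}$ is constant along a dense orbit, hence constant, so the eigenspace of $\gamma$ is one-dimensional.

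Finally, for algebraic multiplicity I would rule out rank-two generalized eigenvectors. The estimate $|\bar{\mathcal{L}}_s g|\le\bar{\mathcal{L}}_x|g|\le\|g\|_\infty\mathbbm{1}$ shows $\bar{\mathcal{L}}_s$ is a $\|\cdot\|_\infty$-contraction; if $\bar{\mathcal{L}}_s g=\gamma g+u$ then $\bar{\mathcal{L}}_s^n g=\gamma^n g+n\gamma^{n-1}u$, so $\|\gamma g+nu\|_\infty=\|\bar{\mathcal{L}}_s^n g\|_\infty\le\|g\|_\infty$ for all $n$, which is impossible since $|u|\equiv1$. Hence the algebraic multiplicity of $\gamma$ is one, completing the proof. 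The hard part is the middle step: upgrading the sub-invariance $|u|\le\bar{\mathcal{L}}_x|u|$ to the exact identity and then extracting the functional equation $e^{iyf(e\rho)}u(e\rho)=\gamma u(\rho)$ from the equality case of the triangle inequality; the finiteness, the conjugation, and the contraction estimate are all soft, and once the rigidity equation is in hand simplicity follows quickly from topological mixing.
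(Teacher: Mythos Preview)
Your proof is correct and follows essentially the same approach as the paper: finiteness via the essential spectrum gap, constant modulus of eigenfunctions via the weighted operator $\bar{\mathcal{L}}_s$ and Lemma \ref{constant modulus}, the rigidity equation from the equality case of the triangle inequality, and a growth argument to exclude nontrivial Jordan blocks. The only cosmetic differences are that the paper runs the Jordan-block step first (on $\mathcal{L}_s$ in the $\|\cdot\|_\alpha$-norm using \eqref{lasota}) and deduces one-dimensionality by observing that the rigidity equation determines $g$ on the dense backward orbit $\cup_n\sigma^{-n}(\rho)$, whereas you use the $\|\cdot\|_\infty$-contraction of $\bar{\mathcal{L}}_s$ and the $\sigma$-invariance of $u_1/u_2$; these are equivalent formulations of the same ideas.
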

\begin{proof}
The previous proposition implies there are at most finitely many spectral values of $\mathcal{L}_s$ with modulus $e^{P(x)}$ all are isolated eigenvalues with finite (algebraic) multiplicity, see definition \ref{essential spectrum def}.  We would like to show first for each eigenvalues $\lambda$ with $|\lambda|=e^{P(x)}$ the transfer operator $\mathcal{L}_s$ acts on $X:=\mathcal{P}_{\lambda,s}(C^{0,\alpha}(E_A^{\mathbb{N}},\mathbb{C}))=\cup_{m\geq 1} \ker(\mathcal{L}_s - \lambda)^m$ diagonally. To see this we consider the Jordan normal form of $L:=\mathcal{L}_s$ on finite-dimensional space $X$, so there is an invertible transformation $P$, such that $PLP^{-1}$ is the Jordan normal form of $L$. Consider a $k\times k$ Jordan block in matrix representation that has $1$ above the diagonal. The $n^{th}$ power of the block looks like
\[
\begin{bmatrix}
    \lambda^n & \binom{n}{1} \lambda^{n-1} & \binom{n}{2}   \lambda^{n-2} & \dots  & \binom{n}{k-1} \lambda^{n-k+1} \\
              & \lambda^n & \binom{n}{1} \lambda^{n-1} & \dots  & \binom{n}{k-2} \lambda^{n-k+2} \\
          &           & \lambda^n  & \dots  & \binom{n}{k-3} \lambda^{n-k+3}  \\
           &   &   & \ddots & \vdots \\
       &   &   &    & \lambda^n
\end{bmatrix}.
\]
Then for 
$e=\begin{bmatrix}
0 & 0 & ... & 0 & 1
\end{bmatrix}^T$ 
we have
\[
\begin{bmatrix}
    \lambda^n & \binom{n}{1} \lambda^{n-1} & \binom{n}{2}   \lambda^{n-2} & \dots  & \binom{n}{k-1} \lambda^{n-k+1} \\
              & \lambda^n & \binom{n}{1} \lambda^{n-1} & \dots  & \binom{n}{k-2} \lambda^{n-k+2} \\
          &           & \lambda^n  & \dots  & \binom{n}{k-3} \lambda^{n-k+3}  \\
           &   &   & \ddots & \vdots \\
       &   &   &    & \lambda^n
\end{bmatrix}e
=
\begin{bmatrix}
 \binom{n}{k-1} \lambda^{n-k+1}\\
 \binom{n}{k-2} \lambda^{n-k+2}\\
 \binom{n}{k-3} \lambda^{n-k+3}\\
 \vdots\\
 \lambda^n
\end{bmatrix}.
\]
Notice that $\binom{n}{1} \lambda^{n-1}$ is the $(k-1)^{th}$ coordinate of this vector. If we equip $X$ with the norm
$$\| x\|=|x_1|+...+|x_t|, \; \; \; t=\dim X, $$
and if we view $e$ and the the above vector in $X$, we will have:
$$\binom{n}{1} |\lambda|^{n-1} \leq \| PL^nP^{-1}e\| \leq \|PL^nP^{-1}\| \leq \|P\| \| L^n\| \|P^{-1}\| \leq C_0 |\lambda|^n$$
for some constant $C_0$, where the last inequality holds by proposition \ref{spectral radius L} and because on finite-dimensional space all the norms are equivalent. This is clearly a contradiction. Therefore there is no non-trivial Jordan block, i.e. $L$ is diagonalizable. This implies 
$$X=\ker (\mathcal{L}_s-\lambda).$$
It is clear that if $g$ is in $\ker(\mathcal{L}_s-\lambda)$ then $g/h_{x}$ is in $\ker(\bar{\mathcal{L}}_s-e^{-P(x)}\lambda)$. Therefore to show each $\ker(\mathcal{L}_s-\lambda)$ is one dimensional, it is enough to show $\ker(\bar{\mathcal{L}}_s-e^{-P(x)}\lambda)$ is one dimensional. Let $g \in \ker(\bar{\mathcal{L}}_s-e^{-P(x)}\lambda)$, for each $n$ 
$$|g|=|e^{-P(x)}\lambda g|=|\bar{\mathcal{L}}_s^ng|\leq \bar{\mathcal{L}}_{x}^n|g|.$$
Therefore if we apply the above lemma to the function $|g|$ we obtain
$$|g|\leq \int |g| d\mu_x.$$
Continuity of $g$ and the fact that $\text{supp}(\mu_{x})=E_A^{\mathbb{N}}$ (see explanation below the definition \ref{def Gibbs}) makes this inequality into equality, i.e. every eigenvector has constant modulus. It is not hard to see that 
$$\bar{\mathcal{L}}_s^ng(\rho)=$$
$$\frac{e^{-nP(x)}}{h_{x}(\rho)}\sum_{\omega \in E_A^n}\exp(sS_nf(\omega \rho))\dfrac{1}{h_{x}(\sigma^{n-1}\omega\rho)}\dfrac{1}{h_{x}(\sigma^{n-2}\omega\rho)}...\dfrac{1}{h_{x}(\sigma\omega\rho)}h_{x}(\omega\rho)g(\omega\rho).$$
Moreover since $\bar{\mathcal{L}}_{x}(\mathbb{1})=\mathbb{1}$ we get:
$$1=\bar{\mathcal{L}}_{x}^n(\mathbb{1})(\rho)=$$
$$\sum_{\omega \in E_A^n}e^{-nP(x)}\frac{1}{h_{x}(\rho)}\exp(x S_nf(\omega \rho))\dfrac{1}{h_{x}(\sigma^{n-1}\omega\rho)}\dfrac{1}{h_{x}(\sigma^{n-2}\omega\rho)}...\dfrac{1}{h_{x}(\sigma\omega\rho)}h_{x}(\omega\rho).$$
Note that every term in this sum, say $u_{\omega}$, is positive. Eventually, we find:
$$e^{-nP(x)}\lambda^ng(\rho)=\bar{\mathcal{L}}_s^ng(\rho)= \sum_{\omega \in E_A^n}u_{\omega}\exp(iy S_nf(\omega \rho))g(\omega \rho).$$
Now note that $|\sum_j a_j|=\sum_j|a_j|$ implies all $a_j$ are co-linear, this along with the fact that $g$ has constant modulus we get 
$$g(\omega \rho)=e^{-nP(x)}\lambda^n\exp(-iy S_nf(\omega \rho))g(\rho). $$
This means values of $g$ on the dense set $\cup_n\sigma^{-n}(\rho)$ (see remark below the definition \ref{finitely irreducible}) is determined by $g(\rho)$, so $g$ spans $\ker(\bar{\mathcal{L}}_s-e^{-P(x)}\lambda)$ as long as $g$ has at least one non-zero point. This shows $\lambda$ is a simple eigenvalue and it finishes the proof.
\end{proof}
Thus everything is ready to obtain spectral representation of $\mathcal{L}_s$ corresponding to the eigenvalues $\lambda_1, \lambda_2, ...,  \lambda_p$ of modulus $e^{P(x)}$. We use the above proposition to see that for each $s=x+iy \in \Gamma^+$, $\mathcal{L}_s$ has only finitely many eigenvalues $\lambda_1(s),...\lambda_n(s)$ of modulus $e^{P(x)}$ each of which isolated in the spectrum and actually they are all simple eigenvalues. Therefore we may use theorem \ref{kato theorem} to obtain the following spectral representation of the transfer operator:
$$ \mathcal{L}_s=\lambda_1(s)\mathcal{P}_{1,s}+ \lambda_2(s)\mathcal{P}_{2,s} + ...+ \lambda_n(s)\mathcal{P}_{n,s}+\mathcal{D}_{s},$$
where each $\mathcal{P}_{i,s}$ is projection. Note that in this equation the operators are analytic operators and eigenvalues are analytic functions. Further, the composition of every two different operators on the right-hand side vanishes by proposition \ref{spectral mapping theorem}. This yields 
\begin{equation}\label{spectral decomposition sum}
\mathcal{L}_s^n=\lambda_1(s)^n\mathcal{P}_{1,s}+ \lambda_2(s)^n\mathcal{P}_{2,s} + ...+ \lambda_n(s)^n\mathcal{P}_{n,s}+\mathcal{D}_{s}^n.
\end{equation} 
Finally proposition \ref{spectral mapping theorem} implies:
$$\text{Sp}(\mathcal{L}_s)\cup\{0\}=\{\lambda_1(s)\}\cup \{\lambda_2(s)\}\cup ... \cup\{\lambda_n(s)\}\cup\text{Sp}(\mathcal{D}_s)\cup\{0\}.$$
We finish this section with the following lemma.
\begin{lemma}\label{spectral D^m}
For every $s_0$ on the line $x=1$, there is a neighborhood $U$ of $s_0$, $0<\beta<1$ and constant $C>0$ such that for every positive integer $m$
$$\|\mathcal{D}_{s}^m\|_{\alpha} \leq C \beta^m, \; \; \; s \in U.$$
\end{lemma}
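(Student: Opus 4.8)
The plan is to write $\mathcal{D}_s^{\,m}$ as a Riesz contour integral of the resolvent of $\mathcal{L}_s$ over a circle that stays strictly between the peripheral eigenvalues $\lambda_i(s)$ and the rest of the spectrum, \emph{uniformly} for $s$ in a neighbourhood of $s_0$, and then to estimate that integral directly.

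First I would set up the functional-calculus picture. By Theorem~\ref{kato theorem} the map $s\mapsto\mathcal{D}_s=\mathcal{L}_s-\sum_{i=1}^{n}\lambda_i(s)\mathcal{P}_{i,s}$ is holomorphic, in particular norm-continuous, on a neighbourhood of $s_0$, and by Proposition~\ref{spectral mapping theorem} we have $\text{Sp}(\mathcal{D}_s)=\bigl(\text{Sp}(\mathcal{L}_s)\setminus\{\lambda_1(s),\dots,\lambda_n(s)\}\bigr)\cup\{0\}$; in particular $\text{Sp}(\mathcal{L}_s)\setminus\{\lambda_1(s),\dots,\lambda_n(s)\}\subseteq\text{Sp}(\mathcal{D}_s)$. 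Since $s_0$ lies on the line $x=1$ and $P(1)=0$, each $\lambda_i(s_0)$ has modulus $e^{P(1)}=1$, while Proposition~\ref{spectral radius L} gives $r(\mathcal{L}_{s_0})\le e^{P(1)}=1$. Hence $\text{Sp}(\mathcal{D}_{s_0})$, being $\text{Sp}(\mathcal{L}_{s_0})$ with its finitely many modulus-$1$ points removed (together with $0$), is a compact subset of the open unit disk, so $\rho_0:=r(\mathcal{D}_{s_0})<1$.

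Next I would fix radii $\rho_0<\beta_1<\beta_2<\beta_3<1$ and propagate the spectral gap. The spectral radius $\mathcal{A}\mapsto r(\mathcal{A})=\inf_n\|\mathcal{A}^n\|_\alpha^{1/n}$ is upper semicontinuous on $\mathfrak{B}$ (an infimum of continuous functions), so $\{\mathcal{A}:r(\mathcal{A})<\beta_1\}$ is open, and by continuity of $s\mapsto\mathcal{D}_s$ there is a neighbourhood of $s_0$ on which $r(\mathcal{D}_s)<\beta_1$; and since $P$ is continuous with $P(1)=0$, there is a neighbourhood of $s_0$ on which $e^{P(\operatorname{Re}s)}>\beta_3$. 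Let $U\subseteq\Gamma^+$ be a relatively compact neighbourhood of $s_0$ contained in both. Then for every $s\in\overline{U}$ the circle $\Gamma:=\{\zeta:|\zeta|=\beta_2\}$ is disjoint from $\text{Sp}(\mathcal{L}_s)$: the peripheral eigenvalues $\lambda_i(s)$ have modulus $e^{P(\operatorname{Re}s)}>\beta_3>\beta_2$, while $\text{Sp}(\mathcal{L}_s)\setminus\{\lambda_1(s),\dots,\lambda_n(s)\}\subseteq\text{Sp}(\mathcal{D}_s)\subseteq\{|\zeta|<\beta_1<\beta_2\}$. Applying the Riesz functional calculus to the function equal to $\zeta^m$ near the part of $\text{Sp}(\mathcal{L}_s)$ inside $\Gamma$ and to $0$ near the peripheral eigenvalues, we obtain, for all $s\in U$ and all integers $m\ge1$,
\[
\mathcal{D}_s^{\,m}=\frac{1}{2\pi i}\oint_{\Gamma}\zeta^{m}\,(\zeta I-\mathcal{L}_s)^{-1}\,d\zeta .
\]
Finally I would estimate: $(\zeta,s)\mapsto(\zeta I-\mathcal{L}_s)^{-1}$ is continuous on the compact set $\Gamma\times\overline{U}$, which lies in the resolvent set by the previous step, so $C_0:=\sup\{\|(\zeta I-\mathcal{L}_s)^{-1}\|_\alpha:\zeta\in\Gamma,\ s\in\overline{U}\}<\infty$, and bounding the integral gives $\|\mathcal{D}_s^{\,m}\|_\alpha\le(2\pi)^{-1}(2\pi\beta_2)\beta_2^{\,m}C_0=C_0\beta_2^{\,m+1}$ for $s\in U$, $m\ge1$; so the lemma holds with $\beta:=\beta_2\in(0,1)$ and $C:=C_0\beta_2$.

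The only step that is not routine bookkeeping is producing a single separating circle $\Gamma$ that works for all $s$ in a whole neighbourhood of $s_0$. This is exactly where $P(1)=0$ is used: it pins the peripheral eigenvalues to the unit circle while the spectral-radius bound of Proposition~\ref{spectral radius L} confines the rest of the spectrum to the closed unit disk; combined with the stability of the gap under the holomorphic perturbation $s\mapsto\mathcal{L}_s$ — equivalently, upper semicontinuity of $r(\cdot)$, or the persistence of the Riesz projection in Theorem~\ref{kato theorem} — this gives the uniform enclosure. Once $\Gamma$ and $U$ are fixed, the estimate is a standard resolvent bound.
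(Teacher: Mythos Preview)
Your argument is correct, but it takes a genuinely different route from the paper. You use the Riesz functional calculus: after showing that a circle $\Gamma=\{|\zeta|=\beta_2\}$ separates the peripheral eigenvalues from the rest of $\text{Sp}(\mathcal{L}_s)$ uniformly for $s\in U$, you write $\mathcal{D}_s^{\,m}$ as a contour integral of $\zeta^m(\zeta I-\mathcal{L}_s)^{-1}$ over $\Gamma$ and bound the resolvent uniformly on the compact set $\Gamma\times\overline{U}$. The paper instead argues in a completely elementary way: from $r(\mathcal{D}_{s_0})<\beta<1$ it picks a single integer $q$ with $\|\mathcal{D}_{s_0}^q\|_\alpha\le\beta/2$, uses norm-continuity of $s\mapsto\mathcal{D}_s^q$ to get $\|\mathcal{D}_s^q\|_\alpha\le\beta$ on a neighbourhood $U$, and then writes $m=lq+r$ and invokes submultiplicativity, $\|\mathcal{D}_s^m\|_\alpha\le\|\mathcal{D}_s^q\|_\alpha^{\,l}\|\mathcal{D}_s^r\|_\alpha\le C(\beta^{1/q})^m$. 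Your approach is cleaner conceptually and gives an explicit integral representation for $\mathcal{D}_s^{\,m}$, at the cost of invoking upper semicontinuity of the spectral radius and the holomorphic functional calculus; the paper's argument avoids contour integrals entirely and uses nothing beyond Gelfand's formula and continuity, at the cost of an artificial-looking exponent $\beta^{1/q}$. Either way the key input is the same: $r(\mathcal{D}_{s_0})<1$ together with some stability under perturbation.
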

\begin{proof}
The above spectral decomposition implies the spectral radius of $\mathcal{D}_s$ to be strictly less than that of $\mathcal{L}_s$. Furthermore, proposition \ref{spectral radius L} implies $r(\mathcal{L}_{s_0})\leq e^{P(1)}=1$ so for $s_0$ there is $0<\beta<1$ such that $r(\mathcal{D}_{s_0})<\beta.$
Thus there is constant $C_1$ and natural number $q$ such that $$\|\mathcal{D}_{s_0}^q\|_{\alpha}\leq C_1\beta^q \leq \frac{\beta}{2}.$$ 
Additionally, using continuity on a small enough ball $U$ at $s_0$  we have 
$$\|\mathcal{D}_{s}^q-\mathcal{D}_{s_0}^q\|_{\alpha}<\frac{\beta}{2}.$$  
Combining these two recent inequalities yields  $\|\mathcal{D}_{s}^q\|_{\alpha}\leq \beta$ on $U$. Furthermore, there is constant $C_2$ such that for each integer r with $0\leq r <q$,  we have $\|\mathcal{D}_{s}^r\|_{\alpha}\leq C_2$ on  $U$. Since for each positive integer $m$ we can write $m=lq+r$, we eventually get for some $C>0$:
\begin{equation}\label{spectral radius tend to 0}
\|\mathcal{D}_{s}^m\|_{\alpha}\leq C(\beta^{1/q})^m
\end{equation} 
on $U$.
\end{proof}
\section{Counting Function and Poincar\'{e} series}
Given $\rho \in E_A^{\mathbb{N}}$ and $B \subseteq E_A^{\mathbb{N}}$, for every $T>0$ we define several \textbf{counting functions}. \\
(a) The central counting function for us is
\begin{equation}\label{eq1}
N_{\rho}(B,T):=\#\{ \omega \in \cup_{n=1}^{\infty} E_A^n \; : \; \omega\rho \text{ admissible},\; \omega\rho \in B, \;  S_{|\omega|}f(\omega\rho)\geq - T \}
\end{equation}
It is not so hard to see that this is a step function of $T$, continuous from right and increasing. In order to associate a complex function to this counting function we set $N_{\rho}(B,T)=0$ for $T<0$ and we consider the Laplace–Stieltjes transform of $T \mapsto N_{\rho}(B,T)$ which we call it \textbf{Poincar\'{e} series}:
$$\eta_{\rho}(B,s):=\int _0 ^{\infty} \exp (-sT)\text{d}N_{\rho}(B,T).$$
We will talk about its convergence in the next proposition. Below we introduce other counting functions appropriate for our purposes.\\
(b) Let $H=\{\tau_{(i)}\}_{i \in I}$ be a countable (finite or infinite) collection of finite words of bounded length, i.e. there exists a positive integer $k$ such that $|\tau_{(i)}|\leq k$ for each $i\in I$. Further, assume the cylinders $\{[\tau_{(i)}]\}_{i \in I}$ are mutually disjoint. We denote 
$$[H]:=\cup_{i \in I} [\tau_{(i)}],$$ 
then the corresponding Poincar\'{e} series is of the form
$$\eta_{\rho}([H],s)=\int _0 ^{\infty} \exp(-sT) \text{d}N_{\rho}([H],T). $$
$$=\sum _{n=1} ^{\infty} \exp(-sT_i)\left(N_{\rho}([H],T_i)-N_{\rho}([H],T_{i-1})\right)$$
where $T_1 < T_2 < T_3 <...$ is the increasing sequence of discontinuities of $T \mapsto N_{\rho}([H],T)$. Eventually this sums up to 
\begin{equation}\label{eta}
\eta_{\rho}([H],s)= \sum_{n=1} ^{\infty} \sum\limits_{\substack{ \omega \rho \in [H]}}\exp(sS_nf(\omega \rho))=\sum_{n=1} ^{\infty} \mathcal{L}_s ^n (\mathbb{1}_{[H]})(\rho).
\end{equation}
(c) If we require to count only words with certain initial blocks then we should define
$$ N_{\rho}(H,T):=\#\{\omega \in \cup_{n=1}^{\infty} E_A^n : \tau \in H, \; \tau\omega\rho \; \text{ admissible },  \; S_{|\tau\omega|}f(\tau \omega \rho) \geq - T \}.$$
Then similarly one can see that the corresponding Poincar\'{e} series has the form
$$\eta_{\rho}(H,s)=\sum_{n=1}^{\infty}\mathcal{L}_s ^{k+n} (\mathbb{1}_{[H]})(\rho).$$
Therefore
\begin{equation}\label{eta relation}
\eta_{\rho}([H],s)=\eta_{\rho}(H,s)+\sum_{n=1}^{k}\mathcal{L}_s ^{n} (\mathbb{1}_{[H]})(\rho).
\end{equation}
(d) For any positive integer $q$ we set
$$N_{\rho}([H],q,T):=\#\{\omega \in E_A^q : \omega\rho \; \text{ admissible}, \; \omega\rho \in [H], \;  S_{|\omega|}f(\omega\rho)\geq-T\},$$
then its Poincar\'{e} series would be 
$$\eta_{\rho}([H],q,s)=\mathcal{L}_s^q(\mathbb{1}_{[H]})(\rho).$$
(e) Further we would like to deal with periodic words as well. For this purpose we define
$$N_{\text{per}}([H], T):=\#\{\omega \in \cup_{n=1}^{\infty} E_A^n \;  : \; \omega\;  \text{periodic word},  \; \overline{\omega} \in [H], \; S_{|\omega|}f(\overline{\omega})\geq -T \},$$
(f) And
$$N_{\text{per}}(H, T):=\#\{\omega \; : \tau \in H, \; \; \tau \omega\;  \text{periodic word}, \; S_{|\tau \omega|}f(\overline{\tau \omega})\geq -T \}.$$
(g) Finally we introduce another counting function for any positive integer $q$:
$$N_{\text{per}}([H], q, T):=\#\{\omega : \omega\;  \text{periodic word of length $q$},  \; \overline{\omega} \in [H], \; S_{|\omega|}f(\overline{\omega})\geq -T \}.$$
(h) If $H=E$ then obviously $[H]$ is the whole space $E_A^{\mathbb{N}}$. In this case, we drop the notation $E_A^{\mathbb{N}}$ in $N_{\rho}(E_A^{\mathbb{N}},T)$ and we simply write $N_{\rho}(T)$, similarly $N_{\text{per}}(T)$.\\
Next, we want to find some relations between these counting functions. Note that we do not introduce a Poincar\'{e} series for the periodic orbits, as it won't have an ordinary geometric series expression, and therefore Tauberian theorems are not applicable. Instead, we use some approximations. Now for every finite word $\omega$ we pick (exactly) one $\omega^+ \in E_A^{\mathbb{N}}$ such that $\omega\omega^+$ is admissible. From now on in this section, we assume $\tau$ is a fixed word of length $k\geq 0$. When $k=0$ we mean there is no word involved.
\begin{lemma}
Let $q$ be a positive integer, and $\gamma \in E_A^q$ be any word of length $q$. Given any $\omega$ where $\tau\gamma\omega$ is admissible and  it is further a periodic word, then we have
$$|S_{|\tau\gamma\omega|}f(\overline{\tau\gamma\omega})-S_{|\tau\gamma\omega|}f(\tau\gamma\omega\tau\gamma(\tau\gamma)^+)|\leq Ke^{-(k+q)\alpha},$$
where $K$ only depends on $f$.
\end{lemma}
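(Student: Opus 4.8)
The plan is to exploit that the two infinite words in the statement share a long common initial block, and then to control the difference of the two Birkhoff sums term by term using the Hölder-type coefficient $V_{\alpha}(f)$.

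First I would fix notation: write $p=|\omega|$, $n=|\tau\gamma\omega|=k+q+p$, and set $\rho_1=\overline{\tau\gamma\omega}$ and $\rho_2=\tau\gamma\omega\tau\gamma(\tau\gamma)^+$. Since $\rho_1$ is the periodic sequence $\tau\gamma\omega\,\tau\gamma\omega\,\tau\gamma\omega\cdots$, its first $n$ symbols are $\tau\gamma\omega$ and its next $k+q$ symbols are $\tau\gamma$; in particular $\tau\gamma\omega\tau\gamma$ is an admissible word, and $\rho_2$ by construction begins with exactly this same block of length $n+k+q$. Hence $|\rho_1\wedge\rho_2|\geq n+k+q$ (the degenerate case $k=0$, in which there is no word $\tau$, is included in this bookkeeping). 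Next I would expand $S_nf(\rho_i)=\sum_{j=0}^{n-1}f(\sigma^j\rho_i)$ and compare summand by summand: for each $j\in\{0,\dots,n-1\}$ the sequences $\sigma^j\rho_1$ and $\sigma^j\rho_2$ agree on their first $(n+k+q)-j$ letters, a quantity that is at least $k+q+1\geq 1$, so Definition \ref{HO type} yields
$$|f(\sigma^j\rho_1)-f(\sigma^j\rho_2)|\leq V_{\alpha}(f)\,e^{-\alpha(n+k+q-j-1)}.$$

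Summing over $j$ and reindexing by $l=n-j$ collapses the bound to a geometric series:
$$|S_nf(\rho_1)-S_nf(\rho_2)|\leq V_{\alpha}(f)\,e^{-\alpha(k+q)}\sum_{l=1}^{n}e^{-\alpha(l-1)}\leq \frac{V_{\alpha}(f)}{1-e^{-\alpha}}\,e^{-(k+q)\alpha},$$
so the assertion holds with $K=V_{\alpha}(f)/(1-e^{-\alpha})$, which depends only on $f$ (through $V_{\alpha}(f)$ and the fixed exponent $\alpha$). Alternatively one can invoke Lemma \ref{bounded S_n} directly with the common prefix word $\tau\gamma\omega\in E_A^n$, whose two admissible extensions $\rho_1$ and $\rho_2$ share the block $\tau\gamma$ past position $n$, to get a bound of the same shape. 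I do not anticipate any essential difficulty here; the only points needing care are the bookkeeping of which symbols of $\sigma^j\rho_1$ and $\sigma^j\rho_2$ coincide, the index shift in the geometric sum, and the verification that the $k=0$ case is covered.
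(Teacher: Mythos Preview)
Your proof is correct and is essentially the paper's argument: the paper simply invokes Lemma~\ref{bounded S_n} with the common prefix $\tau\gamma\omega$ and the two tails $\overline{\tau\gamma\omega}$ and $\tau\gamma(\tau\gamma)^+$, which agree on their first $k+q$ symbols, yielding the bound $K_f e^{-(k+q)\alpha}$ in one line---exactly what you list as your ``alternative.'' Your primary approach just unpacks the proof of Lemma~\ref{bounded S_n} term by term in this particular case, arriving at the same constant $K=V_\alpha(f)/(1-e^{-\alpha})$.
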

\begin{proof}
It is enough to apply Lemma \ref{bounded S_n}:
$$|S_{|\tau\gamma\omega|}f(\overline{\tau\gamma\omega})-S_{|\tau\gamma\omega|}f(\tau\gamma\omega\tau\gamma(\tau\gamma)^+)|\leq K_{f}d(\overline{\tau\gamma\omega}, \tau\gamma(\tau\gamma)^+)^{\alpha} \leq K_{f}e^{-(k+q)\alpha}.$$
\end{proof}

\begin{lemma}\label{bound for periodic}
Let $q$ be a positive integer, then the following inequalities hold:
\begin{itemize}
    \item[(i)] $$ N_{\text{per}}([\tau],q, T) \leq N_{\tau \tau^+}([\tau],q, T+K),$$
    \item[(ii)] $$\sum\limits_{\substack{\gamma \in E_A^q \\ \tau \gamma \in E_A^{k+q}}} N_{\tau\gamma(\tau\gamma)^+}(\tau\gamma, T-Ke^{-(k+q)\alpha}) \leq N_{\text{per}}(\tau,T),$$
    \item[(iii)] $$N_{\text{per}}([\tau],T) \leq  \sum\limits_{\substack{\gamma \in E_A^q \\ \tau\gamma \in E_A^{k+q}}} N_{\tau\gamma(\tau\gamma)^+}([\tau\gamma], T+Ke^{-(k+q)\alpha}),$$
    \item[(iv)] For $i\geq k+q$ 
    $$N_{\tau\gamma(\tau\gamma)^+}([\tau\gamma],i,T) \leq N_{\tau\tau^+}([\tau\gamma],i,T+K),$$
    \item[(v)] If $F$ is any finite subset of $E^q_A$ and $F'=E^q_A \setminus F$, then
    $$N_{\text{per}}([\tau],T) \leq   \sum\limits_{\substack{ \gamma \in F \\ \tau\gamma \in E_A^{k+q}}} N_{\tau\gamma(\tau\gamma)^+}([\tau\gamma], T+Ke^{-(k+q)\alpha})$$
    $$+ \sum\limits_{\substack{ \gamma \in F' \\ \tau\gamma \in E_A^{k+q}}} N_{\tau\tau^+}([\tau\gamma], T+2K)+\sum_{i=1}^{k+q-1} N_{\tau\tau^+}([\tau],i,T+K)$$
\end{itemize}
where $K$ only depends on $f$.
\end{lemma}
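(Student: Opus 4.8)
The plan is to prove all five inequalities by the same device. In each case one constructs an explicit injection realizing the inequality — from the index set of the counting function on one side into the index set of the counting function on the other (with disjoint unions inserted on whichever side is a finite sum) — then checks that admissibility and the relevant cylinder-membership conditions survive the map, and finally estimates the change in the Birkhoff sum $S_{(\cdot)}f$ by a uniform constant. Every such estimate is an application of Lemma \ref{bounded S_n} or of the lemma immediately above: two infinite words sharing a long common prefix and then continued by the same finite block have Birkhoff sums that differ by at most a constant (geometrically small in the length of the shared prefix). Because we only need upper bounds, overcounting on the target side is harmless. At the end one enlarges the constant $K$ once so that it dominates $K_f$ and the constant of the preceding lemma.

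For (i): send a periodic $\omega\in E_A^q$ with $\overline\omega\in[\tau]$ to the word $\omega$ read with continuation $\tau\tau^+$. Then $\omega\tau\tau^+$ is admissible ($\omega_q\omega_1=\omega_q\tau_1$ is admissible, $\tau\tau^+$ is admissible) and lies in $[\tau]$ (immediate when $q\ge k$, a short check using periodicity of $\tau$ when $q<k$); since $\overline\omega$ and $\tau\tau^+$ share the block $\tau$, Lemma \ref{bounded S_n} with common block $\omega$ gives $|S_qf(\overline\omega)-S_qf(\omega\tau\tau^+)|\le K_f$, so the threshold $-T$ relaxes to $-(T+K)$. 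For (ii): the map $(\gamma,\omega)\mapsto\gamma\omega$ is injective into the index set of $N_{\text{per}}(\tau,T)$ (recover $\gamma$ as the first $q$ letters), admissibility of $\tau\gamma\omega\tau\gamma(\tau\gamma)^+$ forces $\tau\gamma\omega$ to be a periodic word, and the preceding lemma gives $|S_{|\tau\gamma\omega|}f(\overline{\tau\gamma\omega})-S_{|\tau\gamma\omega|}f(\tau\gamma\omega\tau\gamma(\tau\gamma)^+)|\le Ke^{-(k+q)\alpha}$, which is exactly the slack already built into the left-hand counting function. For (iii): a periodic $\omega$ with $\overline\omega\in[\tau]$ determines $\gamma:=(\overline\omega)_{k+1}\cdots(\overline\omega)_{k+q}$; read with continuation $\tau\gamma(\tau\gamma)^+$, $\omega$ is admissible, lies in $[\tau\gamma]$, and, since $\overline\omega$ and $\tau\gamma(\tau\gamma)^+$ agree on the block $\tau\gamma$, Lemma \ref{bounded S_n} yields $|S_{|\omega|}f(\overline\omega)-S_{|\omega|}f(\omega\tau\gamma(\tau\gamma)^+)|\le Ke^{-(k+q)\alpha}$; the only point needing care is that these claims persist when $|\omega|<k+q$, which holds because $\tau\gamma$ is then periodic of period $|\omega|$. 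For (iv): the identity on words $\omega\in E_A^i$ does the job once $i\ge k+q$, for then the conditions $\omega\tau\gamma(\tau\gamma)^+\in[\tau\gamma]$ and $\omega\tau\tau^+\in[\tau\gamma]$ are the same constraint on $\omega$, admissibility transfers, and $|S_if(\omega\tau\gamma(\tau\gamma)^+)-S_if(\omega\tau\tau^+)|\le K_f$ because both tails start with $\tau$.

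For (v): split the periodic words $\omega$ counted by $N_{\text{per}}([\tau],T)$ according to whether $|\omega|\ge k+q$ or $|\omega|<k+q$, and, in the first case, further by $\gamma:=(\overline\omega)_{k+1}\cdots(\overline\omega)_{k+q}$. Words with $|\omega|=i<k+q$ are handled as in (i) while keeping track of $i$, contributing $\sum_{i=1}^{k+q-1}N_{\tau\tau^+}([\tau],i,T+K)$. For $|\omega|\ge k+q$ with $\gamma\in F$, the argument of (iii) places $\omega$ in the index set of $N_{\tau\gamma(\tau\gamma)^+}([\tau\gamma],T+Ke^{-(k+q)\alpha})$, a term already listed. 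For $|\omega|\ge k+q$ with $\gamma\in F'$, (iii) places $\omega$ at its length $i=|\omega|$ in $N_{\tau\gamma(\tau\gamma)^+}([\tau\gamma],i,T+Ke^{-(k+q)\alpha})$, and then (iv) — applicable precisely because $i\ge k+q$ — dominates this by $N_{\tau\tau^+}([\tau\gamma],i,T+2K)$; summing over $i$ gives $N_{\tau\tau^+}([\tau\gamma],T+2K)$. Adding the three families of contributions yields (v), and injectivity is clear: words routed to different $\gamma$'s differ in positions $k+1,\dots,k+q$, the short words are separated from the rest by length, and within each target set the map is the identity.

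The routine parts are the admissibility and cylinder bookkeeping and choosing which form of the Birkhoff-sum estimate to quote. The one genuinely delicate point — and the step I expect to be the main obstacle — is handling the words of length $<k+q$ uniformly throughout: this is precisely why (iv) is stated only for $i\ge k+q$ and why (v) must carry the extra finite sum $\sum_{i=1}^{k+q-1}N_{\tau\tau^+}([\tau],i,T+K)$. Once that accounting is set up carefully the rest is direct verification.
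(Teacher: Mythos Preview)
Your proposal is correct and follows essentially the same approach as the paper's proof: in each part you build the same injection (identity on the word $\omega$, with the continuation changed from $\overline{\omega}$ to $\tau\tau^+$ or $\tau\gamma(\tau\gamma)^+$ as appropriate), you verify admissibility and cylinder membership by the same case split on $|\omega|\gtrless k+q$, and you control the change in $S_{|\omega|}f$ via Lemma~\ref{bounded S_n} and the preceding lemma. Your decomposition in (v) into short words (handled by (i)), long words with $\gamma\in F$ (handled by (iii)), and long words with $\gamma\in F'$ (handled by (iii) then (iv)) is exactly the paper's argument, and you have correctly isolated the one delicate point, namely the separate treatment of lengths $<k+q$.
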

\begin{proof}
(i) Let $\omega$ be a finite word contributing to $N_{per}([\tau],q,T)$, then $|\omega|=q$. The fact that $\overline{\omega} \in [\tau]$ gives $\omega_1=\tau_1$ Therefore since $\omega_q\omega_1$ is admissible, so is $\omega\tau\tau^+$. If $q\geq k=|\tau|$, since $\overline{\omega} \in [\tau]$ so is $\omega\tau\tau^+$, and if $q<k$, since $\overline{\omega} \in [\tau]$, we can write $\tau$ as $m$ copies of $\omega$ and some remainders, i.e. $\tau=\omega^m\omega_1...\omega_r$. It is clear then the first $k$ letters of $\omega^{m+1}\omega_1...\omega_r$ is again $\tau$. Thus $\omega\tau\tau^+ \in [\tau]$. It remains to show $S_{|\omega|}\psi(\omega\tau\tau^+)\geq-T-K$. From our assumption $S_{|\omega|}\psi(\overline{\omega})\geq-T$, we can apply Lemma \ref{bounded S_n} to see that 
$$S_{|\omega|}\psi(\overline{\omega})\leq S_{|\omega|}\psi(\omega\tau\tau^+)+K.$$
This finishes the proof for part (i).\\
(ii) Let $\gamma$ be a word of length $q$ with $\tau\gamma$ admissible. Let $\omega$ be a finite word contributing to $N_{\tau\gamma(\tau\gamma)^+}(\tau\gamma, T-Ke^{-(k+q)\alpha})$, we want to show $\gamma\omega$ contributes to $N_{per}(\tau,T)$. It is clear that this way of contribution is injective, which proves (ii). Since $\tau\gamma\omega\tau\gamma(\tau\gamma)^+$ is admissible, so is $\overline{\tau\gamma\omega}$. Furthermore, we know $S_{|\tau\gamma\omega|}\psi(\tau\gamma\omega\tau\gamma(\tau\gamma)^+)\geq -T+Ke^{-(k+q)\alpha}$. If we use the above lemma we find 
$$-T+Ke^{-(k+q)\alpha}\leq S_{|\tau\gamma\omega|}\psi(\tau\gamma\omega\tau\gamma(\tau\gamma)^+) \leq S_{|\tau\gamma\omega|}\psi(\overline{\tau\gamma\omega})+Ke^{-(k+q)\alpha},$$
which shows $-T \leq S_{|\tau\gamma\omega|}\psi(\overline{\tau\gamma\omega})$ as needed.\\
(iii). Let $\omega$ be a finite word contributing to $N_{per}([\tau],T)$ of length $n$. The fact that $\overline{\omega} \in [\tau]$ gives $\omega_1=\tau_1$ Therefore since $\omega_n\omega_1$ is admissible, so is $\omega\tau$. Note that 
$$[\tau]=\cup'[\tau\gamma],$$
where the union is over all $\gamma$ with length $q$ such that $\tau\gamma$ is admissible. Since $\overline{\omega} \in [\tau]$, there should be $\gamma$ such that $\overline{\omega} \in [\tau\gamma]$. Since $\omega\tau$ is admissible, so is $\omega\tau\gamma(\tau\gamma)^+$. Next we want to show $\omega\tau\gamma(\tau\gamma)^+ \in [\tau\gamma]$. If we separate into two cases where $n\geq k+q$ and $n<k+q$, then in exactly a similar manner as in part (i) we obtain this. It remains only to show $S_{|\omega|}\psi(\omega\tau\gamma(\tau\gamma)^+)\geq -T-Ke^{-(k+q)\alpha}$. We have already $S_{|\omega|}\psi(\overline{\omega})\geq -T$, furthermore if we use lemma \ref{bounded S_n} we see that
$$|S_{|\omega|}\psi(\overline{\omega})-S_{|\omega|}\psi(\omega\tau\gamma(\tau\gamma)^+)|\leq K d(\overline{\omega},\tau\gamma(\tau\gamma)^+)\leq K e^{-(k+q)\alpha},$$
where the last inequality is due to $\overline{\omega} \in [\tau\gamma]$. Thus from this inequality, we obtain 
$$-T-Ke^{-(k+q)\alpha} \leq S_{|\omega|}\psi(\omega\tau\gamma(\tau\gamma)^+).$$ This completes part (iii).\\
(iv) Take $\omega$ that contributes  to $N_{\tau\gamma(\tau\gamma)^+}([\tau\gamma],t, T)$. Clearly, $\omega\tau\tau^+$ is admissible. Since $|\omega|\geq k+q$ then we have clearly $\omega\tau\tau^+ \in [\tau\gamma]$ as well. Further, note that 
$$|S_{|\omega|}(\omega\tau\gamma(\tau\gamma)^+)-S_{|\omega|}(\omega\tau\tau^+)|\leq K.$$
(v) Take $\omega$ such that it contributes to $N_{per}([\tau],T)$. If its length is less than $k+q$, then we use part (i). This contributes to the third sum on the right-hand side. If the length of $\omega$ is at least $k+q$, then (iii) and (iv) tell us $\omega$ contributes to either of the first two sums on the right-hand side. This finishes the proof of (v) and the lemma.
\end{proof}
Below we want to prove the item (v) from the above lemma without $[\tau]$. Let $\rho \in E_A^{\mathbb{N}}$, then due to our assumption that shift space is finitely irreducible, there exists a finite set consisting of finite words
$$\Omega=\{\tau_{(1)},...,\tau_{(r)} \}$$
such that for every finite word $\omega$ there exists $\tau_{(j)} \in \Omega$ with $\omega\tau_{(j)}\rho$ being admissible. Below we have a summation over all $\tau_{(j)}\rho$, while this might not be admissible for all $j=1,...,r$. Note that the sum is only taken over those $j$s where $\tau_{(j)}\rho$ is admissible. Note, that this $\Omega$ and $r$ is independent of $\rho$.
\begin{lemma}\label{bound for periodic whole}
If $F$ is any finite subset of $E^q_A$ and $F'=E^q_A \setminus F$, for any $\rho \in E_A^{\mathbb{N}}$ we have
    $$N_{\text{per}}(T) \leq   \sum\limits_{\substack{ \gamma \in F }} N_{\gamma\gamma^+}([\gamma], T+Ke^{-q\alpha})$$
    $$+ \sum\limits_{\substack{ j=1 \\ \tau_{(j)}\rho \in E_A^{\mathbb{N}}}}^r N_{\tau_{(j)}\rho}([F'], T+K)+\sum\limits_{\substack{ j=1 \\ \tau_{(j)}\rho \in E_A^{\mathbb{N}}}}^r  \sum_{i=1}^{q-1} N_{\tau_{(j)}\rho}(i,T+K),$$
    where $K$ only depends on $f$.
\end{lemma}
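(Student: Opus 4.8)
The plan is to prove the inequality by exhibiting an injection from the (finite) set of periodic words counted by $N_{\text{per}}(T)$ into the disjoint union of the finite sets counted by the individual terms on the right-hand side; once such an injection is produced, the inequality is immediate by comparing cardinalities of fibres term by term. So let $\omega$ be a periodic word with $S_{|\omega|}f(\bar\omega)\ge -T$, and split into three cases according to $|\omega|$ and the first $q$ letters of $\omega$: (1a) $|\omega|\ge q$ and $\gamma:=\omega_1\cdots\omega_q\in F$; (1b) $|\omega|\ge q$ and $\gamma\in F'$; and (2) $i:=|\omega|\le q-1$. Note $\gamma\in E_A^q$ automatically, being a sub-block of the admissible word $\omega$, so these cases genuinely partition the domain.

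In case (1a) I would send $\omega$ to itself, viewed as a word contributing to $N_{\gamma\gamma^+}([\gamma],T+Ke^{-q\alpha})$: since $\omega$ is periodic, $\omega_{|\omega|}\omega_1$ is admissible and $\omega_1=\gamma_1$, so $\omega\gamma\gamma^+$ is admissible, and because $|\omega|\ge q=|\gamma|$ its first $q$ letters are $\gamma$, hence $\omega\gamma\gamma^+\in[\gamma]$; moreover $\bar\omega$ and $\omega\gamma\gamma^+$ share the prefix $\omega$ and their tails $\omega\omega\cdots$ and $\gamma\gamma^+$ agree on their first $q$ letters (again since $|\omega|\ge q$), so Lemma \ref{bounded S_n} gives $|S_{|\omega|}f(\bar\omega)-S_{|\omega|}f(\omega\gamma\gamma^+)|\le Ke^{-q\alpha}$ and thus $S_{|\omega|}f(\omega\gamma\gamma^+)\ge -T-Ke^{-q\alpha}$. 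In case (1b), by finite irreducibility I would fix one $\tau_{(j)}\in\Omega$ with $\omega\tau_{(j)}\rho$ admissible (this also forces $\tau_{(j)}\rho$ admissible, so $j$ really appears in the sum) and send $\omega$ to the pair $(j,\omega)$ with $\omega$ counted by $N_{\tau_{(j)}\rho}([F'],T+K)$: the first $q$ letters of $\omega\tau_{(j)}\rho$ are $\gamma\in F'$, so $\omega\tau_{(j)}\rho\in[\gamma]\subseteq[F']$, and $S_{|\omega|}f(\omega\tau_{(j)}\rho)\ge S_{|\omega|}f(\bar\omega)-K\ge -T-K$ by Lemma \ref{bounded S_n}. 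Case (2) is identical to (1b) but with no cylinder constraint: choose $\tau_{(j)}\in\Omega$ with $\omega\tau_{(j)}\rho$ admissible and send $\omega$ to $(j,\omega)$ counted by $N_{\tau_{(j)}\rho}(i,T+K)$, the bound $S_i f(\omega\tau_{(j)}\rho)\ge -T-K$ again following from Lemma \ref{bounded S_n}.

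It then remains to check that the combined map into $\bigsqcup_{\text{RHS terms}}(\text{set counted by that term})$ is injective. The three cases feed disjoint families of terms (the first sum in (1a), the $[F']$-sums in (1b), the length-$i$ sums in (2)); within (1a) distinct $\gamma\in F$ give disjoint cylinders $[\gamma]$ and hence distinct target terms, and inside each target term the assignment $\omega\mapsto\omega$ is plainly injective; for (1b) and (2) the target component is pinned down by the chosen index $j$ and the word is recorded verbatim, so again no collisions. Counting the fibres of this injection term by term yields precisely the asserted inequality.

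I do not expect a real obstacle: this is a routine refinement of Lemma \ref{bound for periodic}(v), and in fact it is simpler, because without a $[\tau]$ restriction one may take $\gamma$ to be literally the length-$q$ prefix of $\omega$ and never needs the ``write $\tau$ as copies of $\omega$'' device used there. The only points demanding care are (i) invoking Lemma \ref{bounded S_n} so that the error terms come out exactly as $K$, respectively $Ke^{-q\alpha}$, as stated, and (ii) drawing all auxiliary words $\tau_{(j)}$ from the single finite set $\Omega$ attached to $\rho$ at the start of the discussion, so that $r$ and $\Omega$ are genuinely independent of $\omega$ and the only $\rho$-dependence is the one already displayed in the statement.
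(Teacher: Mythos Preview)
Your proposal is correct and follows essentially the same approach as the paper's own proof: the same three-way split according to whether $|\omega|<q$, or $|\omega|\ge q$ with prefix $\gamma$ in $F$ versus $F'$, with the same target terms and the same appeals to Lemma~\ref{bounded S_n} for the Birkhoff-sum bounds. Your write-up is in fact more careful than the paper's, since you make the injectivity of the assignment explicit rather than leaving it implicit in the word ``contributes.''
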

\begin{proof}
The proof is similar to item (v) in the above lemma. Let $\omega$ be a finite word contributing to $N_{\text{per}}(T)$, pick $\tau_{(j)} \in \Omega$ such that $\omega\tau_{(j)}\rho$ is admissible. If $|\omega|<q$, clearly $\omega$ is contributing to the third term on the right hand of the inequality. If $|\omega| \geq q$ and $\omega_1...\omega_q \in F$, we want to show $\omega$ contributes to $N_{\gamma\gamma^+}([\gamma], T+Ke^{-q\alpha})$ where $\gamma=\omega_1...\omega_q$. Since $\omega$ is periodic $\omega\omega_1$ is admissible, and so is $\omega\gamma\gamma^+$. It is clear that $\omega\gamma\gamma^+ \in [\gamma]$ as well. Further, note that 
$$|S_{|\omega|}f(\omega\gamma\gamma^+)-S_{|\omega|}f(\overline{\omega})|\leq Kd(\gamma\gamma^+,\overline{\omega})\leq Ke^{-q\alpha}.$$
Finally, in case $|\omega|\geq q$ and $\omega_1...\omega_q \in F'$ we want to show $\omega$ is contributing to the second sum on the right-hand side. This is similar to our previous case.
\end{proof}
Moreover, we have the following two estimates for the eigenfunction $h$ and the equilibrium state $\mu$.
\begin{lemma}
Let $\omega$ be a word of length $n$ such that $\omega\rho$, $\omega\rho'$ are admissible, then we have
$$1-K_{1}e^{-n\alpha}\leq \frac{h(\omega\rho)}{h(\omega\rho')}\leq 1+K_{1}e^{-n\alpha},$$
where $K_{1}$ only depends on $h$.
\end{lemma}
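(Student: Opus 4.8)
The plan is to exploit two facts about $h$ that are already available: first, $h$ lies in $C^{0,\alpha}(E_A^{\mathbb{N}},\mathbb{C})$ — it is produced in Section 3 as the $\|\cdot\|_{\alpha}$-limit of the Ces\`aro averages $\frac{1}{n_k}\sum_{j=0}^{n_k-1} e^{-P(x)}\mathcal{L}_x^{j}(\mathbb{1})$, so in particular $V_{\alpha}(h)<\infty$ — and second, $h$ is uniformly bounded away from $0$, which is exactly Lemma \ref{epsilon bounded below}, giving a constant $c>0$ (one may take $c=\inf h$) with $h>c$ everywhere.

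Granting these, the argument is immediate. Since $\omega$ is a word of length $n\geq 1$, the two admissible sequences $\omega\rho$ and $\omega\rho'$ agree on their first $n$ coordinates, hence $|\omega\rho\wedge\omega\rho'|\geq n$. By the definition of the H\"older-type coefficient (Definition \ref{HO type}),
$$|h(\omega\rho)-h(\omega\rho')|\,e^{\alpha(n-1)}\leq V_{\alpha,n}(h)\leq V_{\alpha}(h),$$
so that $|h(\omega\rho)-h(\omega\rho')|\leq V_{\alpha}(h)\,e^{\alpha}\,e^{-\alpha n}$. Dividing by $h(\omega\rho')\geq c$ yields
$$\left|\frac{h(\omega\rho)}{h(\omega\rho')}-1\right|\leq \frac{V_{\alpha}(h)\,e^{\alpha}}{c}\,e^{-\alpha n},$$
which is the claimed two-sided estimate with $K_{1}:=V_{\alpha}(h)\,e^{\alpha}/c$. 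This $K_{1}$ is determined by $h$ alone (through $V_{\alpha}(h)$ and $\inf h$, the exponent $\alpha$ being fixed ambient data), as asserted. Rearranging gives $1-K_{1}e^{-n\alpha}\leq h(\omega\rho)/h(\omega\rho')\leq 1+K_{1}e^{-n\alpha}$.

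There is no real obstacle here; the only steps needing a word of justification are that $h\in C^{0,\alpha}$ (so $V_{\alpha}(h)$ is finite) and that $\inf h>0$, both supplied by Section 3. One could instead avoid quoting $h\in C^{0,\alpha}$ and run the bounded-distortion estimate of Lemma \ref{bounded S_n} term by term on the eigenfunction identity $\mathcal{L}_{1}^{n}h=e^{nP(1)}h$; this reproves the H\"older bound from scratch, but the resulting constant then also depends on $f$, so the route via $V_{\alpha}(h)$ is both shorter and gives the sharper ``depends only on $h$'' conclusion.
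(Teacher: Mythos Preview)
Your proof is correct and follows essentially the same approach as the paper: use the H\"older continuity of $h$ to bound $|h(\omega\rho)-h(\omega\rho')|$ by a constant times $e^{-n\alpha}$, then divide by the positive lower bound on $h$ from Lemma~\ref{epsilon bounded below}. The only cosmetic difference is that the paper phrases the H\"older bound via the coefficient $|h|_{\alpha}$ (writing $K_0\,d(\omega\rho,\omega\rho')\le K_0 e^{-n\alpha}$) while you use $V_{\alpha}(h)$; by Proposition~\ref{norm equivalence} these are equivalent.
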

\begin{proof}
We know from \citep[~ p. 34]{MU} that $h$ is H\"{o}lder continuous, therefore there is a constant $K_0$, such that
$$|h(\omega\rho)-h(\omega\rho)|\leq K_{0}d(\omega\rho,\omega\rho')\leq K_{0}e^{-n\alpha}.$$
Dividing by $h(\omega\rho)$ and using lemma \ref{epsilon bounded below}, we obtain
$$|\frac{h(\omega\rho)}{h(\omega\rho')}-1|\leq \frac{K_0}{h(\omega\rho)}e^{-n\alpha}\leq K_{1}e^{-n\alpha},$$
where $K_{1}=K_0\frac{M+1}{R}$.
\end{proof}
\begin{lemma}\label{equivalent measure}
Let $\omega$ be a finite word of length $n$ such that $\omega\rho$ is admissible, then
$$(1-K_{1}e^{-n\alpha})h(\omega\rho)m([\omega])\leq \mu([\omega])\leq (1+K_{1}e^{-n\alpha})h(\omega\rho)m([\omega]),$$
where $K_1$ is a constant depending only on $h$.
\end{lemma}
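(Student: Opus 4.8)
The plan is to reduce everything to the density identity $d\mu = h\,dm$, i.e.\ to the fact that the invariant ergodic Gibbs state $\mu$ is absolutely continuous with respect to the eigenmeasure $m$ with Radon--Nikodym derivative the normalized eigenfunction $h$ of $\mathcal{L}_f$ (recall $\int h\,dm = 1$). This is precisely what was extracted in the proof of Lemma~\ref{constant modulus}: taking there $g = \mathbb{1}$ one gets $g_1 = d = \int h\,dm = 1$ and hence $\mu(A) = \int_A h\,dm$ for every Borel set $A$; it is also Theorem~2.4.7 in \citep{MU}. Applying this to $A = [\omega]$ gives
$$\mu([\omega]) = \int_{[\omega]} h\,dm.$$

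Next I would control the integrand pointwise on the cylinder. Every $\xi \in [\omega]$ is of the form $\xi = \omega\xi'$ for some $\xi' \in E_A^{\mathbb{N}}$ with $\omega\xi'$ admissible. The previous lemma, which bounds the ratio $h(\omega\rho)/h(\omega\rho')$ for any two admissible tails and is therefore symmetric in the two tails, applied to the pair $(\omega\xi',\,\omega\rho)$ yields
$$(1 - K_1 e^{-n\alpha})\,h(\omega\rho) \le h(\xi) \le (1 + K_1 e^{-n\alpha})\,h(\omega\rho),$$
uniformly over $\xi \in [\omega]$, with the same constant $K_1 = K_0(M+1)/R$ depending only on $h$.

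Finally I would integrate this chain of inequalities over $[\omega]$ against the positive measure $m$, using $\int_{[\omega]} dm = m([\omega])$ and the fact that $h(\omega\rho)$ is a constant (the tail $\rho$ is fixed). This gives
$$(1 - K_1 e^{-n\alpha})\,h(\omega\rho)\,m([\omega]) \le \mu([\omega]) \le (1 + K_1 e^{-n\alpha})\,h(\omega\rho)\,m([\omega]),$$
which is the assertion. There is no genuine obstacle: the only point requiring care is the correct invocation of $d\mu = h\,dm$, and the rest is the pointwise Hölder comparison of $h$ across a length-$n$ cylinder, whose oscillation is exactly of order $e^{-n\alpha}$ — matching the error term in the statement.
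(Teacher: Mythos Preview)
Your proof is correct and follows essentially the same route as the paper: both use the identity $\mu([\omega])=\int_{[\omega]} h\,dm$ coming from Lemma~\ref{constant modulus}, bound $h$ on the cylinder via the previous lemma by $(1\pm K_1 e^{-n\alpha})h(\omega\rho)$, and integrate against $m$. The paper phrases the middle step as $\inf_{[\omega]} h$ and $\sup_{[\omega]} h$ bounds, whereas you write it as a pointwise inequality for $h(\xi)$ with $\xi\in[\omega]$, but these are the same argument.
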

\begin{proof}
We saw in the proof of the lemma \ref{constant modulus} that $\mu(A)=\int_A h \text{d}m$. Therefore we have
$$\left(\inf_{[\omega]} h\right) m([\omega])\leq \mu([\omega])\leq \left(\sup_{[\omega]} h\right) m([\omega]).$$
Now we use the above lemma to see
$$(1-K_{1}e^{-n\alpha})h(\omega\rho)\leq \inf_{[\omega]}  h \leq \sup_{[\omega]} h \leq (1+K_{1}e^{-n\alpha})h(\omega\rho).$$
This finishes the proof.
\end{proof}
\begin{proposition}\label{eta holomorphic}
The functions $\eta_{\rho}([H],s)$, $\eta_{\rho}(H,s)$ are holomorphic on $\operatorname{Re}(s) >1$, and the function $\eta_{\rho}([H],q,s)$ is holomorphic on $\Gamma^+$.
\end{proposition}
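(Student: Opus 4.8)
The plan is to treat the three functions in order of increasing difficulty, in each case exploiting that they are assembled from the holomorphic operator family $s\mapsto\mathcal{L}_s^n$ established in the first lemma of this section, and that $\mathbbm{1}_{[H]}\in C^{0,\alpha}(E_A^{\mathbb{N}},\mathbb{C})$ by the lemma on indicator functions of finite unions of bounded-length cylinders.

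First I would dispose of $\eta_{\rho}([H],q,s)=\mathcal{L}_s^q(\mathbbm{1}_{[H]})(\rho)$. The map $s\mapsto\mathcal{L}_s^q$ is a holomorphic $\mathfrak{B}$-valued function on $\Gamma^+$, and the evaluation functional $\mathcal{T}\mapsto\mathcal{T}(\mathbbm{1}_{[H]})(\rho)$ is bounded from $\mathfrak{B}$ to $\mathbb{C}$, since $|\mathcal{T}(g)(\rho)|\le\|\mathcal{T}(g)\|_{\infty}\le\|\mathcal{T}(g)\|_{\alpha}\le\|\mathcal{T}\|_{\alpha}\|g\|_{\alpha}$. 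A holomorphic operator-valued map composed with a bounded linear functional is holomorphic, so $\eta_{\rho}([H],q,s)$ is holomorphic on $\Gamma^+$.

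Next, $\eta_{\rho}([H],s)=\sum_{n=1}^{\infty}\mathcal{L}_s^n(\mathbbm{1}_{[H]})(\rho)$ has every summand holomorphic on $\Gamma^+\supseteq\{\operatorname{Re}(s)>1\}$ by the argument just given, so by Weierstrass' theorem on locally uniform limits of holomorphic functions it suffices to show the series converges uniformly on compact subsets of $\{\operatorname{Re}(s)>1\}$. Writing $x=\operatorname{Re}(s)$ and using $|\exp(sS_nf(\omega\rho))|=\exp(xS_nf(\omega\rho))$ together with the Gibbs estimate (\ref{Gibbs}) for $xf$ and the disjointness of the cylinders $\{[\omega]:\omega\in E_A^n\}$,
$$|\mathcal{L}_s^n(\mathbbm{1}_{[H]})(\rho)|\le\mathcal{L}_x^n(\mathbbm{1})(\rho)=\sum_{\substack{\omega\in E_A^n\\ \omega\rho\ \mathrm{adm.}}}\exp(xS_nf(\omega\rho))\le Q_x e^{nP(x)}\sum_{\omega\in E_A^n}m_x([\omega])\le Q_x e^{nP(x)}.$$
Since $P(1)=0$, Proposition \ref{pressure decreasing} applies, so $P$ is strictly decreasing on $\Gamma$ and $P(x)<P(1)=0$ for every $x>1$. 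On a compact $K\subset\{\operatorname{Re}(s)>1\}$ one has $\operatorname{Re}(s)\in[a,b]$ with $a>1$, hence $P(\operatorname{Re}(s))\le P(a)=:-c<0$, while the Gibbs constant $Q_x$ stays bounded by some $Q_K$ on $[a,b]$; thus the series is dominated by $Q_K\sum_n e^{-cn}<\infty$. (Alternatively one may invoke the Lasota--Yorke bound (\ref{lasota}) to get $\|\mathcal{L}_s^n(\mathbbm{1}_{[H]})\|_{\alpha}\le (Q+C)e^{nP(x)}\|\mathbbm{1}_{[H]}\|_{\alpha}$ and conclude the same way.) Therefore $\eta_{\rho}([H],s)$ is holomorphic on $\{\operatorname{Re}(s)>1\}$. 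Finally, for $\eta_{\rho}(H,s)=\sum_{n=1}^{\infty}\mathcal{L}_s^{k+n}(\mathbbm{1}_{[H]})(\rho)$ one can either repeat the estimate verbatim (it is just the tail $\sum_{n>k}$ of the same geometrically decaying series) or, more economically, use (\ref{eta relation}) to write $\eta_{\rho}(H,s)=\eta_{\rho}([H],s)-\sum_{n=1}^{k}\mathcal{L}_s^n(\mathbbm{1}_{[H]})(\rho)$, a finite sum of functions already shown holomorphic on $\{\operatorname{Re}(s)>1\}$.

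The only real obstacle is the local uniformity in $s$ of the above bound, i.e.\ knowing that the Gibbs (equivalently Lasota--Yorke) constants attached to the potentials $xf$ remain bounded as $x$ ranges over a compact subinterval of $\Gamma$; this is routine, following from the standard construction of these constants out of $V_{\alpha}(f)$ and the continuity of the pressure function $P$. Everything else is bookkeeping: identifying the Poincaré series with the operator sums (\ref{eta}), (\ref{eta relation}) and the single-power formula, checking $\mathbbm{1}_{[H]}\in C^{0,\alpha}$, and applying Weierstrass.
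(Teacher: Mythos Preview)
Your proof is correct and follows essentially the same strategy as the paper: reduce $\eta_{\rho}(H,s)$ to $\eta_{\rho}([H],s)$ via (\ref{eta relation}), observe each term $\mathcal{L}_s^n(\mathbbm{1}_{[H]})(\rho)$ is holomorphic, and obtain a geometric tail bound from $P(x)<0$ for $x>1$ to apply Weierstrass. The only variation is that the paper bounds $|\mathcal{L}_s^n(\mathbbm{1}_{[H]})(\rho)|$ directly by $\sum_{\omega\in E_A^n}\exp(x\sup_{[\omega]}S_nf)$ and invokes the defining limit for the pressure to get this $<e^{rn}$ eventually, which sidesteps the uniformity-of-$Q_x$ issue you (correctly) flag in your Gibbs-based estimate.
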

\begin{proof}
Using the relation \ref{eta relation}, if we show $\eta_{\rho}([H],s)$ is holomorphic then $\eta_{\rho}(H,s)$ will be holomorphic as well. In order to show $\eta_{\rho}([H],s)$ is holomorphic we need $| \mathcal{L}_s ^n (\mathbb{1}_{[H]})|_{\infty}$:
$$ |\mathcal{L}_s ^n (\mathbb{1}_{[H]})|_{\infty}\leq |\mathcal{L}_s ^n (\mathbb{1})|_{\infty} \leq \sum\limits_{\substack{ \omega \in E_A^n }}\exp(\operatorname{Re}(s) \sup _{[\omega]}S_nf).$$
This reminds us of the pressure function. Using the fact that $P$ is strictly decreasing on $\Gamma$ from proposition \ref{pressure decreasing}, consider an arbitrary $s_0=x_0+iy_0$ with $x_0>1$, for any $s$ with $x\geq x_0$ there is a negative $r$ such that $P(x)<r<0$, therefore there is $N$ such that for $n>N$:
$$\frac{1}{n}\ln \big( \sum\limits_{\omega \in E_A^n}\exp(x \sup_{[\omega]}S_nf \big) < r,$$
so
$$ |\mathcal{L}_s ^n (\mathbb{1}_{[H]})|_{\infty}\leq |\mathcal{L}_s ^n (\mathbb{1})|_{\infty}\leq \sum\limits_{\omega \in E_A^n}\exp(x \sup _{[\omega]}S_nf) < e^{rn}.$$
This shows $\eta_{\rho}([H],s)$ converges uniformly on compact sets, thus $\eta_{\rho}([H],s)$ as a sum of holomorphic functions is holomorphic on $\operatorname{Re}(s)>1$.\\
The above expression of $\eta_{\rho}([H],q,s)$ shows it is holomorphic on $\Gamma^+.$
\end{proof}
\begin{proposition}\label{residue}
If $f: E_A^{\mathbb{N}} \rightarrow \mathbb{R}$ has D-generic property, then each $\eta_{\rho}([H],s)$ and  $\eta_{\rho}(H,s)$ at each  point of the critical line $\operatorname{Re}(s) =1$ except $s=1$ admits analytic continuation and at $s=1$ admits a meromorphic extension with a simple pole and residue
$$\text{Res}(\eta_{\rho}, 1)=\frac{h(\rho)}{\chi_{\mu}}m([H]).$$
If we lift the D-generic property, then there exists $y_1>0$ such that the above statement holds on the segment $\{1+iy : |y|<y_1\}$ with the same residue at the simple pole $s=1$. Furthermore, this $y_1$ doesn't depend on $H$ or $\rho$.
\end{proposition}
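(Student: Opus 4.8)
The plan is to feed the spectral decomposition \ref{spectral decomposition sum} of $\mathcal{L}_s$ into the geometric series representation \ref{eta} of $\eta_{\rho}([H],s)$. Fix a point $s_0$ on the critical line $\operatorname{Re}(s)=1$. By Proposition \ref{spectral radius L} the essential spectral radius of $\mathcal{L}_{s_0}$ is strictly below $e^{P(1)}=1$, and the eigenvalues of modulus $1$ are finitely many and simple (preceding section); hence Theorem \ref{kato theorem} yields, on a neighbourhood $U$ of $s_0$, holomorphic functions $\lambda_i(s)$, holomorphic rank one projections $\mathcal{P}_{i,s}$ and a holomorphic remainder with $\mathcal{L}_s^{n}=\sum_i\lambda_i(s)^n\mathcal{P}_{i,s}+\mathcal{D}_s^{n}$, while Lemma \ref{spectral D^m} gives $\|\mathcal{D}_s^{m}\|_{\alpha}\le C\beta^m$ with $0<\beta<1$ after possibly shrinking $U$. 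Since $\mathbb{1}_{[H]}\in C^{0,\alpha}$, and for $\operatorname{Re}(s)>1$ Proposition \ref{pressure decreasing} forces $|\lambda_i(s)|\le r(\mathcal{L}_s)\le e^{P(\operatorname{Re}(s))}<1$, I may insert the decomposition into \ref{eta} and sum the resulting geometric series to get, for $s\in U$ with $\operatorname{Re}(s)>1$,
\[
\eta_{\rho}([H],s)=\sum_{i}\frac{\lambda_i(s)}{1-\lambda_i(s)}\,\mathcal{P}_{i,s}(\mathbb{1}_{[H]})(\rho)+\Phi(s),\qquad \Phi(s):=\sum_{n=1}^{\infty}\mathcal{D}_s^{n}(\mathbb{1}_{[H]})(\rho),
\]
where $\Phi$ is holomorphic on $U$ by the geometric bound on $\|\mathcal{D}_s^{m}\|_{\alpha}$. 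The right hand side is meromorphic on $U$ with poles only at points where some $\lambda_i(s)=1$, so it furnishes the meromorphic continuation of $\eta_{\rho}([H],s)$ across $s_0$; the statement for $\eta_{\rho}(H,s)$ then follows from \ref{eta relation}, the correction term there being a finite sum of functions holomorphic on $\Gamma^+$.

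It remains to locate the poles. If $s_0=1+\mathrm{i}y_0$ with $y_0\neq 0$ and $f$ is D-generic, Definition \ref{d-generic} says $1=e^{P(1)}$ is not an eigenvalue of $\mathcal{L}_{s_0}$, hence $\lambda_i(s_0)\neq 1$ for all $i$; shrinking $U$ keeps $\lambda_i(s)\neq 1$ on $U$, so $\eta_{\rho}([H],\cdot)$ extends holomorphically. At $s_0=1$ the only eigenvalue of modulus $1$ is the simple top eigenvalue $1$ of $\mathcal{L}_1$, with eigenfunction $h=h_1$ and eigenmeasure $m=m_1$, so only the single term $\frac{\lambda_1(s)}{1-\lambda_1(s)}\mathcal{P}_{1,s}(\mathbb{1}_{[H]})(\rho)$ survives. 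For real $s$ near $1$ the top eigenvalue of $\mathcal{L}_s=\mathcal{L}_{sf}$ is $e^{P(s)}$ (Theorem \ref{R. D. Mauldin and M. Urbanski} together with simplicity), so $\lambda_1(s)=e^{P(s)}$ there; hence $\lambda_1(1)=1$ and $\lambda_1'(1)=P'(1)=-\chi_{\mu}\neq 0$ by the proposition giving $P'(\delta)=-\chi_{\mu_{\delta}}$. Therefore $1-\lambda_1(s)=\chi_{\mu}(s-1)\bigl(1+O(s-1)\bigr)$, so the term has a simple pole at $s=1$ with residue $\chi_{\mu}^{-1}\mathcal{P}_{1,1}(\mathbb{1}_{[H]})(\rho)$; and since $\mathcal{P}_{1,1}$ is the rank one projection $g\mapsto\bigl(\int g\,\mathrm{d}m\bigr)h$, this residue is exactly $\tfrac{h(\rho)}{\chi_{\mu}}m([H])$.

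For the version with the D-generic hypothesis removed, I would only quantify the neighbourhood at $s_0=1$: Lemma \ref{spectral D^m} fixes $U_1\ni 1$ and $\beta<1$ on which $\operatorname{Sp}(\mathcal{L}_s)\setminus\{\lambda_1(s)\}$ lies in the disc of radius $\beta$, and holomorphy of $\lambda_1$ with $\lambda_1'(1)\neq 0$ lets me shrink $U_1$ further so that $\lambda_1(s)=1$ only at $s=1$ inside it. Then any $y_1>0$ with $\{1+\mathrm{i}y:|y|\le y_1\}\subseteq U_1$ works, since on that segment the displayed formula has its sole possible pole at $s=1$, with the residue just computed. Here $U_1$, $\beta$, $\lambda_1$, $\mathcal{P}_{1,\cdot}$ and $\mathcal{D}_{\cdot}$ are attributes of the operator family $\mathcal{L}_s$ alone; only the holomorphic coefficients $\mathcal{P}_{i,s}(\mathbb{1}_{[H]})(\rho)$ and $\Phi(s)$ involve $H$ or $\rho$, so $y_1$ is independent of $H$ and $\rho$.

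The step I expect to cost the most care is the passage from the convergent Poincaré series to the displayed closed form: one must justify interchanging the sum over $n$ with the finite spectral decomposition and check that $\Phi$ converges uniformly on $U$, so that the identity is a genuine analytic continuation and not a merely formal manipulation. Both follow from Lemma \ref{spectral D^m} and the bound $r(\mathcal{L}_s)<1$ for $\operatorname{Re}(s)>1$, but they must be written out; a minor additional point is confirming $\lambda_1(s)=e^{P(s)}$ for real $s$, which is what links $\lambda_1'(1)$ to $\chi_{\mu}$.
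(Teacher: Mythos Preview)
Your plan is correct and follows essentially the same route as the paper: insert the spectral decomposition \ref{spectral decomposition sum} into the series \ref{eta}, sum the geometric pieces, control the remainder via Lemma \ref{spectral D^m}, and read off the pole and residue at $s=1$ using $\lambda_1'(1)=P'(1)=-\chi_\mu$. Two small points you assert that the paper actually proves: (i) the identification $\mathcal{P}_{1,1}(g)=h\int g\,\mathrm{d}m$ is derived there from $\mathcal{L}_1^*m=m$ together with $\int\mathcal{D}_1^k(g)\,\mathrm{d}m\to 0$, and (ii) rather than claiming that $\mathcal{L}_1$ has a \emph{single} peripheral eigenvalue, the paper allows several $\lambda_i(1)$ of modulus $1$ and uses only that they are distinct simple eigenvalues (so $\lambda_i(1)\neq 1$ for $i\neq 1$) together with the identity theorem for $\lambda_1$ to produce $y_1$; your version is cleaner but relies on the stronger Ruelle--Perron--Frobenius fact that for real $s$ the peripheral spectrum is a single point.
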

\begin{proof}
By reviewing equations \ref{eta} and \ref{spectral decomposition sum}, it is clear that we can write
$$\eta_{\rho}([H],s)=\sum_{k=1}^{\infty} \mathcal{L}_s^k ( \mathbb{1}_{[H]})$$
$$=\sum_{k=1}^{\infty}\left(\lambda_1(s)^k\mathcal{P}_{1,s}( \mathbb{1}_{[H]}) + ...+ \lambda_n(s)^k\mathcal{P}_{n,s}( \mathbb{1}_{[H]})+\mathcal{D}_{s}^k( \mathbb{1}_{[H]})\right).$$
Now we use proposition \ref{pressure decreasing} to see $|\lambda_i(s)|=e^{P(x)}<1$ if $x>1$. Therefore we can continue the above equation
$$=\lambda_1(s)(1-\lambda_1(s))^{-1}\mathcal{P}_{1,s}(\mathbb{1}_{[H]})+ ...+ \lambda_n(s)(1-\lambda_n(s))^{-1}\mathcal{P}_{n,s}(\mathbb{1}_{[H]})+\mathcal{Q}_s( \mathbb{1}_{[H]}),$$
where $\mathcal{Q}_s=\sum_{k=1}^{\infty}\mathcal{D}^k$ converges using lemma \ref{spectral D^m}. This is a valid relation for the Poincar\'{e} series $\eta_{\rho}$ on $x>1$. We fix $s_0$ on the line $x=1$, it is clear that $\mathcal{Q}_s(\mathbb{1}_{[H]})$ is a holomorphic function on the neighborhood $U$ of $s_0$ obtained in lemma \ref{spectral D^m}. Additionally all the projections $\mathcal{P}_{i,s}$ and function $\lambda_i(s)$ are analytic as discussed just above the equation \ref{spectral decomposition sum}. Therefore the right-hand side of the above equation is analytic on some neighborhood $U_0$ of $s_0$, as long as $\lambda_i(s_0)\neq1$. As we know for real $s=x+i0$, one of the eigenvalues of the transfer operator is $e^{P(x)}$ by theorem \ref{R. D. Mauldin and M. Urbanski}. We let $\lambda_1(s)$ represent this eigenvalue, it is clear that $\lambda_1(s)$ is not constant on any neighborhood of $s=1$ as $|\lambda_1(s)|=e^{P(x)}$ and $P$ is strictly decreasing by proposition \ref{pressure decreasing}. Since $\lambda_i(s)$ are isolated, simple eigenvalues and further analytic functions identity theorem from complex analysis guarantee the existence of $y_1>0$ for which the equations $\lambda_i(s)=1$ on $\{ 1+i y : |y|<y_1 \}$ have a solution only if $i=1$ and $s=1$. We deduce the righthand side of the equation above defines an analytic function on a neighborhood of $\{ 1+i y : 0<|y|<y_1 \}$. Note that $\lambda_1(s)$ is simple eigenvalue, so near $s=1$ we expect 
$$(1-\lambda_1(s)) \sim s-1 .$$
In other words, we find that $\eta_{\rho}([H],s)-A/(s-1)$ admits analytic extension to the segment $\{ 1+i y : |y|<y_1 \}$, where
$$A=\lim_{s \rightarrow 1} \eta_{\rho}([H],s)(s-1)=\lambda_1(1)\mathcal{P}_{1,1}(\mathbb{1}_{[H]})\lim_{s \rightarrow 1}\frac{s-1}{1-\lambda_1(s)}.$$
It is clear that using the D-generic property $y_1$ can be taken to be $\infty$. Thus, it only remains to compute $A$. It is clear that $\lambda_1=\lambda_1(1)=e^{P(1)}=1$. To compute $\mathcal{P}_{1,1}(\mathbb{1}_{[H]})$ first note that $\mathcal{L}_0\mathcal{P}_{i,1}=\lambda_i\mathcal{P}_{i,1}$ for each $i$, so $$\int \mathcal{P}_{i,1}(g)dm=\int \mathcal{L}_0\mathcal{P}_{i,1}(g)dm=\lambda_i \int \mathcal{P}_{i,1}(g)dm.$$ 
This gives $\int \mathcal{P}_{i,1}(g)=0$ for every $g \in C^{0,\alpha}(E_A^{\mathbb{N}},\mathbb{C})$ and $i\neq 1$. Therefore with respect to the measure $m$ for each $k$:
$$\int g= \int \mathcal{L}_0^k(g)=\int  \mathcal{P}_{1,1}(g)+ \int \lambda_2^k\mathcal{P}_{2,1}(g) + ...+ \int \lambda_n^k\mathcal{P}_{n,1}(g)+\int \mathcal{D}_1^k(g)$$
$$=\int  \mathcal{P}_{1,1}(g)+\int \mathcal{D}_{1}^k(g),$$
now implementing the inequality obtained in lemma \ref{spectral D^m} would yield 
$$\int g=\int  \mathcal{P}_{1,1}(g).$$
This actually determines the action of $\mathcal{P}_{1,1}$ since if $P_{1,1}(g)=k_g h$ then $k_g=\int g$, i.e. 
$$\mathcal{P}_{1,1}(g)=h \int gdm .$$
And lastly 
$$\lim_{s \rightarrow 1}\frac{1-\lambda_1(s)}{s-1}=\lim_{x \rightarrow 1}\frac{1-e^{P(x)}}{x-1}=-P'(1)e^{P(1)}=-\int f d\mu=\chi_{\mu},$$
where the equality to the last follows from proposition 2.6.13 in \citep[p. ~47]{MU}. Thus we find that the residue is $h(\rho)m([H])/\chi_{\mu}$.
\end{proof}

\section{Asymptotic Formula for Counting}\label{5}

In this section, we assume $f$ is strongly regular, summable and H\"{o}lder-type continuous with $P(1)=P(f)=0$. We keep this assumption to the end of a proposition \ref{borel non d-generic} and after that, we consider general functions with $P(\delta)=P(\delta f)=0$ for some $\delta>0$. We want to find an asymptotic formula for the counting functions presented in the previous section. We can provide a formula for some estimate of the lower bound and upper bound of all possible values. As well in this section by $y_0$ we mean 
$$y_0=\frac{y_1}{2\pi},$$ 
where $y_1$ was obtained in proposition \ref{residue}. As mentioned in that proposition, this $y_0$ doesn't depend on $H$ in $\eta_{\rho}([H],T)$. Further, we set
$$c_1:=y_0^{-1}\left(\exp(y_0^{-1})-1\right)^{-1}, \; \; c_2:=y_0^{-1}\left(\exp(y_0^{-1})-1\right)^{-1}\exp( y_0^{-1}).$$
\begin{proposition}\label{H proposition 2}
$$c_1 \dfrac{h(\rho)}{\chi_{\mu}}m([H])\leq \liminf_{T \rightarrow \infty}\dfrac{N_{\rho}(H, T)}{\exp( T)} \leq \limsup_{T \rightarrow \infty}\dfrac{N_{\rho}(H, T)}{\exp( T)} \leq c_2 \dfrac{h(\rho)}{\chi_{\mu}}m([H]),$$
and
$$c_1 \dfrac{h(\rho)}{\chi_{\mu}}m([H]) \leq \liminf_{T \rightarrow \infty}\dfrac{N_{\rho}([H], T)}{\exp( T)} \leq \limsup_{T \rightarrow \infty}\dfrac{N_{\rho}([H], T)}{\exp( T)} \leq c_2 \dfrac{h(\rho)}{\chi_{\mu}}m([H]),$$
and for every positive integer $q$
$$\lim_{T \rightarrow \infty}\dfrac{N_{\rho}([H],q, T)}{\exp( T)}=0.$$
\end{proposition}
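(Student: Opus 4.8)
The plan is to deduce all three statements from the Graham--Vaaler Tauberian theorem \ref{Garahm-Vaaler} applied with exponent $\delta=1$, feeding in the analytic information about the Poincar\'e series already obtained in Propositions \ref{eta holomorphic} and \ref{residue}. Recall from Section 4 that each of $\eta_\rho([H],s)$, $\eta_\rho(H,s)$ and $\eta_\rho([H],q,s)$ is, by construction, the Laplace--Stieltjes transform of the corresponding right-continuous increasing counting function, i.e.\ of the (locally finite, possibly infinite) Borel measure that function generates via \citep[Thm.~1.16]{Folland}; by Proposition \ref{eta holomorphic} all three converge on $\operatorname{Re}(s)>1$, so the convergence hypothesis of Theorem \ref{Garahm-Vaaler} holds with $\delta=1$. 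The one translation to keep track of is the normalization: Theorem \ref{Garahm-Vaaler} is stated with $s=x+2\pi i y$ and asks for a continuous extension across $\{1+2\pi i y:|y|<y_0\}$, and this is exactly the segment $\{1+iy:|y|<y_1\}$ of Proposition \ref{residue} once one puts $y_0=y_1/(2\pi)$, which is the convention fixed at the start of this section.

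For $N_\rho([H],T)$ and $N_\rho(H,T)$, Proposition \ref{residue} tells us that $\eta_\rho([H],s)-A/(s-1)$ and $\eta_\rho(H,s)-A/(s-1)$, with $A:=h(\rho)m([H])/\chi_\mu$, admit analytic — hence continuous — extension across $\{1+iy:|y|<y_1\}$. This $A$ is a genuine positive constant: $h\geq c>0$ by Lemma \ref{epsilon bounded below}, $m([H])>0$ since the Gibbs state $m$ has full support, and $\chi_\mu>0$ because strong regularity forces $P'(1)=-\chi_\mu<0$ (this is already implicit in the residue computation of Proposition \ref{residue}). Hence Theorem \ref{Garahm-Vaaler} applies verbatim with $\delta=1$ and this $A$, and its conclusion is precisely
$$c_1\,\frac{h(\rho)}{\chi_\mu}m([H]) \leq \liminf_{T\to\infty}e^{-T}N_\rho([H],T) \leq \limsup_{T\to\infty}e^{-T}N_\rho([H],T) \leq c_2\,\frac{h(\rho)}{\chi_\mu}m([H]),$$
because $c_1=y_0^{-1}(e^{y_0^{-1}}-1)^{-1}$ and $c_2=c_1 e^{y_0^{-1}}$ are exactly the Graham--Vaaler constants for $\delta=1$; running the same argument with $\eta_\rho(H,s)$ gives the first displayed chain of inequalities.

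For $N_\rho([H],q,T)$ the Poincar\'e series is $\eta_\rho([H],q,s)=\mathcal{L}_s^q(\mathbb{1}_{[H]})(\rho)$, which by Proposition \ref{eta holomorphic} is holomorphic on all of $\Gamma^+$; since $f$ is strongly regular, $1$ lies in the interior of $\Gamma$, so $\Gamma^+$ contains an open neighborhood of the entire line $\operatorname{Re}(s)=1$. Thus $\eta_\rho([H],q,s)$ itself extends continuously across $\{1+2\pi i y:|y|<y_0\}$ with no pole — this is the case $A=0$, which is admissible by the remark following Theorem \ref{Garahm-Vaaler}. The theorem then yields $0\leq\liminf_{T\to\infty}e^{-T}N_\rho([H],q,T)\leq\limsup_{T\to\infty}e^{-T}N_\rho([H],q,T)\leq 0$, i.e.\ $\lim_{T\to\infty}e^{-T}N_\rho([H],q,T)=0$.

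There is no deep obstacle here: all of the hard analysis — the spectral decomposition of $\mathcal{L}_s$ in Section 3, the identification of the leading eigenvalue branch with value $1$ at $s=1$, the continuous extension along the critical segment, and the residue computation — is already in place. The only points requiring care are bookkeeping ones: matching the $2\pi$ in the Graham--Vaaler normalization with the $y_1$ of Proposition \ref{residue} (absorbed into $y_0=y_1/(2\pi)$), verifying that the residue $A$ is strictly positive so that the non-degenerate case of the Tauberian theorem applies for the first two counting functions, and observing that $A=0$ is permitted for the fixed-length counting function.
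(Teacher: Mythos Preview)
Your proposal is correct and follows essentially the same approach as the paper: apply Proposition \ref{residue} together with the Graham--Vaaler theorem for the first two displays, and use the holomorphy on $\Gamma^+$ from Proposition \ref{eta holomorphic} for the fixed-length case. The only minor deviation is that for the last limit the paper invokes Ikehara--Wiener (Theorem \ref{ikehara-wiener delta}) with $A=0$, using the full-line extension available since $1\in\operatorname{int}\Gamma$, whereas you apply Graham--Vaaler with $A=0$ on the finite segment; both yield the same conclusion, and your route in fact uses a weaker hypothesis.
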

\begin{proof}
The first two lines of inequalities follow from proposition \ref{residue} and applying Graham-Vaaler theorem \ref{Garahm-Vaaler}. The last equality follows from proposition \ref{eta holomorphic} and applying Ikehara-Wiener theorem \ref{ikehara-wiener delta}.
\end{proof}
\begin{proposition}\label{non D-generic tau}
$$c_1 \dfrac{1}{\chi_{\mu}}\mu([\tau])\leq \liminf_{T \rightarrow \infty}\dfrac{N_{\text{per}}(\tau, T)}{\exp( T)} \leq \limsup_{T \rightarrow \infty}\dfrac{N_{\text{per}}(\tau, T)}{\exp( T)}\leq c_2 \dfrac{1}{\chi_{\mu}}\mu([\tau]),$$
and 
$$c_1 \dfrac{1}{\chi_{\mu}}\mu([\tau])\leq \liminf_{T \rightarrow \infty}\dfrac{N_{\text{per}}([\tau], T)}{\exp( T)} \leq \limsup_{T \rightarrow \infty}\dfrac{N_{\text{per}}([\tau], T)}{\exp( T)} \leq c_2 \dfrac{1}{\chi_{\mu}}\mu([\tau]).$$
\end{proposition}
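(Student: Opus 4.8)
The plan is to sandwich each of the two periodic counting functions between countable sums of the ordinary counting functions $N_\rho(\cdot,T)$ handled in Proposition \ref{H proposition 2}, using the comparison inequalities of Lemma \ref{bound for periodic}, then pass the asymptotics of Proposition \ref{H proposition 2} through those sums term by term, trade the eigenmeasure $m$ for the equilibrium state $\mu$ via Lemma \ref{equivalent measure}, reassemble using the partition $[\tau]=\bigsqcup_{\gamma}[\tau\gamma]$ of $[\tau]$ into cylinders of length $|\tau|+q$, and finally send the auxiliary length $q\to\infty$ so the error factors collapse to $1$. What makes this legitimate is the last sentence of Proposition \ref{residue}: the window $y_1$, hence $y_0$ and the constants $c_1,c_2$, is independent of $H$ and $\rho$, so the \emph{same} $c_1,c_2$ govern every summand.

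First the lower bound for $N_{\text{per}}(\tau,T)$. Fix $q$ and start from Lemma \ref{bound for periodic}(ii). Dividing by $e^T$ and applying Lemma \ref{liminf} (the $\liminf$ of a sum of nonnegative functions dominates the sum of the $\liminf$'s) moves the $\liminf$ inside the sum; each inner term, after absorbing the shift $T\mapsto T-Ke^{-(k+q)\alpha}$ into a factor $e^{-Ke^{-(k+q)\alpha}}$, is bounded below by Proposition \ref{H proposition 2} (applied with the single-word collection $H=\{\tau\gamma\}$) by a constant times $h(\tau\gamma(\tau\gamma)^+)m([\tau\gamma])$. Lemma \ref{equivalent measure} converts this into $(1+K_1e^{-(k+q)\alpha})^{-1}\mu([\tau\gamma])$, and countable additivity of $\mu$ gives $\sum_\gamma\mu([\tau\gamma])=\mu([\tau])$. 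Letting $q\to\infty$ kills the two error factors and yields $\liminf_T N_{\text{per}}(\tau,T)/e^T\ge c_1\mu([\tau])/\chi_\mu$.

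Next the upper bound for $N_{\text{per}}([\tau],T)$. Fix $q$ and a finite $F\subseteq E_A^q$, apply Lemma \ref{bound for periodic}(v), divide by $e^T$ and take $\limsup$, distributing it over the three summands. The first (finite) sum: $\limsup$ passes inside, each term is controlled by Proposition \ref{H proposition 2} and then by Lemma \ref{equivalent measure}, giving in total at most $e^{Ke^{-(k+q)\alpha}}(1-K_1e^{-(k+q)\alpha})^{-1}c_2\mu([\tau])/\chi_\mu$. The third (finite) sum is $o(e^T)$ by the last assertion of Proposition \ref{H proposition 2}, so it drops. The middle sum is infinite, and the point is that $N_\rho(\cdot,T)$ is additive over disjoint cylinder families, so $\sum_{\gamma\in F'}N_{\tau\tau^+}([\tau\gamma],T+2K)=N_{\tau\tau^+}([H_{F'}],T+2K)$ with $[H_{F'}]=\bigsqcup_{\gamma\in F'}[\tau\gamma]$; this is again a bounded-length disjoint union of cylinders, so Proposition \ref{H proposition 2} applies and bounds its $\limsup$ by $e^{2K}c_2h(\tau\tau^+)m([H_{F'}])/\chi_\mu$, which tends to $0$ as $F$ exhausts the admissible $\gamma$'s because $\sum_\gamma m([\tau\gamma])=m([\tau])<\infty$ (this is where summability of $f$ enters). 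Letting first $F\uparrow$ and then $q\to\infty$ gives $\limsup_T N_{\text{per}}([\tau],T)/e^T\le c_2\mu([\tau])/\chi_\mu$.

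The remaining two inequalities come for free: the map $\omega\mapsto\tau\omega$ injects the words counted by $N_{\text{per}}(\tau,T)$ into those counted by $N_{\text{per}}([\tau],T)$, so $N_{\text{per}}(\tau,T)\le N_{\text{per}}([\tau],T)$; in fact the difference counts only periodic words of length at most $|\tau|$ whose periodic extension lies in $[\tau]$, hence is eventually a constant and disappears after division by $e^T$, so the two functions have identical asymptotics and all four displayed inequalities hold. I expect the main obstacle to be the handling of the infinite tail sum over $F'$ in the upper bound: one must recognize it as a single $N_{\tau\tau^+}([H_{F'}],\cdot)$, verify that Proposition \ref{H proposition 2} still applies (uniformly bounded word length, mutual disjointness of the cylinders), and check that it is precisely the summability of $f$ that forces $m([H_{F'}])\to0$ and makes this contribution vanish after the double limit $F\uparrow$, $q\to\infty$; the rest is routine tracking of the factors $e^{\pm Ke^{-(k+q)\alpha}}$ and $(1\pm K_1e^{-(k+q)\alpha})^{\pm1}$, all of which tend to $1$.
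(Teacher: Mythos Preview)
Your proposal is correct and follows essentially the same route as the paper: the lower bound for $N_{\text{per}}(\tau,T)$ via Lemma~\ref{bound for periodic}(ii), the upper bound for $N_{\text{per}}([\tau],T)$ via Lemma~\ref{bound for periodic}(v) with the $F'$-tail repackaged as a single $N_{\tau\tau^+}([\tau F'],\cdot)$, the conversion $h\cdot m\leftrightarrow\mu$ through Lemma~\ref{equivalent measure}, and then $q\to\infty$. The paper handles the tail with a fixed $\epsilon$ and a corresponding choice of $F$ rather than your explicit double limit $F\uparrow$, $q\to\infty$, and it leaves the sandwich $N_{\text{per}}(\tau,T)\le N_{\text{per}}([\tau],T)$ implicit where you spell out both the injection $\omega\mapsto\tau\omega$ and the boundedness of the difference; these are cosmetic differences only.
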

\begin{proof}
Let $\sum'$ represent the sum over all $\gamma$ with length $q$ such that $\tau\gamma$ is admissible. Then using part (ii) of lemma \ref{bound for periodic}, lemma \ref{liminf} and proposition \ref{H proposition 2} we can write:
\begin{align*}
\liminf_{T \rightarrow \infty} \dfrac{N_{\text{per}}(\tau, T)}{\exp( T)} & \geq \liminf_{T \rightarrow \infty} \sum ' \dfrac{N_{\tau\gamma(\tau\gamma)^+}(\tau\gamma, T-Ke^{-(k+q)\alpha})}{\exp( T)}\\
&\geq \exp\left(- Ke^{-(k+q)\alpha} \right)  \sum' \liminf_{T \rightarrow \infty}\dfrac{N_{\tau\gamma(\tau\gamma)^+}(\tau\gamma, T-Ke^{-(k+q)\alpha})}{\exp\left( T-Ke^{-(k+q)\alpha} \right)}\\
&=\exp\left(- Ke^{-(k+q)\alpha} \right)  \sum ' c_1 \dfrac{h(\tau\gamma(\tau\gamma)^+)}{\chi_{\mu}}m([\tau\gamma]).
\end{align*}
We use  lemma \ref{equivalent measure} at this step and continue:
\begin{align*}
\liminf_{T \rightarrow \infty} \dfrac{N_{\text{per}}(\tau, T)}{\exp( T)} & \geq c_1\frac{\exp\left(- Ke^{-(k+q)\alpha} \right)}{ \chi_{\mu}}  \sum' (1+K_1e^{-(k+q)\alpha})^{-1}\mu([\tau\gamma])\\
&=c_1 \frac{\exp\left(- Ke^{-(k+q)\alpha} \right)}{1+K_1e^{-(k+q)\alpha}}\frac{1}{ \chi_{\mu}}\mu([\tau]).
\end{align*}
Since $q$ is arbitrary, by $q \to \infty$ we obtain
$$\liminf_{T \rightarrow \infty} \dfrac{N_{\text{per}}(\tau, T)}{\exp( T)}\geq c_1\frac{1}{ \chi_{\mu}}\mu([\tau])$$
If we show
$$\limsup_{T \rightarrow \infty} \dfrac{N_{\text{per}}([\tau], T)}{\exp( T)}\leq c_2 \frac{1}{ \chi_{\mu}}\mu([\tau]),$$
we are done with the proof. We use lemma \ref{bound for periodic} part (v) for this and then we apply proposition \ref{H proposition 2} several times.
$$\limsup_{T \to \infty} \dfrac{N_{\text{per}}([\tau],T)}{\exp(T)}\leq \limsup_{T \to \infty} \sum\limits_{\substack{ \gamma \in F \\ \tau\gamma \in E_A^{\mathbb{N}}}} \dfrac{N_{\tau\gamma(\tau\gamma)^+}\left([\tau\gamma], T+Ke^{-(k+q)\alpha}\right)}{\exp(T)}$$
$$+\limsup_{T \to \infty}\sum\limits_{\substack{ \gamma \in F' \\ \tau\gamma E_A^{\mathbb{N}}}} \dfrac{N_{\tau\tau^+}([\tau\gamma], T+2K)}{\exp(T)}+\limsup_{T \to \infty} \sum_{i=1}^{k+q-1} \dfrac{N_{\tau\tau^+}([\tau],i,T+K)}{\exp(T)}.$$
Now the first limsup easily passes through the finite sum and we use proposition \ref{H proposition 2} with $H=\tau\gamma$, for the second limsup note that 
$$\sum\limits_{\substack{ \gamma \in F' \\ \tau\gamma E_A^{\mathbb{N}}}} N_{\tau\tau^+}([\tau\gamma], T+2K)=N_{\tau\tau^+}([\tau F'],T+2K),$$
therefore we apply proposition \ref{H proposition 2} with $H=\tau F'$ and the last limsup is clearly 0 using again proposition \ref{H proposition 2}. Thus we get
$$\limsup_{T \to \infty} \dfrac{N_{\text{per}}([\tau],T)}{\exp(T)}$$
$$\leq \sum\limits_{\substack{ \gamma \in F \\ \tau\gamma E_A^{\mathbb{N}}}} \limsup_{T \to \infty}\dfrac{N_{\tau\gamma(\tau\gamma)^+}\left([\tau\gamma], T+Ke^{-(k+q)\alpha}\right)}{\exp\left(T+Ke^{-(k+q)\alpha}\right)}\exp\left(Ke^{-(k+q)\alpha}\right)$$
$$+\limsup_{T \to \infty} \dfrac{N_{\tau\tau^+}([\tau F'],T+2K)}{\exp(T+2K)}\exp(2K)$$
$$= \exp\left(Ke^{-(k+q)\alpha}\right) \sum\limits_{\substack{ \gamma \in F \\ \tau\gamma E_A^{\mathbb{N}}}} c_2\dfrac{h(\tau\gamma(\tau\gamma)^+)}{\chi_{\mu}}m([\tau\gamma])+c_2\dfrac{h(\tau\tau^+)}{\chi_{\mu}}m([\tau F'])\exp(2K).$$
Notice that since $F$ was arbitrary for $\epsilon>0$ we choose $F$ such that $$c_2\dfrac{h(\tau\tau^+)}{\chi_{\mu}}m([\tau F'])\exp(2K) < \epsilon,$$
then we obtain
$$\limsup_{T \to \infty} \dfrac{N_{\text{per}}([\tau],T)}{\exp(T)}\leq \exp\left(Ke^{-(k+q)\alpha}\right) \sum\limits_{\substack{ \gamma \in F \\ \tau\gamma E_A^{\mathbb{N}}}} c_2\dfrac{h(\tau\gamma(\tau\gamma)^+)}{\chi_{\mu}}m([\tau\gamma])+\epsilon.$$
Now we apply  left-hand side of the lemma \ref{equivalent measure}:
$$\limsup_{T \to \infty} \dfrac{N_{\text{per}}([\tau],T)}{\exp(T)}\leq \dfrac{\exp\left(Ke^{-(k+q)\alpha}\right)}{1-K_1e^{-(k+q)\alpha}} \sum\limits_{\substack{ \gamma \in F \\ \tau\gamma E_A^{\mathbb{N}}}}c_2\dfrac{1}{\chi_{\mu}}\mu([\tau\gamma])+\epsilon.$$ 
Eventually we let $q \to \infty$ to get
$$\limsup_{T \to \infty} \dfrac{N_{\text{per}}([\tau],T)}{\exp(T)} \leq \sum\limits_{\substack{ \gamma \in F \\ \tau\gamma E_A^{\mathbb{N}}}}c_2\dfrac{1}{\chi_{\mu}}\mu([\tau\gamma]) +\epsilon = c_2 \dfrac{1}{\chi_{\mu}}\mu([\tau F])+\epsilon \leq c_2 \dfrac{1}{\chi_{\mu}}\mu([\tau]) +\epsilon.$$
Since $\epsilon$ was arbitrary we have
$$\limsup_{T \to \infty} \dfrac{N_{\text{per}}([\tau],T)}{\exp(T)} \leq c_2 \dfrac{1}{\chi_{\mu}}\mu([\tau]).$$
\end{proof}
\begin{proposition}\label{sup periodiccc}
$$\limsup_{T \rightarrow \infty}\dfrac{N_{\text{per}}(T)}{\exp( T)}\leq c_2\dfrac{1}{\chi_{\mu}}.$$
\end{proposition}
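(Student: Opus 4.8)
The plan is to mimic the argument used for $N_{\text{per}}([\tau],T)$ in Proposition \ref{non D-generic tau}, but starting from the global estimate of Lemma \ref{bound for periodic whole} instead of Lemma \ref{bound for periodic}(v). Fix any $\rho\in E_A^{\mathbb{N}}$, a positive integer $q$, and $\varepsilon>0$. Let $\Omega=\{\tau_{(1)},\dots,\tau_{(r)}\}$ be the finite set of words provided by finite irreducibility (relative to $\rho$). Since $m$ is a probability measure and $\{[\gamma]:\gamma\in E_A^q\}$ is a countable partition of $E_A^{\mathbb{N}}$, the tail $m([F'])$ can be made arbitrarily small by taking $F\subseteq E_A^q$ finite and large; choose $F$ so that $e^{K}\sum_{j=1}^{r}c_2\,h(\tau_{(j)}\rho)\,m([F'])/\chi_{\mu}<\varepsilon$ (the sum over $j$ is a fixed finite quantity, so no boundedness of $h$ is needed).

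Next I would divide the inequality of Lemma \ref{bound for periodic whole} by $\exp(T)$ and take $\limsup_{T\to\infty}$. Each sum is finite, so the $\limsup$ distributes, and I apply Proposition \ref{H proposition 2} term by term: the first sum contributes $e^{Ke^{-q\alpha}}\sum_{\gamma\in F}c_2\,h(\gamma\gamma^{+})m([\gamma])/\chi_{\mu}$; the second sum contributes $e^{K}\sum_{j}c_2\,h(\tau_{(j)}\rho)m([F'])/\chi_{\mu}<\varepsilon$ by the choice of $F$ (here I use $\sum_{\gamma\in F'}N_{\tau_{(j)}\rho}([\tau_{(j)}\gamma],\cdot)=N_{\tau_{(j)}\rho}([F'],\cdot)$ exactly as in Proposition \ref{non D-generic tau}); and the third, finite double sum $\sum_{j}\sum_{i=1}^{q-1}N_{\tau_{(j)}\rho}(i,T+K)$ has $\limsup$ equal to $0$ by the last equality of Proposition \ref{H proposition 2}. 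This gives
$$
\limsup_{T\to\infty}\frac{N_{\text{per}}(T)}{\exp(T)}\;\le\;e^{Ke^{-q\alpha}}\sum_{\gamma\in F}c_2\,\frac{h(\gamma\gamma^{+})}{\chi_{\mu}}\,m([\gamma])\;+\;\varepsilon.
$$

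Then I would feed the right-hand side through Lemma \ref{equivalent measure}, which bounds $h(\gamma\gamma^{+})m([\gamma])$ by $(1-K_1e^{-q\alpha})^{-1}\mu([\gamma])$, and use $\sum_{\gamma\in F}\mu([\gamma])\le\sum_{\gamma\in E_A^q}\mu([\gamma])=1$ to obtain
$$
\limsup_{T\to\infty}\frac{N_{\text{per}}(T)}{\exp(T)}\;\le\;\frac{e^{Ke^{-q\alpha}}}{1-K_1e^{-q\alpha}}\cdot\frac{c_2}{\chi_{\mu}}\;+\;\varepsilon.
$$
The left-hand side is a fixed number independent of $q$ and $\varepsilon$, while for every $q$ one can find an $F$ making the displayed inequality hold; letting $q\to\infty$ sends the prefactor $e^{Ke^{-q\alpha}}/(1-K_1e^{-q\alpha})$ to $1$, and then letting $\varepsilon\to0$ yields the claim.

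\textbf{Main obstacle.} The routine parts (distributing $\limsup$ over finite sums, invoking Proposition \ref{H proposition 2}) are straightforward; the delicate point is controlling the ``$F'$'' remainder when $E$ is infinite, which is why the probability measure $m$ (equivalently the Gibbs property) is essential, and then sequencing the three limits $T\to\infty$, $q\to\infty$, $\varepsilon\to0$ in the right order so that the $q$-dependence of $F$ does not cause trouble.
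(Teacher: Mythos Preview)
Your proposal is correct and follows essentially the same route the paper intends: the paper's proof is a one-line reference saying to repeat the $\limsup$ argument of Proposition~\ref{non D-generic tau} with Lemma~\ref{bound for periodic whole} in place of Lemma~\ref{bound for periodic}(v), and that is exactly what you carry out. One minor notational slip: in your parenthetical you write $[\tau_{(j)}\gamma]$, but Lemma~\ref{bound for periodic whole} already gives the second term as $N_{\tau_{(j)}\rho}([F'],T+K)$ directly (the cylinders are $[\gamma]$ for $\gamma\in F'$, not $[\tau_{(j)}\gamma]$), so no rewriting is needed there; this does not affect your estimate.
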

\begin{proof}
This proof is exactly similar to the proof of the previous proposition for limsup and implementing lemma \ref{bound for periodic whole}.
\end{proof}
\begin{proposition}\label{non d-generic open}
For every open set $V \subseteq E_A^{\mathbb{N}}$ we have
$$c_1\dfrac{h(\rho)}{\chi_{\mu}}m(V)\leq\liminf_{T \rightarrow \infty}\dfrac{N_{\rho}(V, T)}{\exp( T)}\leq\limsup_{T \rightarrow \infty}\dfrac{N_{\rho}(V, T)}{\exp( T)}\leq c_1 \dfrac{h(\rho)}{\chi_{\mu}}m(\overline{V})+y_0^{-1} \dfrac{h(\rho)}{\chi_{\mu}},$$
and
$$c_1\dfrac{1}{\chi_{\mu}}\mu(V) \leq \liminf_{T \rightarrow \infty}\dfrac{N_{\text{per}}(V, T)}{\exp( T)}\leq\limsup_{T \rightarrow \infty}\dfrac{N_{\text{per}}(V, T)}{\exp( T)} \leq c_1 \dfrac{1}{\chi_{\mu}}\mu(\overline{V})+y_0^{-1} \dfrac{1}{\chi_{\mu}}.$$
\end{proposition}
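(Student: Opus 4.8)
The plan is to reduce everything to the cylinder estimates of Proposition~\ref{H proposition 2} (for $N_\rho$) and Proposition~\ref{non D-generic tau} (for $N_{\text{per}}$): Proposition~\ref{polish} lets one decompose $V$ into cylinders for the lower bounds, and for the upper bounds one squeezes $V$ between $\overline V$ and the union of all length-$n$ cylinders meeting $\overline V$, then lets $n\to\infty$. Throughout I use that $B\mapsto N_\rho(B,T)$ and $B\mapsto N_{\text{per}}(B,T)$ are monotone in $B$, and that if $B=\bigsqcup_i B_i$ is a disjoint union then $N_\rho(B,T)=\sum_i N_\rho(B_i,T)$ (every admissible $\omega\rho\in B$ lies in exactly one $B_i$), and likewise for $N_{\text{per}}$.

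\textbf{Lower bounds.} By Proposition~\ref{polish}(b) write $V=\bigsqcup_i[\omega_{(i)}]$ as a countable union of mutually disjoint cylinders, so $N_\rho(V,T)=\sum_i N_\rho([\omega_{(i)}],T)$. Dividing by $e^T$, applying Lemma~\ref{liminf} term by term, then the liminf half of Proposition~\ref{H proposition 2} to each single cylinder $[\omega_{(i)}]$, and finally countable additivity of $m$, gives
$$\liminf_{T\to\infty}\frac{N_\rho(V,T)}{e^T}\geq\sum_i c_1\frac{h(\rho)}{\chi_\mu}m([\omega_{(i)}])=c_1\frac{h(\rho)}{\chi_\mu}m(V).$$
The identical computation, using $N_{\text{per}}(V,T)=\sum_i N_{\text{per}}([\omega_{(i)}],T)$ and the liminf half of Proposition~\ref{non D-generic tau}, yields the periodic lower bound with $\mu$ replacing $h(\rho)\,m$.

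\textbf{Upper bound for $N_\rho$.} One cannot decompose $V$ here, since $\limsup$ need not pass through an infinite sum (as noted after Lemma~\ref{liminf}). Instead, for each $n\geq1$ let $W_n$ be the (countable) set of length-$n$ words whose cylinder meets $\overline V$, and set $[W_n]:=\bigcup_{\omega\in W_n}[\omega]$; these are mutually disjoint cylinders of the fixed length $n$, so Proposition~\ref{H proposition 2} applies to $[W_n]$. One checks $[W_{n+1}]\subseteq[W_n]$ and $\bigcap_n[W_n]=\overline V$ (a point lying in every $[W_n]$ is a limit of points of $\overline V$), so $m([W_n])\downarrow m(\overline V)$ by continuity from above of the probability measure $m$. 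Since $V\subseteq\overline V\subseteq[W_n]$ and $N_\rho(\cdot,T)$ is monotone, $N_\rho(V,T)\leq N_\rho([W_n],T)$, hence $\limsup_{T\to\infty}e^{-T}N_\rho(V,T)\leq c_2\frac{h(\rho)}{\chi_\mu}m([W_n])$ for every $n$; letting $n\to\infty$ and using the identity $c_2=c_1+y_0^{-1}$ together with $m(\overline V)\leq1$,
$$\limsup_{T\to\infty}\frac{N_\rho(V,T)}{e^T}\leq c_2\frac{h(\rho)}{\chi_\mu}m(\overline V)=c_1\frac{h(\rho)}{\chi_\mu}m(\overline V)+y_0^{-1}\frac{h(\rho)}{\chi_\mu}m(\overline V)\leq c_1\frac{h(\rho)}{\chi_\mu}m(\overline V)+y_0^{-1}\frac{h(\rho)}{\chi_\mu}.$$

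\textbf{Periodic upper bound and the main obstacle.} The same outside-approximation reduces the periodic upper bound to proving $\limsup_{T\to\infty}e^{-T}N_{\text{per}}([W_n],T)\leq c_2\chi_\mu^{-1}\mu([W_n])$, i.e.\ an analogue of the upper half of Proposition~\ref{non D-generic tau} with the single cylinder $[\tau]$ replaced by an arbitrary bounded-length union of cylinders. This is the one genuinely new point, and the expected obstacle: since $[W_n]$ may consist of infinitely many length-$n$ cylinders, one cannot simply sum Proposition~\ref{non D-generic tau} over them. I would obtain it by re-running the proof of Proposition~\ref{non D-generic tau} with $[\tau]$ replaced by $[W_n]$, which in turn needs the version of Lemma~\ref{bound for periodic}(v) with $[\tau]$ replaced by $[W_n]$; the only extra ingredient is the finitely-many-omitted-words device of Lemma~\ref{bound for periodic whole}, which splits off a finite subfamily $F\subseteq W_n$ and bounds the contribution of the remaining cylinders (those whose first letter is large) by a quantity controlled by the summability of $f$, so that its normalized $\limsup$ becomes arbitrarily small. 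Granting this refinement, letting $n\to\infty$ with $\mu([W_n])\downarrow\mu(\overline V)$ and invoking $c_2=c_1+y_0^{-1}$ and $\mu(\overline V)\leq1$ finishes the periodic case exactly as in the $N_\rho$ case.
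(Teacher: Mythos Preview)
Your lower bounds are exactly the paper's argument. Your upper bounds take a genuinely different route, and the comparison is instructive.

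For the $N_\rho$ upper bound you approximate $\overline V$ from outside by the bounded-length cylinder unions $[W_n]$ and apply Proposition~\ref{H proposition 2} directly to $[W_n]$; this is correct and in fact yields the sharper inequality $\limsup e^{-T}N_\rho(V,T)\le c_2\,h(\rho)\chi_\mu^{-1}m(\overline V)$, which you then relax to the stated bound. The paper instead uses a complement trick: it applies the already-proved liminf inequality to the open set $\overline V^{\,c}$, feeds that into Lemma~\ref{f,g} together with $N_\rho(\overline V^{\,c},T)+N_\rho(V,T)\le N_\rho(T)$, and bounds $\limsup e^{-T}N_\rho(T)$ by $c_2 h(\rho)/\chi_\mu$ via Proposition~\ref{H proposition 2} with $H=E$. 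Subtracting gives $c_2-c_1 m(\overline V^{\,c})=c_1 m(\overline V)+y_0^{-1}$.

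The payoff of the paper's route appears in the periodic case: the same complement trick works verbatim, with Proposition~\ref{sup periodiccc} supplying $\limsup e^{-T}N_{\text{per}}(T)\le c_2/\chi_\mu$. This completely sidesteps the obstacle you identified, namely the need to extend Proposition~\ref{non D-generic tau} from a single cylinder $[\tau]$ to an infinite bounded-length union $[W_n]$. Your proposed fix (re-running the proof with the finite/cofinite splitting device of Lemma~\ref{bound for periodic whole}) is plausible but is real additional work that the paper simply does not need. So: your $N_\rho$ argument is cleaner and sharper, but the paper's complement argument is uniform across both cases and avoids the periodic difficulty entirely.
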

\begin{proof}
We know from proposition \ref{polish} that $V$ can be written as a union of disjoint cylinders, so $V=\cup_i [\tau_{(i)}]$. Therefore using lemma \ref{liminf} and proposition \ref{H proposition 2} with $H=\tau_{(i)}$ one can write
$$\liminf_{T \to \infty}\dfrac{N_{\rho}(V, T)}{\exp( T)}=\liminf_{T \to \infty}\sum_{i}\dfrac{N_{\rho}([\tau_{(i)}], T)}{\exp( T)}\geq\sum_{i} \liminf_{T \to \infty}\dfrac{N_{\rho}([\tau_{(i)}], T)}{\exp( T)}$$
$$\geq \sum_{i}c_1\dfrac{h(\rho)}{\chi_{\mu}}m([\tau_{(i)}])=c_1\dfrac{h(\rho)}{\chi_{\mu}}m(V).$$
For the limsup we use lemma \ref{f,g} and the above inequality for the open set $\overline{V}^{c}$ to find
$$c_1\dfrac{h(\rho)}{\chi_{\mu}}m(\overline{V}^{c})+\limsup_{T \to \infty}\dfrac{N_{\rho}(V, T)}{\exp( T)}$$
$$\leq \liminf_{T \to \infty}\dfrac{N_{\rho}(\overline{V}^{c}, T)}{\exp( T)}+\limsup_{T \to \infty}\dfrac{N_{\rho}(V, T)}{\exp( T)}$$
$$\leq \limsup_{T \to \infty}\dfrac{N_{\rho}(\overline{V}^{c}, T)+N_{\rho}(V, T)}{\exp( T)}\leq \limsup_{T \to \infty}\dfrac{N_{\rho}(T)}{\exp( T)} \leq c_2\dfrac{h(\rho)}{\chi_{\mu}},$$
where the last inequality holds if we apply proposition \ref{H proposition 2} for  $H=E$ (all the alphabets). This yields 
$$\limsup_{T \to \infty}\dfrac{N_{\rho}(V, T)}{\exp( T)} \leq c_2\dfrac{h(\rho)}{\chi_{\mu}}-c_1 \dfrac{h(\rho)}{\chi_{\mu}}m(\overline{V}^c)=c_1 \dfrac{h(\rho)}{\chi_{\mu}}m(\overline{V})+y_0^{-1} \dfrac{h(\rho)}{\chi_{\mu}}.$$
For counting periodic words, the idea is similar. Again we implement lemma \ref{liminf} and this time proposition \ref{non D-generic tau} to obtain:
$$\liminf_{T \to \infty}\dfrac{N_{\text{per}}(V, T)}{\exp( T)}=\liminf_{T \to \infty}\sum_{i}\dfrac{N_{\text{per}}([\tau_{(i)}], T)}{\exp( T)}\geq\sum_{i} \liminf_{T \to \infty}\dfrac{N_{\text{per}}([\tau_{(i)}], T)}{\exp( T)}$$
$$\geq \sum_{i}c_1\dfrac{1}{\chi_{\mu}}\mu([\tau_{(i)}])=c_1\dfrac{1}{\chi_{\mu}}\mu(V).$$
Applying lemma \ref{f,g} and the above inequality for the open set $\overline{V}^{c}$ gives us:
$$c_1\dfrac{1}{\chi_{\mu}}\mu(\overline{V}^{c})+\limsup_{T \to \infty}\dfrac{N_{\text{per}}(V, T)}{\exp( T)}$$
$$\leq \liminf_{T \to \infty}\dfrac{N_{\text{per}}(\overline{V}^{c}, T)}{\exp( T)}+\limsup_{T \to \infty}\dfrac{N_{\text{per}}(V, T)}{\exp( T)}$$
$$\leq \limsup_{T \to \infty}\dfrac{N_{\text{per}}(\overline{V}^{c}, T)+N_{\text{per}}(V, T)}{\exp( T)}\leq \limsup_{T \to \infty}\dfrac{N_{\text{per}}(T)}{\exp( T)} \leq c_2\dfrac{1}{\chi_{\mu}},$$
where the last inequality is due to the above proposition. This eventually gives 
$$\limsup_{T \to \infty}\dfrac{N_{\text{per}}(V, T)}{\exp( T)} \leq c_2\dfrac{1}{\chi_{\mu}}-c_1 \dfrac{1}{\chi_{\mu}}\mu(\overline{V}^c)=c_1 \dfrac{1}{\chi_{\mu}}\mu(\overline{V})+y_0^{-1} \dfrac{1}{\chi_{\mu}}.$$
\end{proof}
\begin{proposition}\label{borel non d-generic}
For every Borel set $B \subseteq E_A^{\mathbb{N}}$ we have
$$c_1\dfrac{h(\rho)}{\chi_{\mu}}m(B^o)\leq\liminf_{T \rightarrow \infty}\dfrac{N_{\rho}(B, T)}{\exp( T)}\leq\limsup_{T \rightarrow \infty}\dfrac{N_{\rho}(B, T)}{\exp( T)}\leq c_1 \dfrac{h(\rho)}{\chi_{\mu}}m(\overline{B})+y_0^{-1} \dfrac{h(\rho)}{\chi_{\mu}},$$
and
$$c_1 \dfrac{1}{\chi_{\mu}}\mu(B^o)\leq \liminf_{T \rightarrow \infty}\dfrac{N_{per}(B, T)}{\exp( T)}\leq\limsup_{T \rightarrow \infty}\dfrac{N_{per}(B, T)}{\exp( T)}\leq c_1 \dfrac{1}{\chi_{\mu}}\mu(\overline{B})+y_0^{-1} \dfrac{1}{\chi_{\mu}}.$$
\end{proposition}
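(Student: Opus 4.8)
The plan is to deduce the Borel case from the open-set case (Proposition~\ref{non d-generic open}) by a monotonicity-plus-regularity argument, using the clopen basis of cylinders from Proposition~\ref{polish}. The elementary input is that $B\mapsto N_\rho(B,T)$ and $B\mapsto N_{\text{per}}(B,T)$ are monotone: if $B_1\subseteq B_2$ then every admissible $\omega$ with $\omega\rho\in B_1$ (resp. every periodic $\omega$ with $\overline\omega\in B_1$) also lands in $B_2$, so $N_\rho(B_1,T)\le N_\rho(B_2,T)$ and $N_{\text{per}}(B_1,T)\le N_{\text{per}}(B_2,T)$.

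For the lower bounds I would simply note that $B^o$ is open and $B^o\subseteq B$, so monotonicity and the first (resp. periodic) inequality of Proposition~\ref{non d-generic open} applied to $V=B^o$ give
$$\liminf_{T\to\infty}\frac{N_\rho(B,T)}{\exp(T)}\ge\liminf_{T\to\infty}\frac{N_\rho(B^o,T)}{\exp(T)}\ge c_1\frac{h(\rho)}{\chi_\mu}m(B^o),$$
and likewise $\liminf_{T\to\infty} N_{\text{per}}(B,T)/\exp(T)\ge c_1\mu(B^o)/\chi_\mu$.

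For the upper bounds I would approximate the closed set $\overline B$ from outside by clopen sets. By Proposition~\ref{polish}(b) the open set $(\overline B)^c$ is a countable disjoint union $\bigcup_{j\ge1}[\sigma_{(j)}]$ of cylinders, so $\overline B=\bigcap_{j\ge1}[\sigma_{(j)}]^c$. Put $W_n:=\bigcap_{j=1}^n[\sigma_{(j)}]^c$; each $W_n$ is a finite intersection of clopen sets, hence clopen, so it is open with $\overline{W_n}=W_n$, and $W_n\downarrow\overline B$ with $B\subseteq\overline B\subseteq W_n$. Monotonicity and Proposition~\ref{non d-generic open} applied to the open set $W_n$ then yield
$$\limsup_{T\to\infty}\frac{N_\rho(B,T)}{\exp(T)}\le\limsup_{T\to\infty}\frac{N_\rho(W_n,T)}{\exp(T)}\le c_1\frac{h(\rho)}{\chi_\mu}m(\overline{W_n})+y_0^{-1}\frac{h(\rho)}{\chi_\mu}=c_1\frac{h(\rho)}{\chi_\mu}m(W_n)+y_0^{-1}\frac{h(\rho)}{\chi_\mu}.$$
Since $m$ is a probability measure and $W_n\downarrow\overline B$, continuity from above gives $m(W_n)\to m(\overline B)$; letting $n\to\infty$ gives the stated upper bound. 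The periodic statement is identical, with $\mu$ replacing $m$, using the periodic half of Proposition~\ref{non d-generic open} together with $\mu(W_n)\to\mu(\overline B)$.

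There is no serious obstacle here; the only point requiring care is that one cannot sandwich $B$ between open sets whose \emph{closures} shrink down to $\overline B$ in a general metric space, so the argument genuinely needs the totally disconnected structure of $E_A^{\mathbb N}$ — this is exactly where Proposition~\ref{polish} enters, ensuring $\overline{W_n}=W_n$ and hence that the error term $c_1 h(\rho)m(\overline{W_n})/\chi_\mu$ is controlled by $m(\overline B)$ in the limit.
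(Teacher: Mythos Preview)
Your proof is correct. The lower bound argument is identical to the paper's. For the upper bound, however, you take a different route: you approximate $\overline B$ from outside by clopen sets $W_n$, exploit that $\overline{W_n}=W_n$, and then pass to the limit via continuity of the probability measure from above. The paper instead reruns the complement trick from the proof of Proposition~\ref{non d-generic open}: it applies the \emph{lower} bound to the open set $\overline B^{\,c}$, combines it with the global bound $\limsup_T N_\rho(T)/\exp(T)\le c_2\,h(\rho)/\chi_\mu$ via Lemma~\ref{f,g}, and then simplifies using $c_2-c_1=y_0^{-1}$. Your approach is a bit more direct and avoids re-invoking Lemma~\ref{f,g}, at the cost of leaning explicitly on the totally disconnected structure from Proposition~\ref{polish}; the paper's argument would go through verbatim in any setting where the open-set bounds and the global $c_2$ bound are available, without needing clopen approximants.
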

\begin{proof}
We only prove the first line of inequalities. The other one is proved in a similar manner. We apply the above proposition to  open set $B^o$:
$$c_1\dfrac{h(\rho)}{\chi_{\mu}}m(B^o)\leq \liminf_{T \rightarrow \infty}\dfrac{N_{\rho}(B^o, T)}{\exp( T)}\leq \liminf_{T \rightarrow \infty}\dfrac{N_{\rho}(B, T)}{\exp( T)}.$$
For limsup we use lemma \ref{f,g} and this inequality for $\overline{B}^{c}$:
$$c_1\dfrac{h(\rho)}{\chi_{\mu}}m(\overline{B}^{c})+ \limsup_{T \rightarrow \infty}\dfrac{N_{\rho}(\overline{B}, T)}{\exp( T)}$$
$$\leq \liminf_{T \rightarrow \infty}\dfrac{N_{\rho}(\overline{B}^{c}, T)}{\exp( T)}+\limsup_{T \rightarrow \infty}\dfrac{N_{\rho}(\overline{B}, T)}{\exp( T)}\leq \limsup_{T \rightarrow \infty}\dfrac{N_{\rho}( T)}{\exp( T)}\leq c_2\dfrac{h(\rho)}{\chi_{\mu}},$$
where the last inequality holds if we apply proposition \ref{H proposition 2} for  $H=E$ (all the alphabets). Thus
$$\limsup_{T \rightarrow \infty}\dfrac{N_{\rho}(B, T)}{\exp( T)} \leq \limsup_{T \rightarrow \infty}\dfrac{N_{\rho}(\overline{B}, T)}{\exp( T)} \leq c_2\dfrac{h(\rho)}{\chi_{\mu}}- c_1\dfrac{h(\rho)}{\chi_{\mu}}m(\overline{B}^c)=c_1 \dfrac{h(\rho)}{\chi_{\mu}}m(\overline{B})+y_0^{-1} \dfrac{h(\rho)}{\chi_{\mu}}.$$
This finishes the proof.
\end{proof}
Note that so far we focused on the systems with $P(1)=0.$ We want to show that this is not restrictive and we can otherwise get the corresponding counting formula as well. For a general H\"{o}lder-type function $f: E_A^{\mathbb{N}} \to \mathbb{R}$ we remember that $x \in \Gamma$ iff $xf$ is summable. Assuming strong regularity we know there exists $\delta>0$ such that $P(\delta)=0$ and $\inf \Gamma <\delta$. Now if we consider a new function $g=\delta f$, first it is clear that $g$ as well is strongly regular. Secondly, since $P(xg)=P(x\delta f)$ we have $P_{g}(1)=0$. Therefore all the results obtained above are applicable for $g$. Additionally, note that 
$S_ng(\rho)=\delta S_nf(\rho),$
so we find that
\begin{equation}\label{N_g N_f}
N^g(\delta T)=N(T).
\end{equation}
Moreover, it is clear 
$$\mathcal{L}_{1g}=\mathcal{L}_{\delta f}.$$
Therefore if $\mathcal{L}_{\delta f}h_{\delta}=h_{\delta}$ then $\mathcal{L}_{1g}h_{\delta}=h_{\delta}$, similarly if $\mathcal{L}_{\delta f}^*m_{\delta}=m_{\delta}$ then $\mathcal{L}_{1 g}^*m_{\delta}=m_{\delta}$. Additionally, if $\mathcal{L}_{s f}$ avoids  $\exp\left(P(\delta f)\right)=1$ as eigenvalue on
$$\{\delta+iy : 0<|y|<y_0(f)\},$$
then $\mathcal{L}_{sg}$ does so on
$$\{1+iy : 0<|y|<\frac{y_0(f)}{\delta}\}.$$
This implies $y_0(g)=\frac{y_0(f)}{\delta}$ and so 
$$c_1(g)=(\frac{y_0}{\delta})^{-1}\left(\exp((\frac{y_0}{\delta})^{-1})-1\right)^{-1}.$$
It is now enough to use proposition \ref{borel non d-generic} for $g$ with $m=m_{\delta}$, $\mu=\mu_{\delta}$ and $h=h_{\delta}$ to estimate:
$$c_1(g) \dfrac{h(\rho)}{\chi_{\mu}}m(B^o)\leq\liminf_{T \rightarrow \infty}\dfrac{N_{\rho}^g(B, T)}{\exp( T)}\leq\limsup_{T \rightarrow \infty}\dfrac{N_{\rho}^g(B, T)}{\exp( T)}\leq c_1(g) \dfrac{h(\rho)}{ \chi_{\mu}}m(\overline{B})+(\frac{y_0}{\delta})^{-1} \dfrac{h(\rho)}{ \chi_{\mu}}.$$
Furthermore, note that
$$\chi_{\mu}=-\int g \text{ d}\mu_{\delta}=-\delta \int f \text{ d}\mu_{\delta}=\delta \chi_{\mu_{\delta}}.$$
Now we replace $T$ with $\delta T$ and use \ref{N_g N_f} to obtain the following estimate for $f$:
$$y_0^{-1}\left(\exp(\delta y_0^{-1})-1\right)^{-1} \dfrac{h_{\delta}(\rho)}{ \chi_{\mu_{\delta}}}m_{\delta}(B^o)\leq\liminf_{T \rightarrow \infty}\dfrac{N_{\rho}(B, T)}{\exp(\delta T)}$$
$$\leq\limsup_{T \rightarrow \infty}\dfrac{N_{\rho}(B, T)}{\exp(\delta T)}\leq y_0^{-1}\left(\exp(\delta y_0^{-1})-1\right)^{-1} \dfrac{h_{\delta}(\rho)}{ \chi_{\mu_{\delta}}}m_{\delta}(\overline{B})+y_0^{-1} \dfrac{h_{\delta}(\rho)}{ \chi_{\mu_{\delta}}}.$$
Similarly, we can obtain a formula for $N_{\text{per}}(B,T)$ which we omit its proof. We set
\begin{equation}\label{c-delta}
c_{\delta}:=y_0^{-1}\left(\exp(\delta y_0^{-1})-1\right)^{-1}   
\end{equation}
and capture all the aforementioned arguments in the following theorem.
\begin{theorem}\label{main thm}
If $f: E_A^{\mathbb{N}} \to \mathbb{R}$ is strongly regular H\"{o}lder-type function with $P(\delta f)=0$, for every Borel set $B \subseteq E_A^{\mathbb{N}}$ and $\rho \in E_A^{\mathbb{N}}$ we have
$$c_{\delta}\dfrac{h_{\delta}(\rho)}{ \chi_{\mu_{\delta}}}m_{\delta}(B^o)\leq\liminf_{T \rightarrow \infty}\dfrac{N_{\rho}(B, T)}{\exp(\delta T)}\leq\limsup_{T \rightarrow \infty}\dfrac{N_{\rho}(B, T)}{\exp( \delta T)}\leq c_{\delta} \dfrac{h_{\delta}(\rho)}{ \chi_{\mu_{\delta}}}m_{\delta}(\overline{B})+y_0^{-1} \dfrac{h_{\delta}(\rho)}{ \chi_{\mu_{\delta}}},$$
and
$$c_{\delta}\frac{1}{\ \chi_{\mu_{\delta}}}\mu(B^o)\leq \liminf_{T \rightarrow \infty}\dfrac{N_{per}(B, T)}{\exp( \delta T)}\leq\limsup_{T \rightarrow \infty}\dfrac{N_{per}(B, T)}{\exp( \delta T)}\leq c_{\delta} \dfrac{1}{ \chi_{\mu_{\delta}}}\mu(\overline{B})+y_0^{-1} \dfrac{1}{ \chi_{\mu_{\delta}}}.$$
\end{theorem}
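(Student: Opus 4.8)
The plan is to reduce the statement to Proposition \ref{borel non d-generic}, which is precisely the case $P(1)=0$, via the rescaling already outlined in the paragraphs preceding the theorem. Put $g:=\delta f$. Since $P(xg)=P(x\delta f)$ for every $x\in\Gamma$ and $P(\delta f)=0$, we have $P_g(1)=0$; strong regularity of $f$ (namely $\inf\Gamma<\delta$ together with $0<P(x)<\infty$ for some $x$) passes to strong regularity of $g$ with root $1$, and $g$ is Hölder-type and summable ($1\cdot g=\delta f$ being summable is exactly $\delta\in\Gamma$). Hence all of the results of Sections 3 and 4 apply verbatim to $g$.

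Next I would assemble the dictionary between the $g$-objects and the $f$-objects. From $\mathcal{L}_{1g}=\mathcal{L}_{\delta f}$ it follows that the normalized eigendata of $g$ are $h_\delta$, $m_\delta$ and the equilibrium state $\mu_\delta$, and that the $g$-Lyapunov exponent is $-\int g\,\mathrm{d}\mu_\delta=-\delta\int f\,\mathrm{d}\mu_\delta=\delta\,\chi_{\mu_\delta}$. On the spectral side $\mathcal{L}_{sg}=\mathcal{L}_{s\delta f}$, so the segment $\{\delta+iy:0<|y|<y_1\}$ on which $\mathcal{L}_{s\delta f}$ avoids the eigenvalue $1$ off the exponent (Proposition \ref{residue}) corresponds to $\{1+iy':0<|y'|<y_1/\delta\}$ for $g$; hence $y_0(g)=y_0/\delta$, and writing $c_1(\cdot)$ for the Graham--Vaaler lower constant, $c_1(g)=\delta y_0^{-1}\bigl(\exp(\delta y_0^{-1})-1\bigr)^{-1}$, with the analogous upper constant. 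Finally $S_n g=\delta S_n f$ gives the identity $N_\rho^g(B,\delta T)=N_\rho(B,T)$ (and the same for $N_{\mathrm{per}}$), which is \ref{N_g N_f}.

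With this dictionary I would apply Proposition \ref{borel non d-generic} to $g$, obtaining
\[
c_1(g)\frac{h_\delta(\rho)}{\delta\chi_{\mu_\delta}}m_\delta(B^o)\leq\liminf_{T\to\infty}\frac{N_\rho^g(B,T)}{\exp(T)}\leq\limsup_{T\to\infty}\frac{N_\rho^g(B,T)}{\exp(T)}\leq c_1(g)\frac{h_\delta(\rho)}{\delta\chi_{\mu_\delta}}m_\delta(\overline{B})+\delta y_0^{-1}\frac{h_\delta(\rho)}{\delta\chi_{\mu_\delta}},
\]
then substitute $T\mapsto\delta T$, replace $N_\rho^g(B,\delta T)$ by $N_\rho(B,T)$, and cancel the factor $\delta$ shared between $c_1(g)$ and $\delta\chi_{\mu_\delta}$. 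The lower constant becomes $y_0^{-1}(\exp(\delta y_0^{-1})-1)^{-1}=c_\delta$ as in \ref{c-delta}, the denominator becomes $\chi_{\mu_\delta}$, and the remainder term becomes $y_0^{-1}h_\delta(\rho)/\chi_{\mu_\delta}$; this is exactly the first displayed chain of the theorem. The periodic-orbit inequalities follow in the same way from the second line of Proposition \ref{borel non d-generic} (using $\mu=\mu_\delta$ there).

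The step requiring the most care is the transfer of the spectral input under the rescaling: one must verify that the eigenvalue branch $\lambda_1(s)$ carrying the leading eigenvalue of $\mathcal{L}_{\delta f}$, its simplicity, the identification $\mathcal{P}_{1,1}(\cdot)=h\int(\cdot)\,\mathrm{d}m$, and the derivative $\lim_{x\to1}(1-\lambda_1(x))/(x-1)=\chi$ from the proof of Proposition \ref{residue} all remain valid for $g$ with the scaled data, so that the residue $A=h(\rho)m([H])/\chi$ is reproduced with the correct $g$-quantities and the relaxed window $\{\delta+iy:|y|<y_1\}$ maps cleanly to the corresponding window for $g$. Once this is granted, everything else is routine bookkeeping around the substitution $T\mapsto\delta T$.
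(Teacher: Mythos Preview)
Your proposal is correct and follows essentially the same approach as the paper: set $g=\delta f$, verify that $g$ is strongly regular with $P_g(1)=0$, build the dictionary $(h_\delta,m_\delta,\mu_\delta,\;\chi_\mu^g=\delta\chi_{\mu_\delta},\;y_0(g)=y_0/\delta)$, apply Proposition~\ref{borel non d-generic} to $g$, and then substitute $T\mapsto\delta T$ using \eqref{N_g N_f} to arrive at the constants $c_\delta$ of \eqref{c-delta}. Your final paragraph on the transfer of the spectral input is a bit more careful than the paper, which simply records these identifications as evident from $\mathcal{L}_{sg}=\mathcal{L}_{s\delta f}$.
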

\begin{remark}
It is important to note that
\begin{itemize}
    \item For $N_{\rho}$ the eigenmeasure $m$ and for $N_{per}$ the equilibrium measure $\mu$ appears in the formula.
    \item The bounds are sharp as shown in example \ref{sharpness} below.
    \item The limit points of the ratio $\frac{N_{\rho}(B,T)}{\exp(\delta T)}$ can be a full closed interval, i.e.
    $$\left\{A \; : \; A=\lim_{n \to \infty}\frac{N_{\rho}(B,T_n)}{\exp(\delta T_n)}, \; \; T_n \to \infty \; \; \text{as}\; \; n \to \infty \right\}=[c,C],$$
    for some $c,C>0$. (see example \ref{sharpness})
\end{itemize}

\end{remark}
\begin{corollary}\label{border 0 non-degeneric}
If $f: E_A^{\mathbb{N}} \to \mathbb{R}$ is strongly regular H\"{o}lder-type function with $P(\delta f)=0$, for every Borel set $B \subseteq E_A^{\mathbb{N}}$ with boundary of measure $0$ and $\rho \in E_A^{\mathbb{N}}$ we have
$$c_{\delta}\dfrac{h_{\delta}(\rho)}{\chi_{\mu}}m(B)\leq \liminf_{T \rightarrow \infty}\dfrac{N_{\rho}(B,  T)}{\exp(\delta T)}\leq \limsup_{T \rightarrow \infty}\dfrac{N_{\rho}(B, T)}{\exp(\delta T)}\leq c_{\delta} \dfrac{h_{\delta}(\rho)}{\chi_{\mu}}m(B)+y_0^{-1} \dfrac{h_{\delta}(\rho)}{\chi_{\mu}},$$
and 
$$c_{\delta}\dfrac{1}{\chi_{\mu}}\mu(B)\leq \liminf_{T \rightarrow \infty}\dfrac{N_{\text{per}}(B, T)}{\exp(\delta T)}\leq\limsup_{T \rightarrow \infty}\dfrac{N_{\text{per}}(B, T)}{\exp(\delta T)}\leq c_{\delta} \dfrac{1}{\chi_{\mu}}\mu(B)+y_0^{-1} \dfrac{1}{\chi_{\mu}}.$$
\end{corollary}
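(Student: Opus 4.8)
The plan is to deduce this immediately from Theorem \ref{main thm} by observing that the hypothesis collapses the interior, the set, and the closure to the same measure. First I would record the purely set-theoretic chain $B^o \subseteq B \subseteq \overline{B}$ together with the decomposition $\overline{B} = B^o \sqcup \partial B$. Monotonicity of $m_\delta$ then gives
$$m_\delta(B^o) \leq m_\delta(B) \leq m_\delta(\overline{B}) = m_\delta(B^o) + m_\delta(\partial B),$$
so that $m_\delta(\partial B) = 0$ forces $m_\delta(B^o) = m_\delta(B) = m_\delta(\overline{B})$. The identical computation with $\mu_\delta$ in place of $m_\delta$ yields $\mu_\delta(B^o) = \mu_\delta(B) = \mu_\delta(\overline{B})$.

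Next I would simply substitute these equalities into the two displayed chains of inequalities furnished by Theorem \ref{main thm}. In the first chain the left end $c_\delta \tfrac{h_\delta(\rho)}{\chi_{\mu_\delta}} m_\delta(B^o)$ becomes $c_\delta \tfrac{h_\delta(\rho)}{\chi_{\mu_\delta}} m_\delta(B)$ and the right end $c_\delta \tfrac{h_\delta(\rho)}{\chi_{\mu_\delta}} m_\delta(\overline{B}) + y_0^{-1}\tfrac{h_\delta(\rho)}{\chi_{\mu_\delta}}$ becomes $c_\delta \tfrac{h_\delta(\rho)}{\chi_{\mu_\delta}} m_\delta(B) + y_0^{-1}\tfrac{h_\delta(\rho)}{\chi_{\mu_\delta}}$, which is precisely the first displayed inequality of the corollary; the same replacement of $\mu_\delta(B^o)$ and $\mu_\delta(\overline{B})$ by $\mu_\delta(B)$ in the periodic chain gives the second. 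Nothing else is needed: the analytic content is entirely contained in Theorem \ref{main thm}, and this corollary only records the clean two-sided estimate valid for measure-zero-boundary sets.

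The only point deserving a word of care — and it is not really an obstacle — is the meaning of ``boundary of measure $0$'', since the $N_\rho$ estimate is phrased with $m_\delta$ while the $N_{\text{per}}$ estimate is phrased with $\mu_\delta$. Here I would invoke the relation $\mu_\delta(\,\cdot\,) = \int_{(\cdot)} h_\delta \, \mathrm{d}m_\delta$ established in the proof of Lemma \ref{constant modulus} (and used in Lemma \ref{equivalent measure}) together with the two-sided control of the eigenfunction from Lemma \ref{epsilon bounded below} and its boundedness: these make $m_\delta$ and $\mu_\delta$ mutually absolutely continuous, so $m_\delta(\partial B) = 0$ if and only if $\mu_\delta(\partial B) = 0$. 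Hence the single hypothesis suffices to collapse both displayed chains simultaneously, and the corollary follows.
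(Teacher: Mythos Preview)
Your proposal is correct and follows exactly the paper's own approach: apply Theorem \ref{main thm} and use $m(\partial B)=0$ to collapse $m(B^o)=m(B)=m(\overline{B})$. Your final paragraph on the mutual absolute continuity of $m_\delta$ and $\mu_\delta$ is a welcome clarification that the paper leaves implicit.
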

\begin{proof}
We just need to apply the above theorem and note that $m(\partial B)=0$ implies $m(B)=m(\overline{B})=m(B^o).$
\end{proof}
\begin{corollary}[Pollicott-Urba\'{n}ski]\label{llll}
Let $\mathcal{S}=\{ \phi_e \}_{e \in E}$ be a strongly regular conformal graph directed Markov system with D-generic property. Let $\delta$ be the Hausdorff dimension of the limit set of $\mathcal{S}$, then for every Borel set $B \subseteq E_A^{\mathbb{N}}$ with boundary of measure $0$ and $\rho \in E_A^{\mathbb{N}}$ we have
$$\lim_{T \rightarrow \infty}\dfrac{N_{\rho}(B, T)}{\exp( \delta T)}=\dfrac{h_{\delta}(\rho)}{\delta \chi_{\mu_{\delta}}}m_{\delta}(B),$$
and 
$$\lim_{T \rightarrow \infty}\dfrac{N_{\text{per}}(B, T)}{\exp(\delta T)}=\dfrac{1}{\delta \chi_{\mu_{\delta}}}\mu_{\delta}(B).$$
\end{corollary}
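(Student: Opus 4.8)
The plan is to deduce the corollary from Theorem \ref{main thm} by cashing in the extra rigidity supplied by the D-generic hypothesis. First I would take $f$ to be the potential attached to $\mathcal{S}$ below Definition \ref{conformal condition}, namely $f(\rho)=\log|\phi_{\rho_1}'(\pi(\sigma\rho))|$. The Bounded Distortion Property (part (d) of Definition \ref{conformal condition}) makes $f$ H\"older-type continuous, and by the standard Mauldin-Urba\'nski (Bowen-type) formula \cite{MU} the Hausdorff dimension $\delta$ of the limit set $J$ is the unique zero of $t\mapsto P(tf)$, so $P(\delta f)=0$; strong regularity of $\mathcal{S}$ moreover makes $\delta f$ summable and strongly regular. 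Hence Theorem \ref{main thm} applies verbatim with this $\delta$.

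Next I would invoke the D-generic clause of Proposition \ref{residue}: when $f$ is D-generic the analytic continuation of $\eta_\rho([H],s)$ and $\eta_\rho(H,s)$ off the pole at $s=1$ is valid on the \emph{whole} critical line $\operatorname{Re}(s)=1$, i.e. the threshold $y_1$, hence $y_0=y_1/2\pi$, may be taken arbitrarily large; in particular the chain of inequalities in Theorem \ref{main thm} holds for every $y_0\in(0,\infty)$, while the $\liminf$ and $\limsup$ in it do not depend on $y_0$. Writing $u=y_0^{-1}$ and letting $u\to0^+$ one has $c_\delta=u\,(e^{\delta u}-1)^{-1}\to 1/\delta$ and $y_0^{-1}=u\to0$; equivalently, by the remark following Theorem \ref{Garahm-Vaaler}, in this regime Graham-Vaaler degenerates to the Ikehara-Wiener Theorem \ref{ikehara-wiener delta} and yields a genuine limit. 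Passing to the limit in the two displays of Theorem \ref{main thm} therefore gives
$$\frac{h_\delta(\rho)}{\delta\,\chi_{\mu_\delta}}\,m_\delta(B^o)\ \le\ \liminf_{T\to\infty}\frac{N_\rho(B,T)}{e^{\delta T}}\ \le\ \limsup_{T\to\infty}\frac{N_\rho(B,T)}{e^{\delta T}}\ \le\ \frac{h_\delta(\rho)}{\delta\,\chi_{\mu_\delta}}\,m_\delta(\overline B),$$
and the analogous two-sided estimate for $N_{\text{per}}(B,T)/e^{\delta T}$ with $m_\delta$ and $h_\delta(\rho)$ replaced by $\mu_\delta$ and $1$.

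Finally, the hypothesis that $\partial B$ is a null set forces $m_\delta(B^o)=m_\delta(\overline B)=m_\delta(B)$, and since $\mu_\delta\ll m_\delta$ (recall $\mu_\delta=\int h_\delta\,dm_\delta$ from the proof of Lemma \ref{constant modulus}) also $\mu_\delta(B^o)=\mu_\delta(\overline B)=\mu_\delta(B)$; the squeeze then collapses both chains into the asserted equalities. The only genuinely substantive inputs are the identity $P(\delta f)=0$ (Bowen's formula, cited from \cite{MU}) and Proposition \ref{residue}, which already did the hard analytic work of continuing the Poincar\'e series past the critical line under D-genericity; the rest is bookkeeping. The one point to be careful about is that $c_\delta$ and the error term $y_0^{-1}$ in Theorem \ref{main thm} are tied to the \emph{a priori} value of $y_0$ for the system, so I must first record that D-genericity decouples them by allowing $y_0$ to be arbitrary, and only then let $y_0\to\infty$.
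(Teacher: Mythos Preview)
Your proof is correct and follows essentially the same route as the paper: invoke the previous corollary (equivalently Theorem \ref{main thm} together with the null-boundary hypothesis), then use D-genericity via Proposition \ref{residue} to send $y_0\to\infty$, whence $c_\delta\to 1/\delta$ and the additive error $y_0^{-1}\to 0$, collapsing the two-sided bounds into the stated limits. Your write-up is in fact more thorough than the paper's one-line proof: you explicitly justify why the potential $f$ of the conformal system satisfies the hypotheses of Theorem \ref{main thm} (H\"older-type via bounded distortion, $P(\delta f)=0$ via Bowen's formula), and you note that $\mu_\delta\ll m_\delta$ so that a null boundary for $m_\delta$ is automatically null for $\mu_\delta$ as well---a point the paper leaves implicit.
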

\begin{proof}
It follows from the previous corollary. Note that when $\mathcal{S}$ is D-generic then we are allowed to let $y_0 \to \infty$ and this gives $c_{\delta} \to \frac{1}{\delta}$ from \ref{c-delta}.
\end{proof}
\section{Asymptotic Formula for Length}
Before bringing some examples we would like to talk about counting with specified length. As indicated at the beginning of the previous section item (d) we had $N_{\rho}([H],q,T)$ which is counting the number of words $\omega$ satisfying $S_{|\omega|}f(\omega \rho)\geq -T$ of length $q$. We addressed in proposition \ref{H proposition 2} that growth of this relative to $\exp(\delta T)$ tends to $0$. Therefore if we would like to obtain fairly interesting growth we have to focus on some counting where $q$ as well grows as $T$ grows. We know $N_{\rho}(T) \sim C \exp(\delta T)$ but if we write 
$$N_{\rho}(T)=\sum_{i=1}^{\infty} N_{\rho}(i,T),$$
first, we should note that this sum is terminating at some point. More precisely, for $\rho$ if we set
$$m(T):= \sup_{\omega \in E_{\rho}^*} \{ |\omega|  :  S_{|\omega'|}f(\omega'\rho)\geq -T, \; \forall \omega' \in E_{\rho}^*, \; |\omega'|\leq |\omega| \}, \hspace{1cm} b_n:=\inf_{\omega \in E_{\rho}^n}  S_n(\omega\rho),$$
$$ M(T):=\sup_{\omega \in E_{\rho}^*} \{ |\omega| \; : \; S_{|\omega|}f(\omega\rho)\geq -T \}, \hspace{4.5cm} d_n:=\sup_{\omega \in E_{\rho}^n}  S_n(\omega\rho),$$
then $N_{\rho}(i,T)=0$ for $i>M(T)$, therefore
$$N_{\rho}(T)=\sum_{i=1}^{M(T)} N_{\rho}(i,T).$$
The question we ask is which term of the above sum on the right-hand side might have growth comparable to the left-hand side, i.e. for which $i(T)$ the growth of $N_{\rho}(T)/N_{\rho}(i(T),T)$ is not too fast?! With the tools we have, we couldn't answer this question, however, we have some words on that. First, we prove the following.
\begin{proposition}\label{asymp length}
Both of the following limits exist:
$$\lim_{T \to \infty}\frac{m(T)}{T}=r, \; \; \; \; \lim_{T \to \infty}\frac{M(T)}{T}=s.$$
\end{proposition}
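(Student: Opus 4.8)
The plan is to show that both $m(T)$ and $M(T)$ are essentially governed by subadditive/superadditive data, so that the existence of the limits follows from Fekete's Lemma \ref{fekete} after a rescaling argument. Recall $b_n = \inf_{\omega \in E_\rho^n} S_n f(\omega\rho)$ and $d_n = \sup_{\omega \in E_\rho^n} S_n f(\omega\rho)$. The first step is to relate $M(T)$ to the sequence $\{d_n\}$: by definition, $M(T)$ is the largest $n$ for which some admissible $\omega$ of length $n$ satisfies $S_n f(\omega\rho) \geq -T$, i.e. $M(T) = \sup\{n : d_n \geq -T\} = \sup\{n : -d_n \leq T\}$. Likewise $m(T) = \sup\{n : -b_k \leq T \text{ for all } k \leq n\} = \sup\{n : \max_{k\leq n}(-b_k) \leq T\}$. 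So I would set $u_n := -d_n$ and $v_n := \max_{k \leq n}(-b_k)$; then $M(T)$ and $m(T)$ are the "generalized inverses" of the sequences $\{u_n\}$ and $\{v_n\}$ respectively, and the claim $M(T)/T \to s$ amounts to $u_n/n$ converging (to $1/s$), similarly for $v_n/n$.

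Next I would establish the relevant subadditivity. Using the cocycle identity $S_{m+n} f(\omega\tau\rho) = S_m f(\omega\tau\rho) + S_n f(\tau\rho)$ together with Lemma \ref{bounded S_n} (to control the discrepancy between $S_m f(\omega\tau\rho)$ and $S_m f(\omega\hat\tau)$ for an arbitrary admissible continuation, up to the constant $K_f$), and the finite irreducibility of the shift (to connect an arbitrary word of length $m$ to an arbitrary word of length $n$ through a bounded bridging word in $\Omega$, of length at most $L := \max_{\omega\in\Omega}|\omega|$), one gets that $d_{m+n}$ is comparable to $d_m + d_n$ up to an additive constant and a bounded shift in length. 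Concretely, $d_{m+n+|\gamma|} \geq d_m + d_n - 2K_f$ for a suitable bridging word $\gamma$, and $d_{m+n} \leq d_m + d_n + K_f$ directly from the cocycle identity. Hence $\{-d_n + c\}$ is subadditive for an appropriate constant $c$ (after absorbing the bounded length defect, e.g. by passing to the sequence along multiples of $L+1$ first and then filling in), and Fekete's Lemma \ref{fekete} gives $\lim_n (-d_n)/n =: \ell_M$. Note $\ell_M \in (0,\infty)$: it is positive since $f < 0$ in the strongly regular regime forces $d_n \to -\infty$ linearly (this can be seen from the pressure being finite and $P(1)=0$ with $\chi_\mu > 0$, so $d_n \approx -n\chi_{\mu_{\min}}$ up to the Gibbs bounds), and finite because $d_n \geq S_n f(\omega\rho)$ for a fixed $\omega$ gives a linear lower bound. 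The same argument applied to $b_n$, and then to $v_n = \max_{k\le n}(-b_k)$ (which is monotone, so its growth rate equals $\lim_n (-b_n)/n$ once one checks $-b_n$ itself has a limiting rate — again via Fekete after noticing $-b_n$ is superadditive up to constants, by the dual estimate), yields $\lim_n v_n/n =: \ell_m$.

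Finally I would transfer the sequence limits to the $T$-limits. Since $M(T) = \sup\{n : -d_n \leq T\}$ and $(-d_n)/n \to \ell_M$, for any $\varepsilon > 0$ we have $-d_n \leq (\ell_M+\varepsilon)n$ for large $n$, so $n \leq M(T)$ whenever $(\ell_M+\varepsilon)n \leq T$, giving $\liminf M(T)/T \geq 1/(\ell_M+\varepsilon)$; and $-d_n \geq (\ell_M-\varepsilon)n$ for large $n$ forces $M(T) \leq T/(\ell_M-\varepsilon) + O(1)$, giving $\limsup M(T)/T \leq 1/(\ell_M-\varepsilon)$. Letting $\varepsilon \to 0$ gives $\lim_T M(T)/T = 1/\ell_M =: s$, and identically $\lim_T m(T)/T = 1/\ell_m =: r$. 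The main obstacle is the bookkeeping in the second step: the bridging words from finite irreducibility shift lengths by a bounded but nonconstant amount, so $\{-d_n\}$ is not literally subadditive, and one must argue carefully — e.g. restrict to an arithmetic progression of lengths, apply Fekete there, and then use the uniform (in length) bound on $d_{n+j} - d_n$ for $0 \le j \le L+1$ coming from summability of $f$ and Lemma \ref{bounded S_n} to conclude the full sequence has the same rate. A secondary subtlety is checking $\ell_m, \ell_M \in (0,\infty)$ rigorously; this should follow cleanly from the Gibbs estimate \ref{Gibbs} with $x=\delta$, which pins $S_n f(\omega\rho)$ between $-n\,\delta^{-1}P(\delta)\,$-type bounds, i.e. between linear functions of $n$ with negative slope.
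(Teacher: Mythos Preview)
Your approach is essentially the paper's: establish that $d_n/n$ and $b_n/n$ converge via Fekete's Lemma (using finite irreducibility, Lemma \ref{bounded S_n}, and the Gibbs bound \ref{Gibbs} to obtain approximate subadditivity), then transfer to $M(T)/T$ and $m(T)/T$ by the obvious sandwich/inverse argument.

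Two remarks. First, you omit a case split the paper makes for $m(T)$: when $E$ is infinite, summability of $f$ forces $b_1=-\infty$, so $m(T)=\sup\emptyset$ for every $T$ and the limit is declared trivially; your treatment of $b_n$ tacitly assumes $E$ is finite. Second, your ``main obstacle'' (the bridging-word length defect forcing a passage to arithmetic progressions) is handled more directly in the paper: after inserting $\omega\in\Omega$ to make $\tau\omega\rho$ admissible, one splits $S_{m+|\omega|}f(\tau\omega\rho)$ at position $|\omega|$ rather than at position $m$, so that $S_m f\big(\sigma^{|\omega|}(\tau\omega\rho)\big)$ is a Birkhoff sum over a genuine length-$m$ word followed by $\rho$ and hence is $\leq d_m$; the leftover $S_{|\omega|}$-pieces are then bounded uniformly via \ref{Gibbs} since $\Omega$ is finite. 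This yields $d_{m+n}\leq d_m+d_n+C$ outright, with no length defect to repair.
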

\begin{proof}
First, we prove the latter. We set $M:=M(T)$, let $\omega$ be a finite word making the supremum possible in the definition of $M(T)$, then for any $\tau \in E_{\rho}^{M+1}$ we find
$$d_M\geq S_Mf(\omega \rho)\geq -T > S_{M+1}(\tau\rho),$$
$$d_M\geq -T \geq d_{M+1},$$
$$\frac{d_M}{M}\geq \frac{-T}{M} \geq \frac{d_{M+1}}{M+1}\frac{M+1}{M}.$$
Therefore it is enough to show that $d_n/n$ is convergent. To do so, we note that for arbitrary $\tau$, $\gamma$ with $|\tau|=m$, $|\gamma|=n$ where $\tau\gamma\rho$ is admissible, we can find $\omega \in \Omega$ such that $\tau\omega\rho$ is as well admissible by finitely irreducible definition \ref{finitely irreducible}. By lemma \ref{bounded S_n} we find:
$$\delta S_{m+n}f(\tau\gamma\rho)=\delta S_mf(\tau\gamma\rho)+\delta S_nf(\gamma\rho)\leq \delta S_mf(\tau\omega\rho)+\delta S_nf(\gamma\rho)+K_{\delta f}$$ 
$$=\delta S_{m+|\omega|}f(\tau\omega\rho)-\delta S_{|\omega|}f(\omega\rho)+\delta S_nf(\gamma\rho)+K_{\delta f}$$
$$=\delta S_{|\omega|}f(\tau\omega\rho)+\delta S_{m}f(\sigma^{|\omega|}(\tau\omega\rho))-\delta S_{|\omega|}f(\omega\rho)+\delta S_nf(\gamma\rho)+K_{\delta f}$$
Now by \ref{Gibbs} we know $\delta S_{|\omega|}f \leq \log Q_{\delta}$ and since $\Omega$ is finite, there is $C>0$ such that
$$S_{m+n}f(\tau\gamma\rho)\leq S_{m}f(\sigma^{|\omega|}(\tau\omega\rho))+S_nf(\gamma\rho)+C\leq d_m+d_n+C.$$
Thus we have $d_{m+n}\leq d_m+d_n+C$ and we can use Fekete's lemma \ref{fekete} with $a_n=d_n+C$ to get convergence of $d_n/n$.\\
For the other one, note that if $E$ is infinite then using \ref{Gibbs} there are infinitely many $n$ for which $b_n=-\infty$, therefore $m(T)=\sup \emptyset$ which we set it $-\infty$ and so $m(T)/T=-\infty$ for all $T>0$. Let $E$ be finite, for arbitrary $\tau$, $\gamma$ with $|\tau|=m$, $|\gamma|=n$ where $\tau\rho$ and $\gamma\rho$ are admissible there is $\omega \in \Omega$ such that $\tau\omega\gamma\rho$ is admissible as well. Therefore by lemma \ref{bounded S_n} we find:
$$\delta S_mf(\tau\rho)+\delta S_nf(\gamma\rho)\geq \delta S_mf(\tau\omega\gamma\rho)-K_{\delta f}+\delta S_nf(\gamma\rho)$$
$$=\delta S_{m+|\omega|}f(\tau\omega\gamma\rho)-\delta S_{|\omega|}f(\omega\gamma\rho)-K_{\delta f}+\delta S_nf(\gamma\rho)$$
$$=\delta S_{m+|\omega|+n}f(\tau\omega\gamma\rho)-\delta S_{|\omega|}f(\omega\gamma\rho)-K_{\delta f}$$
$$=\delta S_{|\omega|}f(\tau\omega\gamma\rho)+\delta S_{m+n}f(\sigma^{|\omega|}(\tau\omega\gamma\rho))-\delta S_{|\omega|}f(\omega\gamma\rho)-K_{\delta f}.$$
Now for large $m$ it is clear that by lemma \ref{bounded S_n} we have $\delta S_{|\omega|}f(\tau\omega\gamma\rho)\geq \delta S_{|\omega|}f(\tau\rho)-K_{\delta f}$, so again we use \ref{Gibbs} and the fact that $E$ and $\Omega$ are finite to obtain $C>0$ such that:
$$\delta S_mf(\tau\rho)+\delta S_nf(\gamma\rho)\geq \delta S_{m+n}f(\sigma^{|\omega|}(\tau\omega\gamma\rho))-C.$$
This gives $b_m+b_n\geq b_{m+n}-C$, and once again we use Fekete's lemma to find that $b_n/n$ is convergent. Note that similar to above we can set $m:=m(T)$ and let $\omega$ be a finite word making the supremum possible in the definition of $m(T)$, so:
$$\frac{b_m}{m}\geq \frac{-T}{m} \geq \frac{b_{m+1}}{m+1}\frac{m+1}{m}.$$
This finishes the proof.
\end{proof}
Note that $m(T)$ is the cutoff integer where before that the counting problem is just counting $\sum_{i=1}^{m(T)} \# E_A^i$, while after that not all words with generic length are included in $N_{\rho}(T)$. We continue this omitting process till we reach $M(T)$ where no finite word of length bigger is counted anymore. Furthermore, it is obvious that $r\leq s$. We know equality and strict inequality are both possible, examples \ref{sharpness}, \ref{deterministic 1/2, 1/3} correspondingly. Our guess is the following
$$\frac{N_{\rho}(T)}{N_{\rho}(i(T),T)}=O(T) \iff \frac{i(T)}{T}\to \frac{1}{\chi_{\mu_{\delta}}},$$
where $O$ is just the big O notation and $m(T)\leq i(T)\leq M(T)$. As stated, we couldn't show this with the tools we have. Note that this last assumption cannot be relaxed, for taking $i(T)=M(T)+1$ in example \ref{sharpness} gives 
$$\frac{N_{\rho}(T)}{1+N_{\rho}(i(T),T)}=N_{\rho}(T)=O\left(\exp(\delta T)\right), \; \; \frac{i(T)}{T} \to \frac{1}{\chi_{\mu_{\delta}}}.$$
In example \ref{sharpness} we have only one choice $i(T)=m(T)=M(T)$ and then $N_{\rho}(T)/N_{\rho}(i(T),T)=1$. However, computations get much harder for example \ref{deterministic 1/2, 1/3}. Our computations using an asymptotic formula for partial sum of binomials \citep[~ p. 492]{CS} suggest $N_{\rho}(T)/N_{\rho}(i(T),T)=O(T)$. In case, such a relation holds in general, it tells us that the main contributor to $N_{\rho}(T)$ is asymptotically $N_{\rho}(i(T),T)$. This is important because in some cases one needs to deal with words of specified length rather than any length when working with $N_{\rho}(T)$.
\section{Examples}
\begin{example}
Recalling example \ref{alpha_i constant} from the preliminaries section, we apply theorem \ref{main thm}. We know that the transfer operator $\mathcal{L}_s$ for real $s=x$ due to Ruelle's theorem \citep[~ p. 136]{P-RPF}, has only one eigenvalue of modulus $e^{P(x)}$ and this eigenvalue is $e^{P(x)}$. Therefore by equation \ref{Pp} for real $s=x$ the eigenvalue is of the form
$$\lambda(x)=\exp\left(\log r(A) +x\log \alpha\right).$$
and since eigenvalue is an analytic function using the identity theorem we obtain for every $s$
$$\lambda(s)=\exp\left(\log r(A) +s\log \alpha\right).$$
We observe that $\lambda(s)=1$ when 
$$s=\frac{ \log r(A)}{-\log \alpha}+\frac{2 \pi  k}{  -\log \alpha}i,  \; \; \; k \in \mathbb{Z}.$$ 
Therefore $\delta=\log r(A)/-\log \alpha$ and  $\eta_{\rho}-1/(s-\delta)$ has continuous extension on the segment 
$$\{s \in \mathbb{C} : s=\delta+iy,\; \;  |y|< \frac{2 \pi}{-\log \alpha } \}$$
of the critical line. Then  theorem \ref{main thm} for 
$$y_0=\frac{y_1}{2 \pi}=\frac{-1}{\log \alpha},\; \; \delta=\frac{\log r(A)}{-\log \alpha}, \; \; \chi_{\mu_{\delta}}=-\int \log \alpha \text{d}\mu_{\delta}=-\log \alpha$$
gives us the following estimate:
\begin{equation}\label{eqr 7}
\frac{h_{\delta}(\rho)}{r(A)-1} \leq \liminf_T \dfrac{N_{\rho} (T)}{\exp(\delta T)}\leq \limsup_T \dfrac{N_{\rho} (T)}{\exp(\delta T)} \leq \frac{h_{\delta}(\rho)}{r(A)-1}r(A).
\end{equation}
\end{example}
\begin{example}\label{sharpness}
In the previous example if we consider the full shift with two letters and $\alpha=\frac{1}{3}$ with maps
$$\phi_0(x)=\frac{1}{3}x, \; \; \; \phi_1(x)=\frac{1}{3}x+\frac{2}{3},$$
then the limit set of this system is the Cantor set on the unit interval. Therefore
$$f(\rho)=\log |\phi_{\rho_1}'(\pi(\sigma\rho))|=\log \frac{1}{3},$$
$$\mathcal{L}_{s} \mathbb{1}(\rho)=\exp(s f(0 \rho))+\exp(sf(1 \rho)) =2(\frac{1}{3^s}), \; \; h=\mathbb{1}$$
$$\log r(A)=\lim_n \frac{1}{n}\log \# E^n=\lim_n \frac{1}{n}\log 2^n=\log 2.$$
$$  y_0=\frac{1}{\log 3}, \; \; \delta=\frac{\log 2}{\log 3}$$
Thus
\begin{equation}\label{eqr 0}
1 \leq \liminf_T \dfrac{N_{\rho} (T)}{\exp(\delta T)}\leq \limsup_T \dfrac{N_{\rho} (T)}{\exp(\delta T)} \leq  2.
\end{equation}
Now we show that the left and right inequalities in the above line are actually equalities. For this, we need to explicitly compute $N_{\rho}(T)$. If $|\omega|=n$:
$$S_n f(\omega \rho)= n\log \frac{1}{3}$$
$$  S_nf(\omega \rho)\geq -T \iff n\leq \frac{T}{\log 3}. $$
Therefore
$$\frac{N(T)}{\exp(\delta T)}=\frac{2^{\lfloor \frac{T}{\log 3} \rfloor +1}-2}{2^{\frac{T}{\log 3}}}.$$
We dropped the notation $\rho$ in $N_{\rho}(T)$ as it is independent. For any $A \in [0,1]$ it is clear that we can choose a sequence $T_n$ with $T_n \to \infty$ such that $\frac{T_n}{\log 3}-{\lfloor  \frac{T_n}{\log 3}\rfloor} \to A$. Then we obtain:
$$ \dfrac{N (T_n)}{\exp(\delta T_n)} \to 2^{-A+1},
$$
This in particular means
$$\liminf_{T \infty} \frac{N (T)}{\exp(\delta T)}=1, \; \; \; \limsup_{ T \to \infty} \dfrac{N (T)}{\exp(\delta T)}=2.$$
\end{example}
Note that in general computing $N_{\rho}(T)$ is not so easy even for simple systems. The following example is one in that regard.
\begin{example}\label{deterministic 1/2, 1/3}
Recalling example \ref{example alpha different}, consider the deterministic system with conformal maps of the unit interval 
$$\phi_0(x)=\frac{1}{2}x+\frac{1}{20}, \; \; \; \phi_1(x)=\frac{1}{3}x+\frac{1}{30}$$
on the full shift space $E^{\infty}=\{0,1\}^{\infty}$. Clearly, we have
$$f(\rho)=\log |\phi_{\rho_1}'(\sigma \rho)|,$$
$$S_n f(\omega \rho)=n_0 \log \frac{1}{2}+n_1 \log \frac{1}{3},$$ 
where $n_0=n_0(\omega)=S_n\mathbbm{1}_{[0]}(\omega\rho)$ and $n_1=n_1(\omega)=S_n\mathbbm{1}_{[1]}(\omega\rho)$. Basically, $n_0$ is the number of $0$s  and $n_1$ is the number of $1$s in $\omega \in E^n$. The pressure is calculated to be
\begin{align*}
P(x)=\lim_n \frac{1}{n}\log\sum_{|\omega|=n}\|\phi'_{\omega}\|^x&=\lim_n \frac{1}{n}\log\sum_{|\omega|=n}(\frac{1}{2^{n_0(\omega)}}\frac{1}{3^{n_1(\omega)}})^x\\
                                                                                                         &=\lim_n \frac{1}{n} \log (\frac{1}{2^x}+\frac{1}{3^x})^n=\log (\frac{1}{2^x}+\frac{1}{3^x})
\end{align*}
And a Gibbs state by $\ref{eqr 10}$ can be found first on $[\omega_1]$, then on $[\omega_1\omega_2]$ and so on:
$$m_x\left([\omega]\right)=\dfrac{(\frac{1}{2^x})^{n_0}(\frac{1}{3^x})^{n_1}}{(\frac{1}{2^x}+\frac{1}{3^x})^n}, \; \; \; \omega \in E^n.$$
Note that, it defines a system with D-generic property. One way to see that the system is D-generic is by Proposition \ref{prop. d-generic}.
Note that $E_{\text{per}}^*$ is the set of periodic words of any length which is exactly $E^*$, since we work with the full shift. Therefore if the set 
$$\{S_{|\omega|}f(\overline{\omega}) : \omega \in E_{\text{per}}^*\}=\{n_0 \log \frac{1}{2}+n_1 \log \frac{1}{3} : n_0+n_1=n \in \mathbb{N} \},$$
generates a cyclic additive group with a generator $\beta$, then there exist integers $k,k'$ such that $k\beta=\log 1/2$ and  $k'\beta=\log 1/3$. This yields $k/k'=\log 2/\log 3$ is rational. The other way to see that our system has D-generic property is by directly solving the following equation for the eigenvalue of the maximal modulus of the transfer operator:
$$1=\lambda(s)=\frac{1}{2^s}+\frac{1}{3^s}, \; \;  x=\delta$$
$$|\frac{1}{2^s}+\frac{1}{3^s}|=1=\frac{1}{2^{\delta}}+\frac{1}{3^{\delta}}=|\frac{1}{2^s}|+|\frac{1}{3^s}|,$$
so by properties of the triangle inequality, there exists $b\geq 0$ such that
$$\frac{1}{2^s}=b\frac{1}{3^s} \; \Rightarrow \; b=\frac{3^s}{2^s}=\frac{3^{\delta}}{2^{\delta}}\exp(iy\log 3 -iy\log 2) \Rightarrow \; b=\frac{3^{\delta}}{2^{\delta}}, \; \;  y=\frac{2k\pi}{\log(3/2)}.$$
But,
$$1=\frac{1}{2^s}+\frac{1}{3^s}=b\frac{1}{3^s}+\frac{1}{3^s}=b\frac{1}{3^{\delta}}\exp(-iy\log 3)+\frac{1}{3^{\delta}}\exp(-iy\log 3)$$
$$=\frac{1}{2^{\delta}}\exp(-iy\log 3)+\frac{1}{3^{\delta}}\exp(-iy\log 3)=\exp(-iy\log 3) \Rightarrow \; y=\frac{2k\pi}{\log3},$$
i.e. $y$ can only be $0$.
Now we are ready to apply corollary \ref{llll} to find 
$$\frac{N_{\rho}(T)}{\exp(\delta T)} \rightarrow \frac{1}{\delta \chi_{\mu_{\delta}} }, \; \; \; T \to \infty . $$ 
\end{example}
In order to keep this paper short we didn't bring examples with infinite alphabets. One example would be the Apollonian circle packing problem for which there is a good exposition already in \citep[~ ch. 6]{UP}.
\end{large}

\bibliographystyle{alpha}
\bibliography{bibbb.bib}
\end{document}